\title{Mass Transportation on Sub-Riemannian Manifolds}
\author{A.~Figalli\footnote{Universit\'e de Nice-Sophia
    Antipolis, Labo.\ J.-A.\ Dieudonn\'e, UMR 6621, Parc
    Valrose, 06108 Nice Cedex 02, France ({\tt
      figalli@unice.fr})}
      \footnote{Centre de Math\'ematiques Laurent Schwartz, Ecole Polytechnique, 91128 Palaiseau, France ({\tt figalli@math.polytechnique.fr})}
\and
L.~Rifford\footnote{Universit\'e de Nice-Sophia
    Antipolis, Labo.\ J.-A.\ Dieudonn\'e, UMR 6621, Parc
    Valrose, 06108 Nice Cedex 02, France ({\tt
      rifford@unice.fr})}}
\date{}
\numberwithin{equation}{section}
\newtheorem{theorem}{Theorem}[section]
\newtheorem{proposition}[theorem]{Proposition}
\newtheorem{lemma}[theorem]{Lemma}
\theoremstyle{definition}\newtheorem{remark}[theorem]{Remark}
\theoremstyle{definition}\newtheorem{definition}[theorem]{Definition}
\theoremstyle{definition}
\def\R{\textrm{I\kern-0.21emR}}
\def\N{\textrm{I\kern-0.21emN}}
\def\Bad{\mathcal{B}}
\def\Mov{\mathcal{M}}
\def\Stat{\mathcal{S}}
\def\p{\partial}
\def\g{\gamma}
\def\O{\Omega}
\def\e{\varepsilon}
\newcommand{\SPAN}{{\rm Span}}
\newcommand{\supp}{{\rm \operatorname{supp}}}
\newcommand{\Hess}{{\rm \operatorname{Hess\,}}}
\renewcommand{\exp}{{\rm exp}}
\begin{document}

\maketitle

\begin{abstract}
We study the optimal transport problem in sub-Riemannian manifolds
where the cost function is given by the square of the
sub-Riemannian distance. Under appropriate assumptions, we
generalize Brenier-McCann's Theorem proving existence and
uniqueness of the optimal transport map. We show the absolute
continuity property of Wassertein geodesics, and we address the
regularity issue of the optimal map. In particular, we are able to
show its approximate differentiability a.e. in the Heisenberg
group (and under some weak assumptions on the measures the
differentiability a.e.), which allows to write a weak form of the
Monge-Amp\`ere equation.
\end{abstract}

\section{Introduction}

The optimal transport problem can be stated as follows: given two
probability measures $\mu$ and $\nu$, defined on measurable
spaces $X$ and $Y$ respectively, find a measurable map $T:X \rightarrow Y$ with
$$
T_\sharp \mu=\nu \qquad \text{(i.e. $\nu(A)=\mu\left(T^{-1}(A)\right)$ for all $A \subset Y$ measurable)},
$$
and in such a way that $T$ minimizes the transportation cost. This
last condition means
$$
\int_X c(x,T(x))\,d\mu(x) = \min_{S_\sharp \mu=\nu} \left\{ \int_X
c(x,S(x))\,d\mu(x) \right\},
$$
where $c:X \times Y \rightarrow \R$ is some given cost function,
and the minimum is taken over all measurable maps $S:X\to Y$ with
$S_\sharp \mu=\nu$. When the transport condition $T_\sharp
\mu=\nu$ is satisfied, we say that $T$ is a \textit{transport
map}, and if $T$ minimizes also the cost we call it an
\textit{optimal transport map}. Up to now the optimal transport problem has been intensively
studied in a Euclidean or a Riemannian setting by many authors,
and it turns out that the particular choice $c(x,y)=d^2(x,y)$
(here $d$ denotes a Riemannian distance) is suitable for studying
some partial differential equations (like the semi-geostrophic
or porous medium equations), for
studying functional inequalities (like Sobolev and Poincar\'e-type
inequalities) and for applications in geometry (for example, in
the study of lower bound on the Ricci curvature of the manifolds).
We refer to the books \cite{agsbook,villani03, villaniSF} for
an excellent presentation.

After the existence and uniqueness results of Brenier for the
Euclidean case \cite{brenier91} and McCann for the Riemannian case
\cite{mccann01}, people tried to extend the theory in a
sub-Riemannian setting. In \cite{ar04} Ambrosio and Rigot studied
the optimal transport problem in the Heisenberg group, and
recently Agrachev and Lee were able to extend their result to
more general situations such as sub-Riemannian structures corresponding to $2$-generating distributions \cite{al07}.

Two key properties of the optimal transport map result to be
useful for many applications: the first one is the fact that the
transport map is differentiable a.e. (this for example allows to
write the Jacobian of the transport map a.e.), and the second one
is that, if $\mu$ and $\nu$ are absolutely continuous with respect
to the volume measure, so are all the measures belonging to the
(unique) Wasserstein geodesic between them. Both these properties
are true in the Euclidean case (see for example \cite{agsbook}) or
on compact Riemannian manifolds (see \cite{cems01,berbuf}). If the
manifold is noncompact, the second property still remains true
(see \cite[Section 5]{fatfig}), while the first one holds in a
weaker form.  Indeed, although one cannot hope for its
differentiability in the non-compact case, as proved in
\cite[Section 3]{figalliSIAM} (see also \cite{agsbook}) the
transport map is approximately differentiable a.e., and this turns
out to be enough for
extending many results from the compact to the non-compact case. Up to now, the only available results in these directions in a sub-Riemannian setting were proved in \cite{figjul}, where the authors show that the absolute continuity property along Wassertein geodesics holds in the Heisenberg group.\\

The aim of this paper is twofold: on the one hand, we prove
new existence and uniqueness results for the optimal transport map
on sub-Riemannian manifolds. In particular, we show that  the structure of the optimal transport map is more or less the same as in the Riemannian case (see \cite{mccann01}).  On the other hand, in a still large
class of cases, we prove that the transport map is (approximately)
differentiable almost everywhere, and that the absolute continuity
property along Wasserstein geodesics holds. This settles several open problems raised in \cite[Section
7]{ar04}: first of all, regarding problem \cite[Section 7 (a)]{ar04}, we are able to extend the results of
Ambrosio and Rigot \cite{ar04} and of Agrachev and Lee \cite{al07}
to a large class of sub-Riemannian manifolds, not necessarily two-generating. Concerning question
\cite[Section 7 (b)]{ar04}, we can prove a regularity result on optimal
transport maps, showing that under appropriate assumptions (including the Heisenberg group) they are approximately
differentiable a.e. Moreover,  under some weak assumptions on
the measures, the transport map is shown to be truly differentiable a.e. (see Theorem \ref{thmapproxdiff} and Remark
\ref{rmkdiff}). This allows for the first time in this setting to
apply the area formula, and to write a weak formulation of the
Monge-Amp\`ere equation (see Remark \ref{rmkMA}). Finally, Theorem
\ref{thmabscont} answers to problem \cite[Section 7 (c)]{ar04} not
only in the Heisenberg group (which was already solved in
\cite{figjul}) but also in more general cases.\\

The structure of the paper is the following:

In Section \ref{sect:prelimresults}, we introduce some concepts of sub-Riemannian geometry and optimal transport appearing in the statements of the results.

In Section \ref{sect:statementresults}, we present our results on
the mass transportation problem in sub-Riemannian geometry:
existence and uniqueness theorems on optimal transport maps
(Theorems \ref{mainthm} and \ref{thmrectif}), absolute continuity
property along Wasserstein geodesics (Theorem \ref{thmabscont}),
and finally regularity of the optimal transport map and its
consequences (Theorem \ref{thmapproxdiff} and Remarks
\ref{rmkdiff}, \ref{rmkMA}). For sake of simplicity, all the
measures appearing in these results are assumed to have compact
supports. In the last paragraph of Section
\ref{sect:statementresults} we discuss the possible extensions of
our results to the non-compact case.

In Section \ref{sect:examples}, we give a list of sub-Riemannian structures for which our different results may be applied. These cases include
fat distributions, two-generating distributions, generic distribution of rank $\geq 3$, nonholonomic distributions on three-dimensional manifolds, medium-fat distributions, codimension-one nonholonomic distributions, and rank-two distributions in four-dimensional manifolds.

Since the proofs of the theorems require lots of tools and results from sub-Riemannian geometry, we recall in Section \ref{sect:subRiemgeom} basic facts in sub-Riemannian geometry, such as the characterization of singular horizontal paths, the description of sub-Riemannian minimizing geodesics, or the properties of the sub-Riemannian  exponential mapping. Then, we present some results concerning the regularity of the sub-Riemannian distance function and its cut locus. These latter results are the key tools in the proofs of the our transport theorems.

In Section \ref{sect:proof}, taking advantage of the regularity properties obtained in the previous section,  we provide all the proofs of the results stated in Section \ref{sect:statementresults}.

Finally, in Appendix A, we recall some classical facts on
semiconcave functions, while in Appendix B we prove auxiliary
results needed in Section \ref{sect:examples}.

\section{Preliminaries}\label{sect:prelimresults}

\subsection{Sub-Riemannian manifolds}

A sub-Riemannian manifold is given by a triple $(M,\Delta,g)$
where $M$ denotes a smooth connected manifold of dimension $n$,
$\Delta$ is a smooth nonholonomic distribution of rank $m<n$ on
$M$, and $g$ is a Riemannian metric on $M$\footnote{Note that in
general the definition of a sub-Riemannian structure only involves
a Riemannian metric on the distribution. However, since in the
sequel we need a global Riemannian distance on the ambient
manifold and we need to use Hessians, we prefer to work with a
metric defined globally on $TM$.}. We recall that a smooth
distribution of rank $m$ on $M$ is a rank $m$ subbundle of $TM$.
This means that, for every $x \in M$, there exist a neighborhood
$\mathcal{V}_{x}$ of $x$ in $M$, and a $m$-tuple $(f_{1}^x,
\ldots, f_{m}^x)$ of smooth vector fields on $\mathcal{V}_{x}$,
linearly independent on $\mathcal{V}_{x}$, such that
$$
\Delta(z) =\SPAN
\left\{f_1^x(z),\ldots, f_m^x(z) \right\} \qquad \forall z\in \mathcal{V}_x.
$$
One says that the $m$-tuple of vector fields $(f_{1}^x,\ldots ,f_{m}^x)$
represents locally the distribution $\Delta$. The distribution $\Delta$ is said to be \textit{nonholonomic} (also called totally nonholonomic \textit{e.g.} in \cite{as04}) if, for every $x \in M$,
there is a $m$-tuple $(f_{1}^x, \ldots, f_{m}^x)$ of smooth vector fields on $\mathcal{V}_{x}$ which represents locally the distribution and such that
$$
\mbox{Lie} \left\{ f_1^x , \ldots, f_m^x \right\} (z) = T_zM \qquad \forall z \in \mathcal{V}_x,
$$
that is such that the Lie algebra\footnote{We recall that, for any
family $\mathcal{F}$ of smooth vector fields on $M$, the Lie
algebra of vector fields generated by $\mathcal{F}$, denoted by
$\mbox{Lie}(\mathcal{F})$, is the smallest vector space $S$
satisfying
$$
[X,Y] \subset S \qquad \forall X\in \mathcal{F}, \quad \forall Y \in S,
$$
where $[X,Y]$ is the Lie bracket of $X$ and $Y$.} spanned by $f_{1}^x, \ldots, f_{m}^x$, is equal to the whole tangent space $T_z M$ at every point $z\in \mathcal{V}_{x}$. This Lie algebra property is often called \textit{H\"ormander's condition}. \\

A curve $\gamma :[0,1] \rightarrow M$ is called a \textit{horizontal path} with respect to $\Delta$ if it belongs to $W^{1,2}([0,1],M)$ and satisfies
$$
\dot{\gamma}(t)\in\Delta(\gamma(t)) \qquad \mbox{for a.e. } t\in [0,1].
$$
According to the classical Chow-Rashevsky Theorem (see
\cite{bellaiche96,chow39,montgomery02,rashevsky38,riffordbook}),
since the distribution is nonholonomic on $M$, any two points of
$M$ can be joined by a horizontal path. That is for every $x,y \in
M$ there exists a horizontal path $\gamma :[0,1] \rightarrow M$
such that $\gamma(0)=x$ and $\gamma(1)=y$. For $x\in M$, let
$\Omega_\Delta(x)$ denote the set of horizontal paths
$\gamma:[0,1]\rightarrow M$ such that $\gamma(0)=x$. The set
$\Omega_\Delta(x)$, endowed with the $W^{1,2}$-topology, inherits
a Hilbert manifold structure (see \cite{montgomery02}). The
end-point mapping from $x$ is defined by
$$
\begin{array}{rcl}
\textrm{E}_{x} : \Omega_\Delta(x) & \longrightarrow & M\\
\gamma & \longmapsto & \gamma(1).
\end{array}
$$
It is a smooth mapping. A path $\gamma $ is said to be
\textit{singular} if it is horizontal and if it is a critical
point for the end-point mapping $\textrm{E}_{x}$, that is if the
differential of $\textrm{E}_{x}$ at $\gamma$ is singular (i.e. not
onto). A horizontal path which is not singular is called \textit{nonsingular} or \textit{regular}. Note that the regularity or singularity property of a given horizontal path depends only on the distribution, not on the metric $g$.\\

The \textit{length} of a path $\gamma \in
\Omega_{\Delta}(x)$ is defined by
\begin{equation}
\label{deflength}
\mbox{length}_{g}(\gamma ) := \int_0^1
\sqrt{g_{\gamma(t)}(\dot{\gamma}(t),\dot{\gamma}(t))} dt.
\end{equation}
The \textit{sub-Riemannian distance} $d_{SR}(x,y)$ (also called
Carnot-Carath\'eodory distance) between two points $x,y$ of $M$ is
the infimum over the lengths of the horizontal paths joining $x$
and $y$. Since the distribution is nonholonomic on $M$, according
to the Chow-Rashevsky Theorem (see
\cite{bellaiche96,chow39,montgomery02,rashevsky38,riffordbook})
the sub-Riemannian distance is finite and continuous\footnote{In
fact, thanks to the so-called Mitchell's ball-box Theorem (see
\cite{montgomery02}), the sub-Riemannian distance can be shown to
be locally H\"older continuous on $M \times M$.} on $M \times M$.
Moreover, if the manifold $M$ is a complete metric
space\footnote{Note that, since the distribution $\Delta$ is
nonholonomic on $M$, the topology defined by the sub-Riemannian
distance $d_{SR}$ coincides with the original topology of $M$ (see
\cite{bellaiche96,montgomery02}). Moreover, it can be shown that,
if the Riemannian manifold $(M,g)$ is complete, then for any
nonholonomic distribution $\Delta$ on $M$ the sub-Riemannian
manifold $(M,\Delta,g)$ equipped with its sub-Riemannian distance
is complete.} for the sub-Riemannian distance $d_{SR}$, then,
since $M$ is connected, for every pair $x,y$ of points of $M$
there exists a horizontal path $\gamma $ joining $x$ to $y$ such
that
$$
d_{SR}(x,y) = \mbox{length}_{g}(\gamma ).
$$
Such a horizontal path is called a \textit{sub-Riemannian minimizing geodesic} between $x$ and $y$. \\

Assuming that $(M,d_{SR})$ is complete, denote by $T^*M$ the cotangent bundle of $M$, by $\omega$ the canonical symplectic form on $T^*M$, and by $\pi:T^*M \rightarrow M$ the canonical projection.  The \textit{sub-Riemannian Hamiltonian} $H:T^*M\rightarrow\R$ which is canonically associated with the sub-Riemannian structure is defined as follows: for every $x\in M$,
the restriction of $H$ to the
fiber $T^*_xM$ is given by the nonnegative quadratic form
\begin{equation}
\label{monHam}
p \longmapsto \frac{1}{2}\max\left\{ \frac{p(v)^2}{g_x(v,v)}\
\vert\ v\in \Delta(x)\setminus\{0\} \right\} .
\end{equation}
Let $\overrightarrow{H}$ denote the Hamiltonian vector field on
$T^*M$ associated to $H$, that is
$\iota_{\overrightarrow{H}}\omega=-dH$. A \textit{normal extremal}
is an integral curve of $\overrightarrow{H}$ defined on $[0,1]$,
i.e. a curve $\psi(\cdot):[0,1]\rightarrow T^*M$ satisfying
$$
\dot{\psi}(t) = \overrightarrow{H}(\psi(t)), \qquad \forall t\in[0,1].
$$
Note that the projection of a normal extremal is a horizontal path with respect to $\Delta$. For every $x\in M$, the \textit{exponential mapping} with respect to $x$ is defined by
$$
\begin{array}{rcl}
\exp_{x} : T^*_xM & \longrightarrow & M\\
p & \longmapsto & \pi(\psi(1)),
\end{array}
$$
where $\psi$ is the normal extremal such that $\psi(0)=(x,p)$ in local coordinates. We stress that, unlike the Riemannian setting, the sub-Riemannian exponential mapping with respect to $x$ is defined on the cotangent space at $x$. \\

\noindent \textbf{Remark:} from now on, all sub-Riemannian manifolds appearing in the paper are assumed to be complete with respect to the sub-Riemannian distance.

\subsection{Preliminaries in optimal transport theory}
As we already said in the introduction, we recall that, given a cost function $c:X\times Y \to \R$, we are looking for a transport map
$T:X\to Y$ which minimizes the transportation cost $\int c(x,T(x))\,d\mu$.
The constraint $T_\#\mu=\nu$  being highly non-linear, the optimal transport problem is quite difficult from the viewpoint of calculus
of variation. The major advance on this problem was due to
Kantorovich, who proposed in \cite{kant1, kant2} a notion of weak
solution of the optimal transport problem. He suggested to look
for \textit{plans} instead of transport maps, that is probability
measures $\gamma$ in $X \times Y$ whose marginals are $\mu$ and
$\nu$, i.e.
$$
(\pi_X)_\sharp \gamma=\mu \qquad \text{and} \qquad (\pi_Y)_\sharp
\gamma=\nu,
$$
where $\pi_X: X \times Y \rightarrow X$ and $\pi_Y: X \times Y
\rightarrow Y$ are the canonical projections. Denoting by
$\Pi(\mu,\nu)$ the set of plans, the new minimization problem
becomes the following:
\begin{equation}
\label{kantprob0} C(\mu,\nu) =\min_{\gamma \in \Pi(\mu,\nu)}
\left\{ \int_{M \times M} c(x,y)\,d\gamma(x,y) \right\}.
\end{equation}
If $\gamma$ is a minimizer for the Kantorovich formulation, we say
that it is an \textit{optimal plan}. Due to the linearity of the
constraint $\gamma \in \Pi(\mu,\nu)$, it is simple using weak
topologies to prove existence of solutions to (\ref{kantprob0}):
this happens for instance whenever $X$ and $Y$ are Polish spaces,
and $c$ is lower semicontinuous and bounded from below (see for
instance \cite{villani03, villaniSF}). The connection between the
formulation of Kantorovich and that of Monge can be seen by
noticing that any transport map $T$ induces the plan defined by
$(Id \times T)_\sharp \mu$, which is concentrated on the graph of
$T$. Hence the problem of showing existence of optimal transport
maps can be reduced to prove that an optimal transport plan is
concentrated on a graph. Moreover, if one can show that
\textit{any} optimal plan in concentrated on a graph, since
$\frac{\g_1+\g_2}{2}$ is optimal if so are $\g_1$ and $\g_2$,
uniqueness of the transport map easily follows.

\begin{definition}{\rm
A function $\phi:X \to \R$ is said \textit{$c$-concave} if there exists a function
$\phi^c:Y \to \R \cup \{-\infty\}$, with $\phi^c \not\equiv
-\infty$, such that
$$
\phi(x)=\inf_{y \in Y} \left\{c(x,y) - \phi^c(y)\right\}.
$$
If $\phi$ is $c$-concave, we define the
\textit{$c$-superdifferential of $\phi$ at $x$} as
$$
\p^c\phi(x):=\{y \in Y \mid \phi(x) + \phi^c(y)=c(x,y)\}.
$$
Moreover we define the \textit{$c$-superdifferential of $\phi$} as
$$
\p^c\phi:=\{(x,y) \in X \times Y \mid y \in \p^c\phi(x)\}.
$$}
\end{definition}

As we already said in the introduction, we are interested in studying the optimal
transport problem on $M \times M$ ($M$ being a complete sub-Riemannian
manifold) with the cost function given by $c(x,y)=d_{SR}^2(x,y)$.

\begin{definition}{\rm
Denote by $P_c(M)$ the set of compactly supported probability
measures in $M$ and by $P_2(M)$ the set of Borel probability measures
on $M$ with finite $2$-order moment, that is the set of $\mu$ satisfying
$$
\int_M d_{SR}^2(x,x_0)\,d\mu(x)<+\infty \qquad \text{for some $x_0
\in M$.}
$$
Furthermore, we denote by $P_c^{ac}(M)$ (resp. $P^{ac}_2(M)$) the
subset of $P_c(M)$ (resp. $P_2(M)$) that consists of the
probability measures on $M$ which are absolutely continuous with
respect to the volume measure.}
\end{definition}

Obviously $P_c(M) \subset P_2(M)$. Moreover we remark that, by the
triangle inequality for $d_{SR}$, the definition of $P_2(M)$ does
not depend on $x_0$. The space $P_2(M)$ can be endowed with the
so-called \textit{Wasserstein distance} $W_2$:
$$
W_2^2(\mu,\nu):=\min_{\gamma \in \Pi(\mu,\nu)} \left\{ \int_{M
\times M} d^2(x,y)\,d\gamma(x,y) \right\}
$$
(note that $W_2^2$ is nothing else than the infimum in the
Kantorovich problem). As $W_2$ defines a finite metric on $P_2(M)$, one can speak about
geodesic in the metric space $(P_2,W_2)$. This space turns out,
indeed, to be a length space (see for example \cite{agsbook,villani03,
villaniSF}).\\

From now on, $\supp(\mu)$ and $\supp(\nu)$ will denote the supports of $\mu$ and $\nu$ respectively, i.e.
the smallest closed sets on which $\mu$ and $\nu$ are respectively concentrated.

The following result is well-known (see for instance \cite[Chapter
5]{villaniSF}):

\begin{theorem}
\label{dualitythm} Let us assume that $\mu,\nu \in P_2(M)$. Then
there exists a $c$-concave function $\phi$ such that the following
holds: a transport plan $\g \in \Pi(\mu,\nu)$ is optimal if and
only if $\g(\p^c \phi)=1$ (that is $\g$ is concentrated on the
$c$-superdifferential of $\phi$). Moreover one can assume that the
following holds:
$$
\phi(x)=\inf_{y \in \supp(\nu)} \left\{d_{SR}^2(x,y) - \phi^c(y)\right\} \qquad
\forall x \in M,
$$
$$
\phi^c(y)=\inf_{x \in \supp(\mu)} \left\{d_{SR}^2(x,y) - \phi(x)\right\} \qquad
\forall y \in M.
$$
In addition, if $\mu,\nu \in P_c(M)$, then both infima are indeed minima
(so that $\p^c \phi(x) \cap \supp(\nu)\neq \emptyset$ for $\mu$-a.e. $x$), and the functions
$\phi$ and $\phi^c$ are continuous.
\end{theorem}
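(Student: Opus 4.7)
The plan is to invoke the standard Kantorovich duality apparatus, which applies in our setting because the Chow--Rashevsky theorem guarantees that $c(x,y) = d_{SR}^2(x,y)$ is continuous and nonnegative on $M \times M$. I would essentially follow the abstract scheme of \cite[Chapter 5]{villaniSF}, in four steps.

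First, existence of an optimal plan. Since $\mu,\nu \in P_2(M)$, the triangle inequality gives $\int c\,d(\mu \otimes \nu) < +\infty$, so the infimum in (\ref{kantprob0}) is finite. The set $\Pi(\mu,\nu)$ is tight (the marginals are prescribed) and weakly closed, and $\g \mapsto \int c\,d\g$ is weakly lower semicontinuous because $c$ is continuous and nonnegative; the direct method then delivers an optimal plan $\g^*$. Second, produce the $c$-concave potential via cyclical monotonicity: the support of any optimal plan is $c$-cyclically monotone, and by the Rockafellar-type theorem of R\"uschendorf/Smith--Knott/Gangbo--McCann any such set is contained in the $c$-superdifferential $\p^c\phi$ of some $c$-concave $\phi$. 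Conversely, any $\g$ concentrated on $\p^c\phi$ is optimal via the duality identity $\int c\,d\g = \int \phi\,d\mu + \int \phi^c\,d\nu$, which saturates a lower bound valid on all of $\Pi(\mu,\nu)$. This yields the ``if and only if''.

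Third, normalize $\phi$ by the double $c$-transform restricted to the supports: set
$$
\tilde\phi^c(y) := \inf_{x \in \supp(\mu)}\bigl\{d_{SR}^2(x,y) - \phi(x)\bigr\}, \qquad
\tilde\phi(x) := \inf_{y \in \supp(\nu)}\bigl\{d_{SR}^2(x,y) - \tilde\phi^c(y)\bigr\}.
$$
Standard properties of the $c$-transform give $\tilde\phi \geq \phi$ on $\supp(\mu)$ and $\tilde\phi^c \geq \phi^c$ on $\supp(\nu)$, with equality $\mu$- resp.\ $\nu$-almost everywhere, so $(\tilde\phi,\tilde\phi^c)$ is again an admissible optimal pair realizing the displayed formulas.

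Finally, assume compact supports. Then $d_{SR}^2$ is bounded and uniformly continuous on $\supp(\mu) \times \supp(\nu)$, so both infima are attained (whence minima) and $\p^c\phi(x) \cap \supp(\nu) \neq \emptyset$ for every $x$ where $\phi(x)$ is finite, covering $\mu$-a.e.\ $x$. Moreover, the family $\{x \mapsto d_{SR}^2(x,y) - \phi^c(y)\}_{y \in \supp(\nu)}$ and its symmetric analogue are equicontinuous on compacts, so $\phi$ and $\phi^c$ are continuous as infima of equicontinuous families. I do not expect a serious obstacle here: continuity of $d_{SR}$ supplies all compactness and equicontinuity required. The only real subtlety is ensuring that the potentials remain proper (not identically $-\infty$) through the normalization step, which in the compact-support regime is immediate from boundedness of $c$ on $\supp(\mu) \times \supp(\nu)$, and in the $P_2$ regime follows from the integrability of $d_{SR}^2(x,x_0)$ against the marginals.
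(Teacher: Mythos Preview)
Your proposal is correct and matches the paper's treatment: the paper does not prove this theorem at all but simply cites \cite[Chapter 5]{villaniSF} as a well-known result, and your sketch is precisely the standard duality argument from that reference. There is nothing to add.
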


By the above theorem we see that, in order to prove existence and
uniqueness of optimal transport maps, it suffices to prove that
there exist two Borel sets $Z_1,Z_2 \subset M$, with
$\mu(Z_1)=\nu(Z_2)=1$, such that $\p^c\phi$ is a graph inside $Z_1
\times Z_2$ (or equivalently that $\p^c\phi(x) \cap Z_2$ is a
singleton for all $x \in Z_1$).

\section{Statement of the results}\label{sect:statementresults}

\subsection{Sub-Riemannian versions of Brenier-McCann's Theorems}

The main difficulty appearing in the sub-Riemannian setting (unlike the Riemannian situation) is that, in general, the
squared distance function is not locally Lipschitz on the diagonal.
This gives rise to difficulties which make the proofs more technical than in the Riemannian case (and some new ideas are also needed).
In order to avoid technicalities
which would obscure the main ideas of the proof,
we will state our
results under some simplifying assumptions on the measures, and in Paragraph
\ref{subsect:noncpt} we will explain how to remove them.

Before stating our first existence and uniqueness result, we
introduce the following definition:
\begin{definition}{\rm
Given a $c$-concave function $\phi:M \to \R$, we define the
``moving" set $\Mov^\phi$ and the ``static" set $\Stat^\phi$ as
$$
\Mov^\phi:=\{x \in M \mid x \not\in\p^c\phi(x)\},
$$
$$
\Stat^\phi:=M\setminus\Mov^\phi=\{x \in M \mid x\in\p^c\phi(x)\}.
$$
}
\end{definition}

We will also denote by $\pi_1:M\times M \to M$ and $\pi_2:M\times M \to M$ the canonical projection on the first
and on the second factor, respectively. In the sequel, $D$ denotes the diagonal in $M\times M$, that is
$$
D  := \left\{ (x,y)\in M \times M \mid x=y\right\}.
$$
Furthermore, we refer the reader to Appendix A for the definition of a locally semiconcave function.

\begin{theorem}[\bf Optimal transport map for absolutely continuous measures]
\label{mainthm} Let $\mu \in P_c^{ac}(M)$, $\nu \in P_c(M)$. Assume that there
exists an open set $\Omega \subset M\times M$ such that
$\supp(\mu\times \nu) \subset \O$, and $d_{SR}^2$ is locally
semiconcave (resp. locally Lipschitz) on $\Omega \setminus D$.
Let $\phi$ be the $c$-concave function provided by Theorem \ref{dualitythm}. Then:
\begin{enumerate}
\item[(i)] $\Mov^\phi$ is open, and $\phi$ is locally semiconcave (resp. locally Lipschitz) in a neighborhood of $\Mov^\phi\cap\supp(\mu)$. In particular $\phi$ is
differentiable $\mu$-a.e. in $\Mov^\phi$.
\item[(ii)] For $\mu$-a.e.
$x \in \Stat^\phi$, $\p^c\phi(x)=\{x\}$.
\end{enumerate}
In particular, there exists a unique optimal transport map
defined $\mu$-a.e. by\footnote{The factor $\frac{1}{2}$ appearing in front of $d\phi(x)$ is due to the fact that we are considering the cost function $d_{SR}^2(x,y)$ instead of the (equivalent)
cost $\frac{1}{2}d_{SR}^2(x,y)$}
$$
T(x):= \left\{
\begin{array}{cc}
\exp_x(-\textstyle{\frac{1}{2}}\,d\phi(x))& \text{if } x\in\Mov^\phi\cap\supp(\mu),\\
x& \text{if }x \in\Stat^\phi\cap\supp(\mu),
\end{array}
\right.
$$
and for $\mu$-a.e. $x$ there exists a unique minimizing geodesic between $x$ and $T(x)$.
\end{theorem}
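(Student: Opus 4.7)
\medskip\noindent\textbf{Proof plan.}

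First I would apply Theorem~\ref{dualitythm} to obtain a continuous Kantorovich pair $(\phi,\phi^c)$. Since $c(x,x)=0$ and $\phi(x)+\phi^c(y)\le c(x,y)$, one has $\phi+\phi^c\le 0$ on $M$ with equality precisely on $\Stat^\phi$, so $\Mov^\phi=\{\phi+\phi^c<0\}$ is automatically open by continuity of $\phi+\phi^c$.

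For part~(i), the plan is to exhibit $\phi$ locally as a $c$-transform over a well-chosen compact set. Given $x_0\in\Mov^\phi\cap\supp(\mu)$, the closedness of the graph of $\p^c\phi$ and the compactness of $\supp(\nu)$, combined with $x_0\notin\p^c\phi(x_0)$, produce a neighborhood $U$ of $x_0$ and a compact set $K\subset\supp(\nu)\setminus\{x_0\}$ with $U\times K\subset\Omega\setminus D$ such that
\begin{equation*}
\phi(x)=\inf_{y\in K}\bigl\{d_{SR}^2(x,y)-\phi^c(y)\bigr\}\qquad\forall\, x\in U.
\end{equation*}
By the standing hypothesis the family $\{d_{SR}^2(\cdot,y)-\phi^c(y)\}_{y\in K}$ is uniformly locally semiconcave (resp.\ Lipschitz) on $U$, and both properties are preserved under infima; hence so is $\phi$. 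Differentiability $\mu$-a.e.\ on $\Mov^\phi$ follows from Alexandrov's theorem (resp.\ Rademacher's) combined with $\mu\ll\Leb{n}$.

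For part~(ii), set $E:=\{x\in\Stat^\phi:\p^c\phi(x)\cap\supp(\nu)\ne\{x\}\}$ and argue by contradiction: assume $\mu(E)>0$, hence $\Leb{n}(E)>0$, and pick a Lebesgue density point $x_0\in E$ together with $y_0\in\p^c\phi(x_0)\cap\supp(\nu)\setminus\{x_0\}$. The inclusion $y_0\in\p^c\phi(x_0)$ yields the pointwise upper bound
\begin{equation*}
\phi(x)-\phi(x_0)\le d_{SR}^2(x,y_0)-d_{SR}^2(x_0,y_0)\qquad\forall\, x\in M,
\end{equation*}
whose right-hand side is semiconcave at $x_0$ (since $(x_0,y_0)\in\Omega\setminus D$) with a nontrivial first-order covector $p_0$ encoded by the initial covector of a minimizing geodesic from $x_0$ to $y_0$ (Section~\ref{sect:subRiemgeom}). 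From $x_0\in\Stat^\phi$ together with the formula for $\phi^c$ one derives the complementary lower bound $\phi(x)-\phi(x_0)\ge -d_{SR}^2(x,x_0)$ for $x\in\Stat^\phi$. Choosing $x_n\to x_0$ inside $\Stat^\phi$ along horizontal directions $v\in\Delta(x_0)$ (such sequences exist by Lebesgue density, and the ball-box theorem gives the quadratic growth $d_{SR}^2(x_n,x_0)=O(|x_n-x_0|^2)$ along these directions), the two bounds become incompatible unless $p_0$ vanishes on $\Delta(x_0)$, contradicting the nonvanishing horizontal initial velocity of the geodesic from $x_0$ to~$y_0$.

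Granted (i) and~(ii), the transport map is read off as usual. On $\Stat^\phi\cap\supp(\mu)$, part~(ii) forces $T(x)=x$ for $\mu$-a.e.\ $x$. On $\Mov^\phi\cap\supp(\mu)$, $\phi$ is differentiable $\mu$-a.e.; for any such $x$ and any $y\in\p^c\phi(x)\cap\supp(\nu)$ the function $z\mapsto d_{SR}^2(z,y)-\phi(z)$ attains its minimum at $x$, which, coupled with the semiconcavity of $d_{SR}^2(\cdot,y)$ near $x$ and the differentiability of $\phi$ at $x$, forces the differential of $d_{SR}^2(\cdot,y)$ at $x$ to coincide with $d\phi(x)$. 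The first-order description of the sub-Riemannian squared distance away from the diagonal (Section~\ref{sect:subRiemgeom}) then pins $y=\exp_x(-\tfrac12 d\phi(x))$ uniquely and delivers the unique minimizing geodesic. Uniqueness of the optimal plan is immediate since every optimal plan is concentrated on $\p^c\phi$. The main obstacle is step~(ii): the failure of $d_{SR}^2(\cdot,x_0)$ to be locally Lipschitz at $x_0$ rules out a direct Riemannian-style supergradient argument at a static point, and forces the density analysis to be restricted to horizontal directions $\Delta(x_0)$, the only directions along which the ball-box scaling makes the sub-Riemannian second-order error small enough to be dominated by the linear term coming from~$y_0$.
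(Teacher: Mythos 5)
Your treatment of part (i), the openness of $\Mov^\phi$, and the derivation of the map $T$ from (i)--(ii) all parallel the paper's proof. The genuine divergence, and the genuine problem, is in part (ii).

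The paper does not use a Lebesgue density point argument. It observes that $\phi$ is \emph{Lipschitz with respect to $d_{SR}$} (being an infimum of functions that are uniformly $d_{SR}$-Lipschitz), and then invokes a Pansu--Rademacher-type theorem (\cite[Theorem 3.2]{monsc01}) to get, for Lebesgue-a.e.\ $x$, an expansion $\phi(y)-\phi(x)-\sum_i f_i\phi(x)(y_i-x_i)=o\bigl(d_{SR}(x,y)\bigr)$ with the error measured in the \emph{sub-Riemannian} distance. At static points the horizontal derivatives vanish (by comparing with $\phi(z)-\phi(x)\le d_{SR}^2(z,x)$ along integral curves of the $f_i$), and plugging this into the inequality $\phi(\gamma_{x,y}(t))-\phi(x)\le d_{SR}^2(\gamma_{x,y}(t),y)-d_{SR}^2(x,y)=(-2t+t^2)d_{SR}^2(x,y)$ along a minimizing geodesic $\gamma_{x,y}$ yields a contradiction. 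The whole argument lives at a single horizontally-differentiable point; no density points, no Euclidean ball-box estimates.

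Your argument replaces this with: take a Lebesgue density point $x_0$ of the bad set inside $\Stat^\phi$, a supergradient $p_0$ of $d_{SR}^2(\cdot,y_0)$ at $x_0$ with nontrivial horizontal part, and approach $x_0$ through $\Stat^\phi$ ``along horizontal directions'' with the claimed bound $d_{SR}^2(x_n,x_0)=O(|x_n-x_0|^2)$. This is where the gap lies. Lebesgue density of a set $E$ at $x_0$ gives you points of $E$ inside cones of \emph{fixed} aperture around a given Euclidean direction $v$; it does not give you points of $E$ on the line $x_0+tv$ itself. For points $x$ in a cone of aperture $\e$ around a horizontal $v$, the non-horizontal components of $x-x_0$ are only controlled by $\e|x-x_0|$, and the ball-box estimate then gives, for a distribution of step $s$, $d_{SR}^2(x,x_0)\sim(\e|x-x_0|)^{2/s}$ for small $|x-x_0|$. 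This is \emph{not} $O(|x-x_0|^2)$: it is $\e|x-x_0|$ when $s=2$ (Heisenberg), and it is $\e^{2/s}|x-x_0|^{2/s}$ with $2/s<1$ when $s\ge 3$. In the first case your argument can still be rescued by sending $\e\to 0$ after $|x_n-x_0|\to 0$, but you did not say this, and the bound you wrote is wrong. In the second case ($s\ge 3$) the rescue fails outright: the lower bound $\phi(x_n)-\phi(x_0)\ge -d_{SR}^2(x_n,x_0)\sim-\e^{2/s}|x_n-x_0|^{2/s}$ is of lower order than the linear term $\<p_0,x_n-x_0\>$, so after dividing by $|x_n-x_0|$ the left side diverges to $-\infty$ and yields no information about $\<p_0,v\>$. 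So your argument does not establish (ii) for general nonholonomic distributions (it would cover only step-$2$ cases, and even there only after a repair you did not make). This is precisely why the paper reaches for the sub-Riemannian Rademacher theorem, whose error term $o(d_{SR}(x,y))$ is tailored to the anisotropic geometry and avoids any comparison between Euclidean and sub-Riemannian balls.

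One further, smaller point: to conclude that a supergradient $p_0$ of $d_{SR}^2(\cdot,y_0)$ at $x_0$ has nontrivial horizontal part, you implicitly identify $p_0$ with the initial covector of a \emph{normal} extremal from $x_0$ to $y_0$ and use $2H(x_0,p_0)=d_{SR}^2(x_0,y_0)>0$; this needs to be said, and it is one of the reasons the paper routes the argument through Proposition~\ref{propexp1} and the Pansu--Rademacher mechanism rather than through supergradients of the squared distance at a static point.
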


The two main issues in the proof of the above theorem are the regularity of the $c$-concave function $\phi$ provided by Theorem \ref{dualitythm} and the existence and uniqueness of minimizing projections of normal extremals between almost all pairs of points in $\p^c\phi$. Roughly speaking, the regularity properties of $\phi$ are consequences of regularity assumptions made on the cost function while the second issue is tackled (as it was already done by Agrachev and Lee in \cite{al07}) by transforming  a problem with end-point constraint into a problem with free end-point (see Proposition \ref{propexp1}).  Furthermore, as can be seen from the proof (given in
Section \ref{sect:proof}),  assertion (ii) in Theorem
\ref{mainthm} always holds without any assumption on the
sub-Riemannian distance. That is, for any optimal transport
problem on a complete sub-Riemannian manifold between two measures
$\mu \in P_c^{ac}(M)$ and $\nu \in P_c(M)$, we always have
$$
\p^c\phi(x) = \{x\} \qquad \mbox{for }\mu\mbox{-a.e. } x\in
\Stat^\phi,
$$
where $\phi$ is the $c$-concave function provided by Theorem
\ref{dualitythm}. Such a result is a consequence of a Pansu-Rademacher Theorem which was already used by Ambrosio and Rigot in \cite{ar04}.\\

Theorem \ref{mainthm} above can be refined if the sub-Riemannian
distance is assumed to be locally Lipschitz on the diagonal. In
that way, we obtain the sub-Riemannian version of McCann's Theorem
on Riemannian manifolds (see \cite{mccann01}), improving the result of Agrachev and Lee (see \cite{al07}).\\

\begin{theorem}[\bf Optimal transport map for more general measures]
\label{thmrectif} Let $\mu,\nu \in P_c(M)$, and
suppose that $\mu$ gives no measure to countably $(n-1)$-rectifiable sets.
Assume that there
exists an open set $\Omega \subset M\times M$ such that
$\supp(\mu\times \nu) \subset \O$, and $d_{SR}^2$ is locally
semiconcave on $\Omega \setminus D$. Suppose further that $d_{SR}^2$ is locally
Lipschitz on $\Omega$, and let $\phi$ be the $c$-concave function provided by Theorem \ref{dualitythm}.
Then:
\begin{enumerate}
\item[(i)] $\Mov^\phi$ is open, and $\phi$ is locally semiconcave in a neighborhood of $\Mov^\phi\cap\supp(\mu)$.
In particular $\phi$ is differentiable $\mu$-a.e. in $\Mov^\phi$.
\item[(ii)] For $\mu$-a.e.
$x \in \Stat^\phi$, $\p^c\phi(x)=\{x\}$.
\end{enumerate}
In particular, there exists a unique optimal transport map
defined $\mu$-a.e. by
$$
T(x):= \left\{
\begin{array}{cc}
\exp_x(-\textstyle{\frac{1}{2}}\,d\phi(x))& \text{if } x\in\Mov^\phi\cap\supp(\mu),\\
x& \text{if }x \in\Stat^\phi\cap\supp(\mu),
\end{array}
\right.
$$
and for $\mu$-a.e. $x$ there exists a unique minimizing geodesic between $x$ and $T(x)$.
\end{theorem}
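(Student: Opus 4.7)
The overall strategy is to follow the proof of Theorem \ref{mainthm} closely, making two adjustments. First, the hypothesis that $\mu$ gives no mass to countably $(n-1)$-rectifiable sets replaces absolute continuity: it is exactly the right condition to ensure that the differentiability points of a locally semiconcave function form a set of full $\mu$-measure. Second, the strengthened assumption that $d_{SR}^2$ is locally Lipschitz on all of $\Omega$ (not just on $\Omega \setminus D$) is used to handle the static part, where previously we used absolute continuity of $\mu$ via a Pansu--Rademacher argument.

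For $(i)$, openness of $\Mov^\phi$ is immediate from continuity of $\phi$ and $\phi^c$ (Theorem \ref{dualitythm}), since $\Mov^\phi = \{x : \phi(x) + \phi^c(x) < 0\}$. To get local semiconcavity of $\phi$ near $\Mov^\phi \cap \supp(\mu)$, I fix $x_0 \in \Mov^\phi \cap \supp(\mu)$ and use continuity of $\phi + \phi^c$ together with compactness of $\supp(\nu)$ to produce an open neighborhood $U \ni x_0$ and a compact set $K \subset \supp(\nu)$ with $U \times K \subset \Omega \setminus D$ such that, for all $x \in U$, the infimum in $\phi(x) = \inf_{y \in \supp(\nu)}\{d_{SR}^2(x,y) - \phi^c(y)\}$ is attained inside $K$. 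Since $d_{SR}^2$ is locally semiconcave on $\Omega \setminus D$, the family $\{x \mapsto d_{SR}^2(x,y) - \phi^c(y)\}_{y \in K}$ is uniformly semiconcave on $U$, and hence so is its infimum $\phi|_U$. Combining this with the classical fact (Appendix A) that locally semiconcave functions are differentiable outside a countably $(n-1)$-rectifiable set, and with the hypothesis on $\mu$, yields $\mu$-a.e.\ differentiability of $\phi$ on $\Mov^\phi$.

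For $(ii)$, the new ingredient is local Lipschitzness of $\phi$ on a neighborhood of $\supp(\mu)$, which follows from the local Lipschitz assumption on $d_{SR}^2$ on $\Omega$ and the compactness of $\supp(\nu)$. The aim is to show $\p^c\phi(x) = \{x\}$ for $\mu$-a.e.\ $x \in \Stat^\phi$. Following Ambrosio--Rigot \cite{ar04}, I invoke a Pansu--Rademacher-type theorem for Lipschitz functions on sub-Riemannian manifolds: $\phi$ admits a horizontal (Pansu) differential outside a set that is negligible for measures not charging $(n-1)$-rectifiable sets. Fix such an $x_0 \in \Stat^\phi$. Since $x_0 \in \p^c\phi(x_0)$, the nonnegative function $d_{SR}^2(\cdot,x_0) - \phi^c(x_0) - \phi(\cdot)$ vanishes at $x_0$, and because $d_{SR}^2(x,x_0) = o(d_{SR}(x,x_0))$ as $x \to x_0$ the horizontal differential of $d_{SR}^2(\cdot,x_0)$ at $x_0$ vanishes; consequently the horizontal differential of $\phi$ at $x_0$ is zero. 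If there existed $y_0 \in \p^c\phi(x_0)$ with $y_0 \neq x_0$, then $(x_0,y_0) \in \Omega \setminus D$, so $d_{SR}^2(\cdot,y_0)$ is locally semiconcave near $x_0$, and the same comparison forces $d\phi(x_0)$ to equal the initial covector of a minimizing geodesic from $x_0$ to $y_0$. Having zero horizontal part, this covector lies in $\Delta(x_0)^\perp$, which via Proposition \ref{propexp1} and the characterization of singular curves recalled in Section \ref{sect:subRiemgeom} forces $y_0 = x_0$, a contradiction. This comparison of horizontal differentials at static points is the main technical step of the argument, and it is precisely where the local Lipschitzness of $d_{SR}^2$ on $\Omega$ enters.

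Once $(i)$ and $(ii)$ are established, the construction of the optimal map is the same as in Theorem \ref{mainthm}: on $\Mov^\phi \cap \supp(\mu)$ one sets $T(x) = \exp_x(-\frac{1}{2} d\phi(x))$, with the existence and uniqueness of the minimizing geodesic provided by Proposition \ref{propexp1} (applied to pairs $(x,y)$ in $\p^c\phi$ with $y$ away from $x$), while on $\Stat^\phi \cap \supp(\mu)$ one sets $T(x) = x$ by $(ii)$. Uniqueness of the optimal transport map then follows from the general principle of Section \ref{sect:prelimresults}: since $\p^c\phi$ is $\mu$-a.e.\ single-valued, any two optimal plans must coincide with $(\mathrm{Id} \times T)_\sharp \mu$. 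The hardest part, as indicated, is making the Pansu-derivative comparison at static points rigorous in the generality of the present assumptions.
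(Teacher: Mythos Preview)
Your argument for (i) and for the conclusion once (i) and (ii) are in place matches the paper. The genuine gap is in your proof of (ii).

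You invoke a ``Pansu--Rademacher-type theorem'' asserting that a Lipschitz function $\phi$ is horizontally differentiable outside a set negligible for every measure that gives no mass to countably $(n-1)$-rectifiable sets. No such result is available: Pansu--Rademacher (as used in the proof of Theorem~\ref{mainthm} via \cite{monsc01}) only yields horizontal differentiability \emph{Lebesgue}-a.e., and the non-differentiability set of a merely Lipschitz function is in general not $(n-1)$-rectifiable. This is precisely why Theorem~\ref{mainthm} assumes $\mu\in P_c^{ac}(M)$. Under the weaker hypothesis here, your chain breaks at this step, and the rest of your argument for (ii) (which also tacitly needs full, not just horizontal, differentiability of $\phi$ at $x_0$ to touch the semiconcave function $d_{SR}^2(\cdot,y_0)$ from above and apply Proposition~\ref{propexp1}) does not get off the ground.

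The paper's proof of (ii) avoids this by never appealing to differentiability of the Lipschitz function $\phi$ itself. Instead it introduces the auxiliary \emph{semiconcave} functions
\[
\phi_k(x)=\inf\{d_{SR}^2(x,y)-\phi^c(y)\mid y\in\supp(\nu),\ d_{SR}(x,y)>1/k\},
\]
whose singular sets are $(n-1)$-rectifiable by Theorem~\ref{THMsingset}, so each $\phi_k$ is differentiable $\mu$-a.e. On the bad set $A_k=\{\phi=\phi_k\}$, one must then show that the horizontal part of $d\phi_k$ vanishes $\mu$-a.e. This is not automatic and is done via a geometric density argument: for a unit horizontal field $X$, the set of points of $A_k$ that are isolated in a small ``cone'' along the flow of $X$ is shown to be countably $(n-1)$-rectifiable (hence $\mu$-null); at the remaining points one finds $x_j\in A_k$ approaching $x$ along the $X$-direction, and the inequality $\phi(x_j)-\phi(x)\le d_{SR}^2(x_j,x)$ combined with the assumption that $d_{SR}^2$ is locally Lipschitz on all of $\Omega$ (this is exactly where that hypothesis enters) yields $d\phi_k(x)\cdot X(x)\le 0$. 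Running over a countable dense family of horizontal fields gives $d\phi_k(x)|_{\Delta(x)}=0$, and since $\phi_k(z)-d_{SR}^2(z,y)$ is maximized at $x$ with $d_{SR}^2(\cdot,y)$ semiconcave, one obtains a full differential of $d_{SR}^2(\cdot,y)$ at $x$ with vanishing horizontal part, contradicting the eikonal equation (Proposition~\ref{PROP1}). So the Lipschitz-on-$\Omega$ assumption is not used to get a Rademacher-type statement, but rather to control increments of $\phi$ along horizontal curves near the diagonal in this cone argument.
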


The regularity properties of the sub-Riemannian distance functions
required in the two results above are satisfied by many
sub-Riemannian manifolds. In particular Theorem \ref{mainthm}
holds as soon as there are no singular sub-Riemannian minimizing
geodesic between two distinct points in $\Omega$.  In Section
\ref{sect:examples}, we provide a list of sub-Riemannian manifolds
which satisfy the assumptions of our different results.

\subsection{Wasserstein geodesics}

Thanks to Theorem \ref{mainthm}, it is not difficult to deduce the
uniqueness of the Wasserstein geodesic between $\mu$ and $\nu$.
Moreover the structure of the transport map allows to prove, as in
the Riemannian case, that all the measures inside the geodesic are
absolutely continuous if $\mu$ is. This last property requires
however that, if $(x,y)\in \O$, then all geodesics from $x$ to $y$
do not ``exit from $\O$":

\begin{definition}{\rm
Let $\O\subset M\times M$ be an open set. We say that $\O$ is \textit{totally geodesically convex}
if for every $(x,y) \in \O$ and every geodesic $\g:[0,1]\to M$ from $x$ to $y$, one has
$$
(x,\g(t)), \, (\g(t),y) \in \O \qquad \forall t \in [0,1].
$$
}
\end{definition}

Observe that, if $\O=U\times U$ with $U \subset M$, then the above definition reduces to
say that $U$ is totally geodesically convex in the classical sense.

\begin{theorem}[\bf Absolute continuity of Wasserstein geodesics]
\label{thmabscont} Let $\mu \in P_c^{ac}(M)$, $\nu \in P_c(M)$.
Assume that there exists an open set $\O \subset M\times M$ such that
$\supp(\mu\times\nu) \subset \O$, and $d_{SR}^2$ is locally
semiconcave on $\O \setminus D$. Let $\phi$ be the $c$-concave function provided by Theorem \ref{dualitythm}.
Then there exists a unique
Wasserstein geodesic $(\mu_t)_{t \in [0,1]}$ joining $\mu=\mu_0$ to
$\nu=\mu_1$, which is given by $\mu_t:=(T_t)_\#\mu$ for $t \in [0,1]$, with
$$
T_t(x):= \left\{
\begin{array}{cc}
\exp_x(-\textstyle{\frac{t}{2}}\,d\phi(x))& \text{if } x\in\Mov^\phi\cap \supp(\mu),\\
x& \text{if }x \in\Stat^\phi\cap \supp(\mu).
\end{array}
\right.
$$
Moreover, if $\O$ is totally geodesically convex, then $\mu_t \in
P_c^{ac}(M)$ for all $t \in [0,1)$.
\end{theorem}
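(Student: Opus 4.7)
The plan is to construct, via Theorem \ref{mainthm}, a natural candidate curve $\mu_t := (T_t)_\#\mu$, show that it is a constant-speed Wasserstein geodesic between $\mu$ and $\nu$, prove that it is the only one, and finally, under the added total geodesic convexity of $\Omega$, propagate absolute continuity from $\mu_0$ to each $\mu_t$ with $t<1$.

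First I would invoke Theorem \ref{mainthm} to obtain the $c$-concave potential $\phi$, the unique optimal transport map $T$, and the fact that for $\mu$-a.e.\ $x$ the curve $s\mapsto T_s(x)=\exp_x\bigl(-\tfrac{s}{2}\,d\phi(x)\bigr)$ (respectively the constant curve if $x\in\Stat^\phi$) is the unique minimizing sub-Riemannian geodesic from $x$ to $T(x)$. Since a sub-arc of a minimizing geodesic is itself minimizing with constant speed, $d_{SR}(T_s(x),T_t(x))=|t-s|\,d_{SR}(x,T(x))$ for $\mu$-a.e.\ $x$ and all $s,t\in[0,1]$. Using $(T_s,T_t)_\#\mu\in\Pi(\mu_s,\mu_t)$ as a competitor gives
\begin{equation*}
W_2^2(\mu_s,\mu_t) \le (t-s)^2\,W_2^2(\mu,\nu),
\end{equation*}
which combined with the triangle inequality in $(P_2(M),W_2)$ forces equality and exhibits $(\mu_t)$ as a constant-speed Wasserstein geodesic joining $\mu$ to $\nu$.

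For uniqueness I would use the standard representation of Wasserstein geodesics: any such geodesic is of the form $(e_t)_\#\Pi$ for a probability measure $\Pi$ on $C([0,1],M)$ concentrated on minimizing sub-Riemannian geodesics, with $(e_0,e_1)_\#\Pi$ an optimal plan between $\mu$ and $\nu$. By Theorem \ref{mainthm} this optimal plan is unique and equals $(\mathrm{id}\times T)_\#\mu$, and for $\mu$-a.e.\ $x$ the minimizing geodesic from $x$ to $T(x)$ is unique. These two facts together determine $\Pi$ uniquely, and hence the curve $(\mu_t)$.

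It remains to prove absolute continuity of $\mu_t$ for $t\in[0,1)$ when $\Omega$ is totally geodesically convex. The convexity assumption guarantees $(x,T_t(x))\in\Omega$ for $\mu$-a.e.\ $x$, so $d_{SR}^2$ is locally semiconcave near each such pair with $x\ne T_t(x)$. Crucially, the geodesic from $x$ to $T_t(x)$ is the restriction of a \emph{strictly longer} minimizing geodesic from $x$ to $T(x)$, so $T_t(x)$ lies in the \emph{interior} of a minimizing geodesic and is therefore not a cut point of $x$. I would then apply Theorem \ref{mainthm} to the transport problem from $\mu$ to $\mu_t$ (whose cost potential is the appropriately rescaled, still semiconcave, function $t\phi$) to identify $T_t$ as the unique optimal map, and then deduce $\mu_t \le C_t\,\Leb{n}$ on compact subsets of $\pi_2(\overline{\Omega})$ via a change-of-variables estimate on the Jacobian of $T_t$, in the spirit of the Heisenberg argument of \cite{figjul}.

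The main obstacle is this last Jacobian estimate. Unlike the Riemannian case, the sub-Riemannian exponential map is not a local diffeomorphism off the cut locus in a naive sense, and one must rule out interference from singular minimizing geodesics. The local semiconcavity of $d_{SR}^2$ on $\Omega\setminus D$ provides enough regularity of $\phi$ to make $T_t$ approximately differentiable, while the "interior-of-a-geodesic" structure of $T_t(x)$ for $t<1$ is precisely what allows a uniform lower bound on its approximate Jacobian on compact subsets of $\supp(\mu)$; reconciling these two ingredients quantitatively is the delicate step of the argument.
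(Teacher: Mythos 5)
The first half of your proposal---constructing $\mu_t=(T_t)_\#\mu$, showing it is a constant-speed geodesic via sub-arcs of minimizing geodesics, and deriving uniqueness from the representation $(e_t)_\#\Pi$ together with the uniqueness of the optimal plan and of the geodesic from $x$ to $T(x)$ provided by Theorem~\ref{mainthm}---is sound and matches the paper.

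The absolute-continuity argument, however, has a genuine gap exactly where you flag it. You propose to bound the density of $\mu_t$ by a change-of-variables estimate based on a \emph{uniform lower bound} on the approximate Jacobian of $T_t$ over compact sets. But the only regularity available on $\phi$ is local semiconcavity, which gives second-order differentiability only pointwise almost everywhere (Alexandrov), not a uniform bound on $\Hess\phi$; there is no mechanism in the hypotheses that produces a quantitative lower Jacobian bound, and your text acknowledges this without supplying one. Reducing to Theorem~\ref{mainthm} applied to the pair $(\mu,\mu_t)$ also does not help: that theorem yields existence/uniqueness of an optimal map, not integrability of its Jacobian determinant.

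The paper sidesteps the Jacobian entirely by constructing a locally \emph{Lipschitz inverse} of $T_t$ rather than controlling $T_t$ itself. Concretely, it introduces the pair of rescaled potentials
$$
\phi_{1-t}(x)=\inf_{y\in\supp(\nu)}\Bigl\{\tfrac{d_{SR}^2(x,y)}{1-t}-\phi^c(y)\Bigr\},\qquad
\phi^c_{t}(y)=\inf_{x\in\supp(\mu)}\Bigl\{\tfrac{d_{SR}^2(x,y)}{t}-\phi(x)\Bigr\},
$$
and the elementary inequality $\tfrac{d_{SR}^2(x,z)}{t}+\tfrac{d_{SR}^2(z,y)}{1-t}\ge d_{SR}^2(x,y)$, which gives $\phi_{1-t}+\phi^c_t\ge 0$ with equality $\mu_t$-a.e. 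After decomposing $\Mov^\phi\cap\supp(\mu)$ into the sets $A_k=\{d_{SR}(x,T(x))>1/k\}$ and truncating the infima accordingly, both $\phi_{k,1-t}$ and $\phi^c_{k,t}$ are locally semiconcave near $T_t(A_k)$ (this is exactly where total geodesic convexity of $\Omega$ enters, to keep the relevant pairs in $\Omega\setminus D$). Since the sum is $\ge 0$ with equality on $T_t(A_k)$, Fathi's lemma (Proposition~\ref{propFATHI}) yields that $z\mapsto d\phi^c_{k,t}(z)$ is locally Lipschitz there, and Proposition~\ref{propexp1} identifies $x=\exp_{T_t(x)}\bigl(-\tfrac12 d\phi^c_{k,t}(T_t(x))\bigr)$. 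Hence $F_{t,k}(z)=\exp_z\bigl(-\tfrac12 d\phi^c_{k,t}(z)\bigr)$ is a locally Lipschitz map with $(F_{t,k})_\#(\mu_t\lfloor_{T_t(A_k)})=\mu\lfloor_{A_k}$; a Lipschitz map sends Lebesgue-null sets to Lebesgue-null sets, so a singular part of $\mu_t\lfloor_{T_t(A_k)}$ would push forward to a singular part of $\mu$, contradicting $\mu\in P_c^{ac}(M)$. This is qualitatively different from, and avoids, the Jacobian estimate your proposal leaves unresolved.
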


\subsection{Regularity of the transport map and the Monge-Amp\`ere equation}

The structure of the transport map provided by Theorem
\ref{mainthm} allows also to prove in certain cases the approximate
differentiability of the optimal transport map, and a useful
Jacobian identity. Let us first recall the notion of approximate differential:

\begin{definition}[\bf Approximate differential]\label{approxdiff}{\rm
We say that $f :M \rightarrow \R$ has an \textit{approximate
differential} at $x \in M$ if there exists a function $h:M
\rightarrow \R$ differentiable at $x$ such that the set $\{f =
h\}$ has density $1$ at $x$ with respect to the volume measure. In
this case, the approximate value of $f$ at $x$ is defined as
$\tilde f(x)=h(x)$, and the approximate differential of $f$ at $x$
is defined as $\tilde df(x)=dh(x)$.
}
\end{definition}

It is not difficult to show that the above definitions make sense. In
fact, $h(x)$ and $dh(x)$ do not depend on the choice of
$h$, provided $x$ is a density point of the set $\{f = h\}$.\\

To write the formula of the Jacobian of $T$, we will need to use the notion of Hessian.
We recall that the Hessian of a function $f :M \rightarrow \R$ is defined as the covariant derivative
of $df$: $\Hess f(x)=\nabla df(x):T_x M \times T_x M \to M$.
Observe that the notion of the Hessian depends on the Riemannian metric on $TM$. However, since the transport map
depends only on $d_{SR}$, which in turn depends only on the restriction of metric to the distribution,
a priori it may seem strange that the Jacobian of $T$ is expressed in terms of Hessians.
However, as we will see below, the Jacobian of $T$ depends on the Hessian of the function
$z \mapsto \phi(z)-d_{SR}^2(z,T(x))$ computed at $z=x$. But since $\phi(z)-d_{SR}^2(z,T(x))$
attains a maximum at $x$, $x$ is a critical point for the above function, and so its Hessian at $x$
is indeed independent on the choice of the metric.

The following result is the sub-Riemannian version of the
properties of the transport map in the Riemannian case. It was
proved on compact manifolds in \cite{cems01}, and extended to the
noncompact case in \cite{figalliSIAM}. The main difficulty in our
case comes from the fact that the structure of the sub-Riemannian
cut-locus is different with respect to the Riemannian case, and so
many complications arise when one tries to generalize the
Riemannian argument to our setting. Trying to extend the
differentiability of the transport map in great generality would
need some new results on the sub-Riemannian cut-locus which go
behind the scope of this paper (see the Open Problem in Paragraph
\ref{para:cut}). For this reason, we prefer to state the result
under some simplifying assumptions, which however holds in the
important case of the Heisenberg group (see \cite{montgomery02}),
or for example for the standard sub-Riemannian structure on the
three-sphere (see \cite{br07}).

We refer the reader to Paragraph \ref{para:cut} for the definitions of the global cut-locus $\mbox{Cut}_{SR} (M)$.

\begin{theorem}[\bf Approximate differentiability and jacobian identity]
\label{thmapproxdiff} Let $\mu\in P_c^{ac}(M)$, $\nu \in P_c(M)$.
Assume that there exists a totally geodesically convex  open set
$\Omega \subset M\times M$ such that $\supp(\mu\times \nu) \subset
\O$, $d_{SR}^2$ is locally semiconcave on $\Omega \setminus D$,
and for every $(x,y)\in \mbox{Cut}_{SR} (M) \cap \left( \Omega
\setminus  D\right)$ there are at least two distinct
sub-Riemannian minimizing geodesics joining $x$ to $y$. Let $\phi$
be the $c$-concave function provided by Theorem \ref{dualitythm}.
Then the optimal transport map is differentiable for $\mu$-a.e. $x
\in\Mov^\phi\cap\supp(\mu)$, and it is approximately
differentiable $\mu$-a.e. Moreover
$$
Y(x):=d(\exp_x)_{-\textstyle{\frac{1}{2}}d\phi(x)} \quad
\mbox{and} \quad  H(x):=\frac{1}{2}\Hess d_{SR}^2(\cdot,T(x))|_{z=x}
$$ exists for $\mu$-a.e.
$x \in\Mov^\phi\cap\supp(\mu)$, and the approximate differential of $T$ is
given by the formula
$$
\tilde dT(x)= \left\{
\begin{array}{cc}
Y(x) \bigl(H(x)-\textstyle{\frac{1}{2}}\Hess\phi(x) \bigr)& \text{if } x\in\Mov^\phi\cap\supp(\mu),\\
Id & \text{if } x\in\Stat^\phi\cap\supp(\mu),
\end{array}
\right.
$$
where $Id:T_x M \rightarrow T_x M$ denotes the identity map.

Finally, assuming both $\mu$ and $\nu$ absolutely continuous with respect to the volume measure, and denoting
by $f$ and $g$ their respective density, the
following Jacobian identity holds:
\begin{equation}
\label{eqjacident} \det\bigl(\tilde d
T(x)\bigr) =\frac{f(x)}{g(T(x))}\neq 0 \qquad \text{$\mu$-a.e.}
\end{equation}
In particular, $f(x)=g(x)$ for $\mu$-a.e. $x\in\Stat^\phi\cap\supp(\mu)$.
\end{theorem}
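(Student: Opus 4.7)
The strategy is to combine the semiconcavity of $\phi$ from Theorem \ref{mainthm}(i) with the standing cut-locus hypothesis, reducing to a smooth setting in which differentiation is routine. By Theorem \ref{mainthm}(i), $\phi$ is locally semiconcave in a neighborhood of $\Mov^\phi \cap \supp(\mu)$, so by Alexandrov's theorem (Appendix A) it admits a pointwise second-order Taylor expansion at Lebesgue-a.e.\ such $x$, and therefore at $\mu$-a.e.\ such $x$ since $\mu \in P_c^{ac}(M)$. Theorem \ref{mainthm} also supplies a full $\mu$-measure set on which the sub-Riemannian minimizing geodesic from $x$ to $T(x)$ is unique. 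For such $x \in \Mov^\phi\cap\supp(\mu)$ one has $T(x)\ne x$ and $(x,T(x))\in \Omega\setminus D$, so the contrapositive of the hypothesis (every point of $\mbox{Cut}_{SR}(M)\cap(\Omega\setminus D)$ is joined by at least two minimizing geodesics) forces $(x,T(x))\notin \mbox{Cut}_{SR}(M)$. The standard sub-Riemannian facts to be recalled in Section \ref{sect:subRiemgeom} then give that $d_{SR}^2(\cdot,T(x))$ is smooth near $x$, that $\exp_x$ is a local diffeomorphism at $-\tfrac12 d\phi(x)$, and that both $H(x)$ and $Y(x)$ exist with $Y(x)$ invertible.

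To upgrade Alexandrov twice differentiability of $\phi$ into genuine pointwise differentiability of $T$ at such a good $x$, I would use the pair of $c$-concavity inequalities (equality at $x$ with $y=T(x)$, and at $x'$ with $y=T(x')$) to sandwich
$$
d_{SR}^2(x',T(x'))-d_{SR}^2(x,T(x')) \;\le\; \phi(x')-\phi(x) \;\le\; d_{SR}^2(x',T(x))-d_{SR}^2(x,T(x))
$$
for $x'\in\supp(\mu)$ near $x$. The $C^2$-smoothness of the cost at $(x,T(x))$ together with the second-order Taylor expansion of $\phi$ at $x$ pin down $T(x')=T(x)+A(x'-x)+o(|x'-x|)$ for a unique linear map $A$, giving both pointwise differentiability of $T$ at $x$ and, running over the full-measure ``good" set, approximate differentiability $\mu$-a.e. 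On $\Stat^\phi\cap\supp(\mu)$, Theorem \ref{mainthm}(ii) gives $T(x)=x$ and hence $\tilde dT(x)=\mathrm{Id}$.

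To compute $A=\tilde dT(x)$ explicitly, I would differentiate the first-order maximum condition $d\phi(x)=d_z d_{SR}^2(z,T(x))|_{z=x}$ (which is the vanishing of the gradient at $x$ of the map $z\mapsto \phi(z)-d_{SR}^2(z,T(x))$). This yields
$$
\Hess\phi(x) \;=\; 2H(x)+M(x)\cdot \tilde dT(x),
$$
where $M(x):=\partial_y d_z d_{SR}^2(z,y)|_{z=x,\,y=T(x)}$ is the mixed second derivative of the cost. Differentiating the identity $y=\exp_x(-\tfrac12 d_x d_{SR}^2(x,y))$ in $y$ gives $\mathrm{Id}=-\tfrac12 Y(x)M(x)$, hence $M(x)^{-1}=-\tfrac12 Y(x)$, and substitution produces the stated formula $\tilde dT(x)=Y(x)\bigl(H(x)-\tfrac12\Hess\phi(x)\bigr)$.

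Finally, the Jacobian identity \eqref{eqjacident} follows from the area formula applied to the $\mu$-a.e.\ injective (by uniqueness in Theorem \ref{mainthm}) and approximately differentiable map $T$ together with $T_\#\mu=\nu$: testing against Borel functions gives $f(x)=g(T(x))|\det \tilde dT(x)|$ for $\mu$-a.e.\ $x$, and the absolute continuity of $\nu$ rules out $\det \tilde dT(x)=0$ on a $\mu$-positive set (such a set would push forward to a volume-null set carrying positive $\nu$-mass). Specializing to $\Stat^\phi\cap\supp(\mu)$ gives $f(x)=g(x)$. The main obstacle is the second step, namely converting $\mu$-a.e.\ Alexandrov twice differentiability of $\phi$ into the honest pointwise differentiability of $T$ described above; this is precisely where the cut-locus hypothesis earns its keep, by making the cost $C^\infty$ at $(x,T(x))$ and thereby legitimating the implicit-function argument behind the sandwich estimate.
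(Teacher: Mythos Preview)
Your strategy matches the paper's: semiconcavity of $\phi$ plus Alexandrov's theorem gives $\mu$-a.e.\ twice differentiability; the cut-locus hypothesis combined with the uniqueness of minimizing geodesics from Theorem \ref{mainthm} forces $(x,T(x))\notin\mbox{Cut}_{SR}(M)$ for $\mu$-a.e.\ $x\in\Mov^\phi$; Proposition \ref{cut1} then makes $d_{SR}^2$ smooth near $(x,T(x))$; and one differentiates $T(x)=\exp_x(-\tfrac12 d\phi(x))$. The paper compresses your sandwich/implicit-function computation into a direct citation of \cite[Proposition 4.1]{cems01}, differentiating the explicit composition $x\mapsto\exp_x(-\tfrac12 d\phi(x))$ rather than going through the two-sided $c$-concavity inequality; both routes yield the same formula, though the direct composition avoids the delicate point (which you do not address) of why $d_{SR}^2(\cdot,T(x'))$ is smooth at $x$ for $x'$ near $x$ before continuity of $T$ has been established.

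One genuine correction: for the Jacobian identity you claim $\mu$-a.e.\ injectivity of $T$ ``by uniqueness in Theorem \ref{mainthm}'', but uniqueness of the optimal map does not by itself yield injectivity. The paper's argument is that, since in this part of the statement both $\mu$ and $\nu$ are assumed absolutely continuous, Theorem \ref{mainthm} applied with the roles of $\mu$ and $\nu$ reversed produces an optimal map $S$ from $\nu$ to $\mu$, and then $S\circ T=\mathrm{Id}$ $\mu$-a.e.\ and $T\circ S=\mathrm{Id}$ $\nu$-a.e.\ (see \cite[Remark 6.2.11]{agsbook}). This is what supplies the injectivity needed for the area formula, after which \cite[Lemma 5.5.3]{agsbook} delivers \eqref{eqjacident}.
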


\begin{remark}[\bf Differentiability a.e. of the transport map]
\label{rmkdiff}{\rm If we assume that $f\neq g$ $\mu$-a.e., then
by the above theorem we deduce that $T(x)\neq x$ $\mu$-a.e. (or
equivalently $x \not\in \p^c\phi(x)$ $\mu$-a.e.).
Therefore the optimal transport is given by
$$
T(x)=\exp_x(-\textstyle{\frac{1}{2}}d\phi(x)) \qquad \text{$\mu$-a.e.},
$$
and in particular $T$ is differentiable (and not only approximate
differentiable) $\mu$-a.e.}
\end{remark}

\begin{remark}[\bf The Monge-Amp\`ere equation]\label{rmkMA}
{\rm Since the function $z
\mapsto \phi(z)-d_{SR}^2(z,T(x))$ attains a maximum at $T(x)$ for $\mu$-a.e. $x$, it
is not difficult to see that the matrix
$H(x)-\frac{1}{2}\Hess\phi(x)$ (defined in Theorem \ref{thmapproxdiff}) is nonnegative definite $\mu$-a.e.
This fact, together with \eqref{eqjacident}, implies that the function $\phi$
satisfies the \textit{Monge-Amp\`ere} type equation
$$
\det\bigl(H(x)-\textstyle{\frac{1}{2}}\Hess\phi(x)
\bigr)=\frac{f(x)}{|\det(Y(x))|g(T(x))}\qquad \text{for $\mu$-a.e. $x\in\Mov^\phi$.}
$$
In particular, thanks to Remark \ref{rmkdiff},
$$
\det\bigl(H(x)-\textstyle{\frac{1}{2}}\Hess\phi(x)
\bigr)=\frac{f(x)}{|\det(Y(x))|g(T(x))}\qquad \text{$\mu$-a.e.}
$$
provided that $f\neq g$ $\mu$-a.e.
}
\end{remark}

\subsection{The non-compact case}
\label{subsect:noncpt}

Let us briefly show how to remove the compactness assumption on
$\mu$ and $\nu$, and how to relax the hypothesis
$\supp(\mu\times\nu) \subset \O$. We assume $\mu,\nu \in P_2(M)$
(so that Theorem \ref{dualitythm} applies), and that $\mu\times
\nu(\O)=1$. Take an increasing sequence of compact sets $K_\ell
\subset \O$ such that $\cup_{\ell\in\N} K_\ell= \O$. We consider
$$
\psi_\ell(x) := \inf \left\{ d_{SR}^2(x,y) - \phi^c(y) \ \vert \ y
\mbox{ s.t. }  (x,y)\in K_\ell  \right\}.
$$
Since now $\phi^c$ is not a priori continuous (and so
$\p^c\psi_\ell$ is not necessarily closed), we first define
$$
\phi_\ell^c(y) := \inf \left\{ d_{SR}^2(x,y) - \psi_\ell(x) \
\vert \ x \mbox{ s.t. } (x,y)\in K_\ell \right\},
$$
and then consider
$$
\phi_\ell(x) := \inf  \left\{ d_{SR}^2(x,y) - \phi_\ell^c(y) \
\vert \ y \mbox{ s.t. } (x,y)\in K_\ell \right\}.
$$
In this way the following properties holds (see for example the
argument in the proof of \cite[Proposition 5.8]{villaniSF}):
\begin{enumerate}
\item[-] $\phi_\ell$ and $\phi_\ell^c$ are both continuous;
\item[-] $\psi_\ell(x) \geq \phi(x)$ for all $x\in M$; \item[-]
$\phi^c(y) \leq \phi_\ell^c(y)$ for all $y\in \pi_2(K_\ell)$;
\item[-] $\phi_\ell(x) =\psi_\ell(x)$ for all $x \in
\pi_1(K_\ell)$.
\end{enumerate}
This implies that $\p^c\phi \cap K_\ell \subset \p^c\phi_\ell$,
and so
$$
\p^c\phi\cap\O \subset \bigcup_{\ell\in\N} \p^c\phi_\ell.
$$
One can therefore prove (i) and (ii) in Theorem \ref{mainthm} with
$\phi_\ell$ in place of $\phi$, and from this and the hypothesis
$\mu\times \nu(\O)=1$ it is not difficult to deduce that
$(x,\p^c\phi(x)) \cap \Omega$ is a singleton for $\mu$-a.e. $x$
(see the argument in the proof of Theorem \ref{mainthm}). This
proves existence and uniqueness of the optimal transport map.

Although in this case we cannot hope for any semiconcavity result
for $\phi$ (since, as in the non-compact Riemannian case, $\phi$
is just a Borel function), the above argument shows that the graph
of the optimal transport map is contained in the union of
$\p^c\phi_\ell$. Hence, as in \cite[Section 5]{fatfig} one can use
$\p^c\phi_\ell$ to construct the (unique) Wasserstein geodesic
between $\mu$ and $\nu$, and in this way the absolutely continuity
of all measures belonging to the geodesic follows as in the
compactly supported case.

Finally, the fact that the graph of the optimal transport map is
contained in $\cup_{\ell\in\N}\p^c\phi_\ell$ allows also to prove
the approximate differentiability of the transport map and the
Jacobian identity, provided that one replaces the hessian of
$\phi$ with the approximate hessian (we refer the reader to
\cite[Section 3]{figalliSIAM} to see how this argument works in
the Riemannian case).

\section{Examples}
\label{sect:examples}

The aim of the present section is to provide a list of examples
where some of our theorems apply. For each kind of sub-Riemannian
manifold that we present, we provide a regularity result for the
associated squared sub-Riemannian distance function. We leave to
the reader to check in each case which of our theorems holds under
that regularity property.  Before giving examples, we recall that,
if $\Delta$ is a smooth distribution on $M$, a \emph{section} of
$\Delta$ is any smooth vector field $X$ satisfying $X(x) \in
\Delta (x)$ for any $x\in M$. For any smooth vector field  $Z$ on
$M$ and every $x\in M$, we shall denote by $[Z,\Delta] (x),
[\Delta,\Delta](x),$ and $[Z,[\Delta,\Delta]]$ the subspaces of
$T_xM$ given by
$$
[Z,\Delta] (x) := \left\{ [Z,X](x)  \mid X \mbox{ section of } \Delta \right\},
$$
$$
[\Delta, \Delta] (x) := \SPAN \left\{ [X,Y](x) \mid X,Y \mbox{ sections of } \Delta \right\},
$$
$$
[Z,[\Delta,\Delta]] (x) := \SPAN \left\{ [Z,[X,Y]](x) \mid X,Y \mbox{ sections of } \Delta \right\}.
$$

\subsection{Fat distributions}

The distribution $\Delta$ is called \textit{fat} if, for every $x\in M$ and every vector field $X$
on $M$ such that $X(x) \in \Delta(x)\setminus\{0\}$, there holds
$$
T_xM = \Delta(x)+ [X,\Delta](x).
$$
The above condition being very restrictive, there are very few fat
distributions (see \cite{montgomery02}). Fat distributions on
three-dimensional manifolds are the rank-two distributions
$\Delta$ satisfying
$$
T_x M = \SPAN \{f_1(x),f_2(x),[f_1,f_2](x)\} \qquad \forall x \in M,
$$
where $(f_1,f_2)$ is a $2$-tuple of vector fields representing locally the distribution $\Delta$. A classical example of fat distribution in $\R^3$ is given by the distribution spanned by the vector fields
$$
X_1 = \frac{\partial}{\partial x_1} , \qquad X_2 = \frac{\partial}{\partial x_2} + x_1\frac{\partial }{\partial x_3}.
$$
This is the distribution appearing in the Heisenberg group (see
\cite{ar04,bellaiche96,figjul}). It can be shown that, if $\Delta$
is a fat distribution, then any nontrivial (i.e. not constant)
horizontal path with respect to $\Delta$ is nonsingular (see
\cite{cr08,montgomery02,riffordbook}).  As a consequence, Theorems
\ref{THMsc} and \ref{THMlip} yield the following result:

\begin{proposition}
If $\Delta$ is fat on $M$, then the squared sub-Riemannian distance function is locally Lipschitz on $M\times M$ and locally semiconcave on $M\times M \setminus D$.
\end{proposition}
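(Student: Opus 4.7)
The plan is to reduce the proposition to the two abstract regularity theorems \ref{THMsc} and \ref{THMlip} recalled in Section \ref{sect:subRiemgeom}, both of whose hypotheses concern the absence of singular minimizing geodesics. Under the fatness assumption every nontrivial horizontal path is already regular, so these hypotheses are automatic and the two regularity conclusions on $d_{SR}^2$ follow directly.

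The core step is thus to show that, if $\Delta$ is fat on $M$, no nontrivial horizontal path $\gamma:[0,1]\to M$ is singular. I would argue by contradiction using the standard abnormal-multiplier characterization of singular horizontal paths: there would exist an absolutely continuous covector lift $p:[0,1]\to T^*M$, nowhere zero along $\gamma$, with $p(t)|_{\Delta(\gamma(t))}=0$ and $p(t)\bigl([X,Y](\gamma(t))\bigr)=0$ for every section $Y$ of $\Delta$, where $X$ is any local section of $\Delta$ extending $\dot\gamma$. Choose $t_0$ with $\dot\gamma(t_0)\neq 0$; then $X(\gamma(t_0))\in\Delta(\gamma(t_0))\setminus\{0\}$, and fatness at $\gamma(t_0)$ yields
$$
T_{\gamma(t_0)}M \;=\; \Delta(\gamma(t_0)) + [X,\Delta](\gamma(t_0)).
$$
Since $p(t_0)$ annihilates both summands on the right-hand side, it must vanish on all of $T_{\gamma(t_0)}M$, contradicting $p(t_0)\neq 0$. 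In particular, no two distinct points of $M$ can be joined by a singular minimizing geodesic.

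With this input, Theorem \ref{THMsc} applies on every open subset of $M\times M\setminus D$ and gives local semiconcavity of $d_{SR}^2$ there, while Theorem \ref{THMlip} applies on the whole of $M\times M$ and gives local Lipschitz continuity, including across the diagonal. The step I expect to require the most care is the Lipschitz behavior on $D$: the sub-Riemannian distance $d_{SR}$ itself is in general only H\"older continuous near the diagonal (by Mitchell's ball-box theorem), so the genuine Lipschitz regularity of $d_{SR}^2$ here really reflects the stronger ball-box estimate available when the distribution is bracket-generating after a single Lie bracket in every direction. This stronger diagonal estimate is packaged into the hypotheses of Theorem \ref{THMlip}, and verifying its applicability is the only place where the specific geometric content of fatness, beyond the mere absence of nontrivial singular paths, is actually used.
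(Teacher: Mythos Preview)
Your approach is the same as the paper's: it simply records that a fat distribution admits no nontrivial singular horizontal path and then invokes Theorems~\ref{THMsc} and~\ref{THMlip}. Your sketch of the abnormal-multiplier argument is a correct expansion of what the paper leaves to a citation.

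The one place that needs tightening is your final paragraph. The hypothesis of Theorem~\ref{THMlip} is not a ball-box estimate but the absence of Goh paths among minimizing geodesics. Your argument handles the nontrivial case (no nontrivial singular path, hence no nontrivial Goh path), but at the diagonal the relevant minimizing geodesic is the constant path, and a constant path at $x$ is a Goh path precisely when $\Delta(x)+[\Delta,\Delta](x)\neq T_xM$. Fatness rules this out because it implies the two-generating condition $\Delta(x)+[\Delta,\Delta](x)=T_xM$ at every $x$. That observation --- not Mitchell's theorem --- is the ``extra geometric content of fatness'' you allude to, and once you state it explicitly the verification of the hypothesis of Theorem~\ref{THMlip} on all of $M\times M$ is complete.
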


\subsection{Two-generating distributions}

A distribution $\Delta$ is called \textit{two-generating} if
$$
T_xM = \Delta(x) + [\Delta,\Delta] (x) \qquad \forall x\in M.
$$
Any fat distribution is two-generating. Moreover, if the ambient manifold $M$ has dimension three, then any two-generating distribution is fat. The distribution $\Delta$ in $\R^4$ which is spanned by the vector fields
$$
X_1 = \frac{\partial }{\partial x_1}, \qquad X_2 = \frac{\partial}{\partial x_2}, \qquad X_3 =\frac{\partial}{\partial x_3} + x_1\frac{\partial}{\partial x_4},
$$
provides an example of distribution which is two-generating but
not fat. It is easy to see that, if the distribution is
two-generating, then there are no Goh paths (see Paragraph
\ref{subsect:locLip} for the definition of Goh path). As a
consequence, by Theorem \ref{THMlip} we have:

\begin{proposition}
If $\Delta$ is two-generating on $M$, then the squared sub-Riemannian distance function is locally Lipschitz on $M\times M$.
\end{proposition}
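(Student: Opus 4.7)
The plan is to follow the hint given just before the proposition: verify that a two-generating distribution admits no Goh paths, and then invoke Theorem \ref{THMlip} (the general criterion for local Lipschitz regularity of $d_{SR}^2$ stated in the sub-Riemannian geometry section) as a black box. Concretely, I would first recall the definition of a Goh path: a singular horizontal curve $\gamma:[0,1]\to M$ for which there exists an absolutely continuous dual curve $p(t)\in T_{\gamma(t)}^*M\setminus\{0\}$ satisfying, in addition to the Pontryagin-type identity $p(t)(v)=0$ for every $v\in \Delta(\gamma(t))$, the further annihilation condition
$$
p(t)\bigl([X,Y](\gamma(t))\bigr)=0 \qquad \text{for all sections $X,Y$ of $\Delta$ and a.e. $t\in[0,1]$.}
$$
In other words, the adjoint covector $p(t)$ annihilates both $\Delta(\gamma(t))$ and $[\Delta,\Delta](\gamma(t))$.

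The key (and only really substantive) step is then the following linear-algebraic observation. If $\Delta$ is two-generating, then by definition
$$
\Delta(x)+[\Delta,\Delta](x)=T_x M \qquad \forall x \in M.
$$
Hence any covector $p\in T_x^*M$ annihilating both $\Delta(x)$ and $[\Delta,\Delta](x)$ must annihilate the whole tangent space $T_x M$, and is therefore zero. Applying this pointwise along any candidate Goh path $\gamma$, the adjoint covector $p(t)$ would be forced to vanish identically, contradicting the requirement $p(t)\neq 0$ in the definition of Goh path. Consequently, no Goh paths exist on $M$.

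Once the absence of Goh paths is established, the proposition is immediate from Theorem \ref{THMlip}, which (as will be recalled in Section \ref{sect:subRiemgeom}) states that on any complete sub-Riemannian manifold without Goh paths the squared sub-Riemannian distance is locally Lipschitz on $M\times M$. The only potential obstacle is a technical one, namely making sure the Goh condition is formulated consistently with the one used in Theorem \ref{THMlip}; this is just a matter of unwinding definitions, and the essential content of the proof is the one-line annihilation argument above. No computation regarding the cut locus or the exponential map is required.
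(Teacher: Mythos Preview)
Your proposal is correct and follows exactly the same approach as the paper: the paper's justification is precisely the one-line observation (stated just before the proposition) that a two-generating distribution admits no Goh paths, followed by an appeal to Theorem~\ref{THMlip}. Your annihilation argument is the spelled-out version of what the paper calls ``easy to see''; note in particular that it also rules out constant Goh paths, which is what makes Theorem~\ref{THMlip} apply on the diagonal as well.
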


The above result and its consequences in optimal transport are due to Agrachev and Lee (see \cite{al07}).

\subsection{Generic sub-Riemannian structures}

Let $(M,g)$ be a complete Riemannian manifold of dimension $\geq
4$, and  $m\geq 3$ be a positive integer.  Denote by ${\cal D}_m$
the space of rank $m$ distributions  on $M$ endowed with the
Whitney $C^\infty$ topology. Chitour, Jean and Tr\'elat proved
that there exists an open dense subset $\mathcal{O}_m$ of ${\cal
D}_m$ such that every element of $\mathcal{O}_m$ does not admit
nontrivial minimizing singular paths (see \cite{cjt06,cjt}). As a
consequence, we have:

\begin{proposition}
Let $(M,g)$ be a complete Riemannian manifold of dimension $\geq
4$. Then, for any generic distribution of rank $\geq 3$, the
squared sub-Riemannian distance function is locally semiconcave on
$M\times M\setminus D$.
\end{proposition}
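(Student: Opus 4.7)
The plan is to combine two ingredients that are either stated in the excerpt or referenced to the literature. The genericity input, due to Chitour, Jean and Tr\'elat, asserts that inside the space $\mathcal{D}_m$ (with $m\geq 3$ and $\dim M\geq 4$) equipped with the Whitney $C^\infty$ topology, there is an open dense subset $\mathcal{O}_m$ such that every $\Delta\in\mathcal{O}_m$ admits no nontrivial minimizing singular horizontal path. The structural input is Theorem \ref{THMsc} (referenced just above for the fat and two-generating cases), which says that absence of nontrivial minimizing singular paths between distinct points of an open set implies local semiconcavity of $d_{SR}^2$ on the corresponding subset of $M\times M\setminus D$.

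Concretely, I would first fix a generic $\Delta\in\mathcal{O}_m$ so that, together with the Riemannian metric $g$, the triple $(M,\Delta,g)$ forms a complete sub-Riemannian manifold. The completeness hypothesis is inherited from the completeness of $(M,g)$ as pointed out in the remark following the Chow--Rashevsky discussion, so every pair of distinct points of $M$ is joined by at least one minimizing horizontal path; by genericity, none of these minimizers can be singular, so each of them is the projection of a normal extremal. This places us precisely within the scope of Theorem \ref{THMsc}, whose application yields local semiconcavity of $d_{SR}^2$ on $M\times M\setminus D$. Since $\mathcal{O}_m$ is open and dense, the conclusion holds for a generic choice of rank-$m$ distribution, which is exactly what is claimed.

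Strictly speaking there is no hard step: the work lies elsewhere, namely in establishing the two black-box results used here. The only point that deserves some care in the write-up is checking that the Chitour--Jean--Tr\'elat genericity statement, as formulated in \cite{cjt06,cjt}, indeed produces the hypothesis needed by Theorem \ref{THMsc} (no nontrivial minimizing singular path between distinct points, rather than merely the absence of abnormal minimizers of some more restricted class); given the formulations recalled in the excerpt, this verification is immediate. Thus the proof is essentially a one-line combination, and the main obstacle would only arise if one wanted to make the dependence on $m$ and on the dimension of $M$ more explicit, which is unnecessary for the present statement.
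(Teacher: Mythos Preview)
Your proposal is correct and matches the paper's own argument: the proposition is stated as an immediate consequence of the Chitour--Jean--Tr\'elat genericity result combined with Theorem~\ref{THMsc}, with no further proof given. Your additional remarks about completeness and about what exactly the genericity statement provides are accurate and simply make explicit what the paper leaves implicit.
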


This result implies in particular that, for generic sub-Riemannian manifolds, we have existence and uniqueness of optimal transport maps,
and absolute continuity of Wasserstein geodesics.

\subsection{Nonholonomic distributions on three-dimensional manifolds}

Assume that $M$ has dimension $3$ and that $\Delta$ is a nonholonomic rank-two distribution on $M$, and define
$$
\Sigma_{\Delta} := \left\{ x \in M \mid \Delta (x) + [\Delta,\Delta] (x) \neq \R^3\right\}.
$$
The set $\Sigma_{\Delta}$ is called the \textit{singular set} or the \textit{Martinet set} of $\Delta$. As an example, take the nonholonomic distribution  $\Delta$ in $\R^3$ which is spanned by the vector fields
$$
f_1 = \frac{\partial}{\partial x_1} , \qquad f_2 = \frac{\partial}{\partial x_2} + x_1^2 \frac{\partial }{\partial x_3}.
$$
It is easy to show that the singular set of $\Delta$ is the plane $\{x_1=0\}$. This distribution is often called the \textit{Martinet distribution}, and $\Sigma_{\Delta}$ the \textit{Martinet surface}. The singular horizontal paths of $\Delta$ correspond to the horizontal paths which are included in $\Sigma_{\Delta}$. This means that necessarily any singular horizontal path is, up to reparameterization,
a restriction of an arc of the form $t\mapsto (0,t,\bar{x}_3) \in \R^3$ with $\bar{x}_3\in \R$. This kind of result holds for any rank-two distribution in dimension three (we postpone its proof to Appendix B):

\begin{proposition}
\label{propdim3} Let $\Delta$ be a nonholonomic distribution on a
three-dimensional manifold. Then $\Sigma_{\Delta}$ is a closed
subset of $M$ which is countably $2$-rectifiable. Moreover a
nontrivial horizontal path $\gamma :[0,1] \rightarrow M$ is
singular if and only if it is included in $\Sigma_{\Delta}$.
\end{proposition}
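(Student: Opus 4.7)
The plan is to handle the three assertions in sequence. First, $\Sigma_\Delta$ is closed because, locally, $\Delta+[\Delta,\Delta]$ is the linear span of the three smooth vector fields $f_1,f_2,[f_1,f_2]$ (with $(f_1,f_2)$ any local frame of $\Delta$), so having full rank $3$ at a point is the non-vanishing of a smooth $3\times 3$ minor, which is an open condition.

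Next I would establish the countable $2$-rectifiability of $\Sigma_\Delta$. Working in a local chart, fix a frame $(f_1,f_2)$ of $\Delta$ and a smooth volume form $\omega$, and set $\phi:=\omega(f_1,f_2,[f_1,f_2])$, so that locally $\Sigma_\Delta=\{\phi=0\}$. The decisive input is that the H\"ormander condition forces $\phi$ to be non-flat at every one of its zeros: choosing adapted coordinates $(x_1,x_2,x_3)$ near $x_0\in\Sigma_\Delta$ in which $f_1=\partial_1$ and $f_2=b\,\partial_2+c\,\partial_3$ with $b(x_0)=1$, $c(x_0)=0$, one computes $\phi=b\,\partial_1c-c\,\partial_1b$, and a recursive check shows that the transverse component of the order-$k$ iterated bracket $[f_1,[f_1,\dots,[f_1,f_2]\cdots]]$ at $x_0$ equals $\partial_1^{\,k-1}\phi(x_0)$ modulo terms already forced to vanish by the previous $k-1$ derivatives; were $\phi$ flat at $x_0$, every iterated bracket of $f_1,f_2$ would lie in $\Delta(x_0)$, contradicting nonholonomy. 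Given a finite order of vanishing $k=k(x_0)$ at every $x_0\in\Sigma_\Delta$, I would apply the Malgrange preparation theorem in suitable coordinates to write $\phi=q\cdot P$ locally, with $q$ smooth and nowhere vanishing and $P$ a distinguished polynomial of degree $k$ in one variable with smooth coefficients in the remaining two. The zero locus $\{P=0\}$ is a union of at most $k$ continuous branches over an open set of $\R^2$, and a Puiseux-type reparametrization near the finitely many values where branches collide turns each branch into a Lipschitz image of a planar open set. Summing over a countable chart cover yields countable $2$-rectifiability of $\Sigma_\Delta$.

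For the characterization of singular horizontal paths I would invoke the Lagrange multiplier criterion recalled in Section~\ref{sect:subRiemgeom}: $\gamma$ with $L^2$-control $(u_1,u_2)$ is singular iff there exists a nowhere-vanishing absolutely continuous covector $p:[0,1]\to T^*M$ along $\gamma$ with $p\cdot f_i(\gamma)=0$ for $i=1,2$ and $\dot p=-\sum_j u_j\,(Df_j(\gamma))^\top p$. For the forward implication, differentiate $p\cdot f_i(\gamma)=0$ in $t$ and substitute the dynamics $\dot\gamma=\sum_j u_j f_j(\gamma)$ and the adjoint equation; using $[f_j,f_i]=Df_i\,f_j-Df_j\,f_i$ the result collapses to $u_{3-i}(t)\,p(t)\cdot[f_1,f_2](\gamma(t))=0$ for $i=1,2$. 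If $\gamma$ is nontrivial then $(u_1,u_2)\neq 0$ a.e., so $p(t)\cdot[f_1,f_2](\gamma(t))=0$; the nonzero covector $p(t)$ thus annihilates $\Delta(\gamma(t))+[\Delta,\Delta](\gamma(t))$, which is therefore proper for a.e.\ $t$, i.e.\ $\gamma(t)\in\Sigma_\Delta$ for a.e.\ $t$, and by continuity for every $t$. Conversely, along a horizontal $\gamma\subset\Sigma_\Delta$ the annihilator $\Delta(\gamma(t))^\perp\subset T^*_{\gamma(t)}M$ is a line bundle that can be trivialized by a smooth nowhere-vanishing section $p_0$; setting $p:=\lambda p_0$ makes $p\cdot f_i(\gamma)=0$ automatic, and the adjoint equation combined with $p_0\cdot[f_1,f_2](\gamma)\equiv 0$ on $\Sigma_\Delta$ reduces to a linear scalar ODE for $\lambda$ with a nowhere-zero solution. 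This provides a nontrivial abnormal lift, and so $\gamma$ is singular.

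The technical crux is the rectifiability: the reduction ``H\"ormander $\Rightarrow\phi$ non-flat at every zero'' is an explicit but delicate derivative-counting, and the passage from non-flatness to a countable $2$-rectifiable decomposition in the smooth (non-analytic) category forces one to use Malgrange preparation plus a branch-by-branch Lipschitz parametrization, since no Lojasiewicz inequality is directly available. The singular-path characterization, by contrast, is a one-line Lie bracket computation on top of the standard Pontryagin/endpoint-map duality.
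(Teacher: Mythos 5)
Your argument for the singular-path characterization is essentially the paper's: the forward direction is the computation $\tfrac{d}{dt}\bigl[p\cdot f_i(\gamma)\bigr]=\mp u_{3-i}\,p\cdot[f_1,f_2](\gamma)$ together with the observation that $p$ annihilating $\Delta+[\Delta,\Delta]$ forces $\gamma\in\Sigma_\Delta$, and the converse integrates the adjoint equation from an initial covector in $\Delta^\perp$ and shows that $h_i:=p\cdot f_i(\gamma)$ solve a homogeneous linear ODE (using $[f_1,f_2]\in\Delta$ along $\gamma\subset\Sigma_\Delta$), hence vanish identically. One inaccuracy you should repair: for a nontrivial horizontal $\gamma$ one cannot assert $(u_1,u_2)\neq 0$ a.e.; the paper works near a $\bar t$ in whose every neighborhood $u$ is not identically zero, obtains $p(\bar t)\cdot[f_1,f_2](\gamma(\bar t))=0$ by continuity of $t\mapsto p(t)\cdot[f_1,f_2](\gamma(t))$, and the remaining $t$ are handled because off the essential support of $u$ the curve is locally constant.

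The rectifiability part is where you diverge, and the gap is genuine. The paper obtains it as a corollary of Proposition~\ref{propcodim1}: in adapted coordinates $f_i=\partial_i+\alpha_i\partial_n$, every iterated bracket $f_I$ of length $\geq 2$ has the form $g_I\,\partial_n$, and H\"ormander gives an $r$ such that the $g_I$ with $\mbox{length}(I)\leq r$ have no common zero. Stratifying $\Sigma_\Delta=\bigcup_{k}(A_k\setminus A_{k+1})$ by the first order $k$ at which some bracket becomes transverse, the recursion $g_J=\partial_{j_1}g_I+\alpha_{j_1}\,\partial_n g_I$ exhibits, at each point of $A_k\setminus A_{k+1}$, a single $g_I$ with $g_I=0$ and $\nabla g_I\neq 0$; the Implicit Function Theorem then covers each stratum by countably many smooth hypersurfaces, with no preparation theorem and no analyticity required.

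You instead reduce to the single function $\phi$ with $\Sigma_\Delta=\{\phi=0\}$, argue that H\"ormander forces $\phi$ to be non-flat, and invoke Malgrange preparation plus ``a Puiseux-type reparametrization.'' Two problems. First, the ``recursive check'' is under-specified: as written you only relate $\partial_1^{k-1}\phi$ to $\operatorname{ad}_{f_1}^{k-1}(f_2)$, while H\"ormander allows arbitrary iterated brackets such as $[f_2,[f_1,f_2]]$ and nested mixtures; you would need to show that the transverse component of \emph{every} iterated bracket is a smooth-coefficient combination of partial derivatives $\partial^\alpha\phi$, so flatness kills them all. This is doable but is not a one-line induction. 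Second, and more seriously, the passage from Malgrange preparation to rectifiability does not work as stated: the real roots of a Weierstrass polynomial with merely smooth (non-analytic) coefficients are not Lipschitz functions of the remaining two variables, the discriminant set where branches collide is typically positive-dimensional rather than ``finitely many values,'' and there is no Puiseux theorem outside the analytic category. Whether the zero set of a smooth, everywhere non-flat function is automatically countably $(n-1)$-rectifiable is itself a delicate question, and your sketch does not answer it. The paper's bracket-stratification plus IFT avoids the issue entirely and is the route you should adopt.
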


Proposition \ref{propdim3} implies that for any pair $(x,y) \in M\times M$ (with $x\neq y$) such that $x$ or $y$ does not belong to $\Sigma_{\Delta}$, any sub-Riemannian minimizing  geodesic between $x$ and $y$ is nonsingular. As a consequence, thanks to Theorems \ref{THMsc} and \ref{THMlip}, the following result holds:

\begin{proposition}
\label{propdim3_2}
Let $\Delta$ be a nonholonomic distribution on a three-dimensional manifold. The squared sub-Riemannian distance function is locally Lipschitz on $M\times M \setminus \left( \Sigma_{\Delta} \times \Sigma_{\Delta}\right)$ and locally semiconcave on $M\times M \setminus \left( D \cup \Sigma_{\Delta} \times \Sigma_{\Delta}\right)$.
\end{proposition}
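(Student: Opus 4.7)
The plan is to deduce Proposition \ref{propdim3_2} from the general regularity criteria (Theorems \ref{THMsc} and \ref{THMlip}) by verifying, in a neighborhood of each point of the relevant open set, the absence of nontrivial sub-Riemannian minimizing geodesics that are singular. Proposition \ref{propdim3} is precisely the bridge: in dimension three, a nontrivial horizontal path is singular if and only if its image lies in $\Sigma_\Delta$.

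The first step is the elementary observation that $\Sigma_\Delta$ is closed in $M$. Indeed, if $(f_1,f_2)$ is a local frame of $\Delta$ near a point $x_0$, then $x \in \Sigma_\Delta$ amounts to the vanishing of the smooth function $x \mapsto \det\bigl(f_1(x),f_2(x),[f_1,f_2](x)\bigr)$, so $\Sigma_\Delta$ is closed and its complement is open. Hence, given $(x_0,y_0) \in M\times M \setminus (\Sigma_\Delta\times\Sigma_\Delta)$, by symmetry of $d_{SR}$ I may assume $x_0 \notin \Sigma_\Delta$, and choose an open neighborhood $U$ of $x_0$ with $\overline{U}\cap\Sigma_\Delta=\emptyset$, together with a precompact neighborhood $V$ of $y_0$.

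Next I would argue that every sub-Riemannian minimizing geodesic $\gamma$ between any pair of distinct points $(x,y) \in U\times V$ is nonsingular: since $x=\gamma(0)\in U$ lies outside the closed set $\Sigma_\Delta$, the image of $\gamma$ cannot be contained in $\Sigma_\Delta$, so by Proposition \ref{propdim3} the path $\gamma$ is regular. Feeding this into Theorem \ref{THMlip} on $U\times V$ yields the local Lipschitz continuity of $d_{SR}^2$ in a neighborhood of $(x_0,y_0)$; covering $M\times M \setminus (\Sigma_\Delta\times\Sigma_\Delta)$ by such neighborhoods then gives the first part of the statement. The semiconcavity assertion follows in exactly the same way by applying Theorem \ref{THMsc} on $(U\times V)\setminus D$, where the removal of the diagonal ensures that the ``distinct endpoints'' hypothesis is automatically met.

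The main obstacle is conceptual rather than technical: the whole argument hinges on Proposition \ref{propdim3}, whose nontrivial content is the characterization of singular horizontal paths in a three-dimensional manifold as those whose image is contained in the Martinet set. Once that proposition is available (it is proved in Appendix B), the passage to the regularity of $d_{SR}^2$ is only a neighborhood argument. A secondary subtlety, namely that Theorems \ref{THMsc} and \ref{THMlip} require the nonexistence of singular minimizers uniformly over a neighborhood and not merely at the base pair $(x_0,y_0)$, is taken care of by the closedness of $\Sigma_\Delta$: shrinking $U$ so that $\overline{U} \cap \Sigma_\Delta = \emptyset$ forces the ``no singular minimizer'' property to persist for every nearby source point.
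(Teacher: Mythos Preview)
Your proof is correct and follows the same approach as the paper: the paper simply observes that Proposition~\ref{propdim3} forces any sub-Riemannian minimizing geodesic between distinct points $(x,y)$ with $x\notin\Sigma_\Delta$ or $y\notin\Sigma_\Delta$ to be nonsingular, and then invokes Theorems~\ref{THMsc} and~\ref{THMlip}. Your write-up makes the neighborhood argument and the closedness of $\Sigma_\Delta$ explicit, which the paper leaves implicit; the only point you might add for completeness is that a nonsingular path is in particular not a Goh path (the hypothesis of Theorem~\ref{THMlip}), and that constant paths at points of $U$ are not Goh either since $\Delta+[\Delta,\Delta]=T_xM$ there.
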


We observe that, since $\Sigma_{\Delta}$ is countably $2$-rectifiable, for any pair of measures $\mu, \nu \in P_c(M)$ such that $\mu$ gives no measure to countably $2$-rectifiable sets, the conclusions of Theorem \ref{thmrectif} hold.

\subsection{Medium-fat distributions}

The distribution $\Delta$ is called \textit{medium-fat}  if, for every $x\in M$ and every vector field $X$
on $M$ such that $X(x)\in \Delta(x)\setminus\{0\}$, there holds
$$
T_xM = \Delta(x)+ [\Delta,\Delta](x) + [X,[\Delta,\Delta]](x).
$$
Any two-generating distribution is medium-fat. An example of medium-fat distribution which is not two-generating is given by the rank-three distribution in $\R^4$ which is spanned by the vector  vector fields
$$
f_1 = \frac{\partial}{\partial x_1} , \qquad f_2 = \frac{\partial}{\partial x_2}, \qquad f_3=  \frac{\partial }{\partial x_3} + (x_1+x_2+x_3)^2  \frac{\partial}{\partial x_4}.
$$
Medium-fat distribution were introduced by Agrachev and Sarychev in \cite{as99} (we refer the interested reader to that paper for a detailed study of this kind of distributions). It can easily be shown that medium-fat distributions do not admit nontrivial Goh paths. As a consequence, Theorem \ref{THMlip} yields:

\begin{proposition}
Assume that $\Delta$ is medium-fat. Then the squared sub-Riemannian distance function is locally Lipschitz on $M\times M \setminus D$.
\end{proposition}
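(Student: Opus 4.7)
The plan is to reduce the proposition to Theorem \ref{THMlip} by verifying its hypothesis, namely the absence of nontrivial Goh paths. Recall that a Goh path is a nontrivial singular horizontal curve $\gamma:[0,1]\to M$ admitting an abnormal lift $p(\cdot)$ in $T^*M \setminus \{0\}$ (with $\pi\circ p = \gamma$) that annihilates not only the distribution itself but also its first bracket; that is,
\[
p(t)\cdot v = 0 \quad \forall v\in \Delta(\gamma(t)), \qquad p(t)\cdot w = 0 \quad \forall w\in [\Delta,\Delta](\gamma(t)),
\]
for a.e.\ $t\in[0,1]$. I will show that, under the medium-fat condition, no such curve can exist.

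Assume by contradiction that $\gamma$ is a nontrivial Goh path with lift $p(\cdot)$, and fix $t_0$ with $\dot\gamma(t_0)\neq 0$; by continuity of the problem, $\dot\gamma(t)\ne 0$ on an open interval around $t_0$. After reparametrization one can locally extend $\dot\gamma$ to a smooth section $X$ of $\Delta$ on a neighborhood of $\gamma(t_0)$ such that $X(\gamma(t))=\dot\gamma(t)$. The crucial observation is that the Goh identity $p(t)\cdot [Y,Z](\gamma(t))=0$, valid for all sections $Y,Z$ of $\Delta$, can be differentiated with respect to $t$: using that $\dot\gamma = X$ along the curve and the standard Poisson-bracket identity $\tfrac{d}{dt}\bigl(p\cdot V\bigr) = p \cdot [\dot\gamma,V]$ (which expresses how the evaluation of the cotangent lift against a vector field evolves along the flow of $X$, combined with the adjoint equation satisfied by $p$ for a singular curve), one obtains
\[
p(t)\cdot \bigl[X,[Y,Z]\bigr](\gamma(t)) = 0 \qquad \forall Y,Z \text{ sections of } \Delta.
\]
Hence $p(t)$ also annihilates $[X,[\Delta,\Delta]](\gamma(t))$.

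Now invoke the medium-fat hypothesis applied to the section $X$ (which satisfies $X(\gamma(t))=\dot\gamma(t)\in\Delta(\gamma(t))\setminus\{0\}$):
\[
T_{\gamma(t)} M = \Delta(\gamma(t)) + [\Delta,\Delta](\gamma(t)) + \bigl[X,[\Delta,\Delta]\bigr](\gamma(t)).
\]
Combining the Goh conditions with the identity just derived, $p(t)$ annihilates each of the three summands on the right-hand side, so $p(t)=0$. This contradicts the fact that an abnormal lift must be nowhere zero.

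Therefore, medium-fat distributions admit no nontrivial Goh path, and Theorem \ref{THMlip} immediately yields that $d_{SR}^2$ is locally Lipschitz on $M\times M\setminus D$. The only nontrivial step is the differentiation of the Goh identity along $\gamma$; the rest is the direct algebraic consequence of the medium-fat assumption on the pointwise span. I expect the main technical care to lie in making precise the derivative formula $\tfrac{d}{dt}(p\cdot V) = p\cdot [\dot\gamma,V]$ when $V=[Y,Z]$, which in our setting holds because the singular extremal equation for $p$ pairs with the Lie bracket in exactly this way.
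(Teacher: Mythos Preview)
Your approach is correct and is exactly the one indicated in the paper: the paper simply asserts that ``medium-fat distributions do not admit nontrivial Goh paths'' and then invokes Theorem~\ref{THMlip}, while you supply the standard argument behind that assertion. Two small technical clean-ups: since a general horizontal path has $\dot\gamma$ only in $L^2$, the statement ``$\dot\gamma(t)\neq 0$ on an open interval'' need not hold---but you do not need it; it suffices to pick a single $t_0$ at which the differentiation formula holds a.e.\ and $u(t_0)\neq 0$, and then take $X:=\sum_i u_i(t_0)f_i$ (a genuine smooth section of $\Delta$ with $X(\gamma(t_0))=\dot\gamma(t_0)$), rather than trying to extend $\dot\gamma$ along the curve. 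With that choice the identity $\tfrac{d}{dt}\bigl(p\cdot V\bigr)=\sum_i u_i\,p\cdot[f_i,V]$ (which follows from (\ref{abnormal1}) exactly as in the computation in Appendix~B) gives $p(t_0)\cdot[X,[Y,Z]](\gamma(t_0))=0$, and the medium-fat condition---which is required to hold for \emph{every} extension $X$ with $X(x)\in\Delta(x)\setminus\{0\}$---finishes the argument.
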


Let us moreover observe that, given a medium-fat distribution, it
can be shown that for a generic smooth complete Riemannian metric
on $M$ the distribution does not admit nontrivial singular
sub-Riemannian  minimizing geodesics (see \cite{cjt06,cjt}). As a
consequence, we have:

\begin{proposition}
Let $\Delta$ be a medium-fat distribution on $M$. Then, for ``generic"  Riemannian metrics, the squared sub-Riemannian distance function is locally semiconcave on $M\times M \setminus D$.
 \end{proposition}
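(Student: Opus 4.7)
The proof is essentially a combination of a genericity result from the literature with the semiconcavity criterion (Theorem \ref{THMsc}) that will be established in Section \ref{sect:subRiemgeom}.

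My plan is as follows. First, I would fix a medium-fat distribution $\Delta$ on $M$ and consider the space of smooth complete Riemannian metrics on $M$ endowed with the Whitney $C^\infty$ topology. The core step is to invoke the genericity theorem of Chitour--Jean--Tr\'elat \cite{cjt06,cjt}: since $\Delta$ is medium-fat (in particular, the growth vector assumptions required in that work are satisfied), there exists a residual subset $\mathcal{G}$ in the space of complete Riemannian metrics such that, for every $g \in \mathcal{G}$, the sub-Riemannian manifold $(M,\Delta,g)$ admits no nontrivial singular minimizing geodesic.

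Next, I would fix such a generic metric $g \in \mathcal{G}$. By definition, for every pair of distinct points $(x,y) \in M \times M \setminus D$, any sub-Riemannian minimizing geodesic from $x$ to $y$ is nonsingular (regular). At this point I would apply Theorem \ref{THMsc} from Section \ref{sect:subRiemgeom}, which asserts that $d_{SR}^2$ is locally semiconcave on any open subset of $M\times M \setminus D$ on which all pairs of distinct points are joined only by regular minimizing geodesics. Taking this open set to be all of $M\times M \setminus D$ yields the claim.

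The only conceptual point to check is that the genericity result of \cite{cjt06,cjt}, originally formulated as genericity of the distribution for a fixed metric, carries over to the symmetric statement (genericity of the metric for a fixed medium-fat $\Delta$). This is the content of the parenthetical citation in the paragraph preceding the statement, and I would simply quote it. I do not expect any serious obstacle: the nontrivial analytic work is already encapsulated in \cite{cjt06,cjt} (absence of singular minimizers) and in Theorem \ref{THMsc} (semiconcavity from regularity of minimizers), so the proposition is a clean juxtaposition of these two ingredients.
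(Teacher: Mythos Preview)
Your proposal is correct and matches the paper's own approach exactly: the paper does not give a standalone proof but states the proposition as an immediate consequence of the Chitour--Jean--Tr\'elat genericity result (no nontrivial singular minimizing geodesics for a generic metric on a medium-fat distribution) combined with Theorem~\ref{THMsc}. There is nothing to add.
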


Notice that, since two-generating distributions are medium-fat, the latter result holds for two-generating distributions.

\subsection{Codimension-one nonholonomic distributions}

Let $M$ have dimension $n$, and $\Delta$ be a nonholonomic distribution of rank $n-1$.
As in the case of nonholonomic distributions on three-dimensional manifolds, we can define the singular set associated to the distribution as
$$
\Sigma_{\Delta} := \left\{ x \in M \mid \Delta(x) + [\Delta,\Delta](x) \neq T_x M \right\}.
$$
The following result holds (we postpone its proof to Appendix B):

\begin{proposition}
\label{propcodim1} If $\Delta$ is a nonholonomic distribution of
rank $n-1$, then the set $\Sigma_{\Delta}$ is a closed subset of
$M$ which is countably $(n-1)$-rectifiable. Moreover any Goh path
is contained in $\Sigma_{\Delta}$.
\end{proposition}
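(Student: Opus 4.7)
I separate the three assertions: closedness, containment of Goh paths, and countable $(n-1)$-rectifiability. The last one is the main technical point.

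Fix $x_0 \in M$. Since $\Delta$ has codimension one, I pick a smooth nonvanishing $1$-form $\omega$ on a neighborhood $U$ of $x_0$ with $\Delta|_U = \ker\omega$. The Cartan formula $d\omega(X,Y) = -\omega([X,Y])$ (valid for sections $X, Y$ of $\ker\omega$) shows that $[\Delta,\Delta](x) \subset \Delta(x)$ precisely when $d\omega|_{\Delta(x)} = 0$, equivalently $(\omega \wedge d\omega)(x)=0$. Thus $\Sigma_\Delta \cap U$ is the zero set of the smooth $3$-form $\omega\wedge d\omega$, hence closed, giving closedness of $\Sigma_\Delta$ globally. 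For the Goh-path inclusion, let $\gamma$ be a Goh path with abnormal dual $\psi:[0,1]\to T^*M$: so $\psi(t)\in\Delta(\gamma(t))^\perp\setminus\{0\}$ and $\psi(t)\cdot[f_i,f_j](\gamma(t))=0$ for a local frame $(f_1,\ldots,f_{n-1})$ of $\Delta$. Since $\Delta^\perp$ is a line bundle, one can write $\psi(t)=\lambda(t)\,\omega_{\gamma(t)}$ with $\lambda(t)\neq 0$, so the Goh condition reads $\omega([f_i,f_j])(\gamma(t))=0$ for all $i<j$ and all $t$, i.e.\ $d\omega|_{\Delta(\gamma(t))}=0$; hence $\gamma(t)\in\Sigma_\Delta$.

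For rectifiability I would work in a coordinate chart on $U$ with a frame $(f_1,\ldots,f_{n-1})$ of $\Delta$, setting $c_{ij}:=d\omega(f_i,f_j)$ so that $\Sigma_\Delta\cap U=\{x\in U : c_{ij}(x)=0\ \forall i<j\}$. Define a decreasing family of closed subsets
\[
\Sigma^{(p)}:=\{x\in U : \partial^\alpha c_{ij}(x)=0,\ \forall\, i<j,\ \forall\, |\alpha|\leq p\},\qquad p\geq 0,
\]
so that $\Sigma^{(0)}=\Sigma_\Delta\cap U$. The key identity needed is: for every iterated bracket $L$ in $f_1,\ldots,f_{n-1}$ of length $k\geq 2$, the function $\omega(L)$ on $U$ is a smooth linear combination of the partial derivatives $\partial^\alpha c_{ij}$ with $|\alpha|\leq k-2$. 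This is proved by induction on $k$ using the formula $\omega([f_i,L])=f_i(\omega(L))-d\omega(f_i,L)$: the first term contributes one additional derivative order, while the second is handled by decomposing $L=\sum_l a^l f_l+b e$ in an adapted frame $(f_1,\ldots,f_{n-1},e)$ with $e$ transverse to $\Delta$, so that $d\omega(f_i,L)=\sum_l a^l c_{il}+b\,d\omega(f_i,e)$ where $b=\omega(L)/\omega(e)$ is itself (by induction) a smooth combination of the $\partial^\alpha c_{ij}$ with $|\alpha|\leq k-2$. Consequently, if $x\in\bigcap_{p\geq 0}\Sigma^{(p)}$ then $\omega(L)(x)=0$ for every iterated bracket $L$, contradicting H\"ormander's condition at $x$; hence $\bigcap_p\Sigma^{(p)}=\emptyset$ and $\Sigma_\Delta\cap U=\bigsqcup_{p\geq 0}(\Sigma^{(p)}\setminus\Sigma^{(p+1)})$.

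For $x\in\Sigma^{(p)}\setminus\Sigma^{(p+1)}$ there exist $i<j$, a multi-index $\beta$ with $|\beta|=p$, and an index $l$ such that $h:=\partial^\beta c_{ij}$ satisfies $h(x)=0$ but $\partial_l h(x)\neq 0$; the implicit function theorem then provides a smooth hypersurface $\{h=0\}$ near $x$ containing $\Sigma^{(p)}$, hence containing $\Sigma_\Delta$, locally. Covering $M$ by countably many such charts and each stratum $\Sigma^{(p)}\setminus\Sigma^{(p+1)}$ by countably many such neighborhoods (parameterized by the finite data $(i,j,\beta,l)$) expresses $\Sigma_\Delta$ as a countable union of subsets of smooth $(n-1)$-submanifolds, which is the countable $(n-1)$-rectifiability sought. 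The main obstacle is the inductive identity for $\omega(L)$: one must verify that every coefficient multiplying a transverse contribution in the adapted-frame decomposition of iterated brackets reduces, modulo smooth nonvanishing factors, to a derivative $\partial^\alpha c_{ij}$ of the right order. This is where the codimension-one assumption is essential, since the transverse direction is one-dimensional and uniquely determined by $\omega$ up to scaling; it is also the codimension-one analogue of the three-dimensional stratification used to prove Proposition~\ref{propdim3}.
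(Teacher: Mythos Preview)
Your argument is correct and follows the same overall strategy as the paper's (stratify $\Sigma_\Delta$, cover each stratum by smooth hypersurfaces via the implicit function theorem, and invoke H\"ormander's condition to make the stratification exhaust $\Sigma_\Delta$), but you choose a different set of stratifying functions. The paper works in adapted coordinates $f_i=\partial_i+\alpha_i\partial_n$, sets $f_I=g_I\,\partial_n$ for each iterated bracket, and stratifies by $A_k=\{g_I=0:\ \mbox{length}(I)\leq k\}$; termination of this filtration is then immediate from H\"ormander, while the nontrivial computation is that $x\in A_k\setminus A_{k+1}$ forces some first partial of some $g_I$ (with $\mbox{length}(I)=k$) to be nonzero at $x$, so $\{g_I=0\}$ is a hypersurface there. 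You instead stratify by the jets $\Sigma^{(p)}=\{\partial^\alpha c_{ij}=0:\ |\alpha|\leq p\}$, so the implicit-function step becomes automatic by construction, and the nontrivial content shifts to your inductive identity expressing each $\omega(L)$ as a smooth combination of the $\partial^\alpha c_{ij}$, which is exactly what makes the filtration terminate. The two arguments are in this sense dual; yours has the advantage of a more coordinate-free packaging via $\omega$ (and your treatments of closedness through $\omega\wedge d\omega$ and of the Goh condition through $\psi(t)=\lambda(t)\,\omega_{\gamma(t)}$ are more explicit than the paper's, which dismisses both as obvious). One small slip: the hypersurface $\{h=0\}$ you produce near $x\in\Sigma^{(p)}\setminus\Sigma^{(p+1)}$ contains $\Sigma^{(p)}$ locally, not $\Sigma_\Delta=\Sigma^{(0)}\supset\Sigma^{(p)}$; but since you then correctly cover each stratum $\Sigma^{(p)}\setminus\Sigma^{(p+1)}$ separately, the conclusion is unaffected.
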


From Theorem \ref{THMlip}, we have:

\begin{proposition}
The squared sub-Riemannian distance function is locally Lipschitz on $M\times M \setminus \left( \Sigma_{\Delta} \times \Sigma_{\Delta}\right)$.
\end{proposition}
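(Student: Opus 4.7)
The strategy is to reduce the statement to the general Lipschitz criterion encoded in Theorem \ref{THMlip}, whose hypothesis is the absence of Goh minimizing geodesics between distinct points of the open set under consideration. The work therefore splits into (a) checking that $M \times M \setminus (\Sigma_\Delta \times \Sigma_\Delta)$ is open, and (b) verifying the no-Goh-path condition on every relatively compact piece of it.

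For (a), observe that $\Sigma_\Delta$ is closed in $M$ by Proposition \ref{propcodim1}, so $\Sigma_\Delta \times \Sigma_\Delta$ is closed in $M \times M$, and its complement is open. For (b), fix $(x,y)$ in this complement; by symmetry of the roles of the two factors (and of the conclusion) we may assume $x \notin \Sigma_\Delta$. Using closedness of $\Sigma_\Delta$, pick a compact neighborhood $K_x \subset M \setminus \Sigma_\Delta$ of $x$ and any compact neighborhood $K_y$ of $y$; then $K_x \times K_y$ is a compact neighborhood of $(x,y)$ contained in $M \times M \setminus (\Sigma_\Delta \times \Sigma_\Delta)$. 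If some pair $(x',y') \in K_x \times K_y$ with $x' \neq y'$ were joined by a minimizing Goh geodesic $\gamma$, then by the second assertion of Proposition \ref{propcodim1} we would have $\gamma([0,1]) \subset \Sigma_\Delta$, forcing in particular $x' = \gamma(0) \in \Sigma_\Delta$, contradicting $K_x \cap \Sigma_\Delta = \emptyset$. Hence no minimizing geodesic between two distinct points of $K_x \times K_y$ is a Goh path.

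Applying Theorem \ref{THMlip} on the open set $\mathrm{int}(K_x \times K_y)$ (or more simply on an open neighborhood of $(x,y)$ inside $K_x \times K_y$) yields local Lipschitz continuity of $d_{SR}^2$ near $(x,y)$. Since $(x,y)$ was arbitrary in $M \times M \setminus (\Sigma_\Delta \times \Sigma_\Delta)$, this proves the proposition. The only mildly delicate point is that Theorem \ref{THMlip} requires Goh-freeness between \emph{distinct} points, and near the diagonal one cannot rule out Goh paths reduced to a point; but constant paths are trivial and do not affect the Lipschitz estimate, which is the reason the statement allows the open set to meet the diagonal. No further ingredient beyond Proposition \ref{propcodim1} and Theorem \ref{THMlip} is needed.
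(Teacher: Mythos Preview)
Your proof is correct and follows the same route as the paper, which simply invokes Theorem \ref{THMlip} after Proposition \ref{propcodim1}. Your closing remark about the diagonal is unnecessary, however: by the paper's own characterization, a constant path at a point $x$ is a Goh path if and only if $\Delta(x)+[\Delta,\Delta](x)\neq T_xM$, i.e.\ if and only if $x\in\Sigma_\Delta$; since on your neighborhood one factor avoids $\Sigma_\Delta$, even the trivial geodesic at a diagonal point $(x',x')$ (which forces $x'\notin\Sigma_\Delta$) is not Goh, so Theorem \ref{THMlip} applies without caveat.
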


Note that, as for medium-fat distributions, for generic metrics the function $d_{SR}^2$ is locally semiconcave on $M\times M \setminus \left( D \cup \Sigma_{\Delta} \times \Sigma_{\Delta}\right)$.

\subsection{Rank-two distributions in dimension four}

Let $(M,\Delta,g)$ be a complete sub-Riemannian manifold of dimension four, and let $\Delta$ be a \textit{regular} rank-two distribution, that is
$$
T_x M = \SPAN \left\{f_1 (x), f_2 (x), [f_1,f_2](x), [f_1,[f_1,f_2]](x), [f_2,[f_1,f_2]](x)  \right\}
$$
for any local parametrization of the distribution. In
\cite{sussmann96} Sussmann shows  that there is a smooth
horizontal vector field $X$ on $M$ such that the singular
horizontal curves $\gamma$ parametrized by arc-length are exactly
the integral curves of $X$, i.e. the curves satisfying
$$
\dot{\gamma}(t) = X(\gamma(t)).
$$
By the way, it can also be shown that those curves are locally
minimizing between their end-points (see \cite{ls95,sussmann96}).
For every $x\in M$, denote by $\mathcal{O}(x)$ the orbit of $x$ by
the flow of $X$, and set
$$
\Omega := \left\{ (x,y) \in M \times M \mid y \notin \mathcal{O}(x)\right\}.
$$
Sussmann's Theorem, together with Theorem \ref{THMsc}, yields the following result:

\begin{proposition}
\label{propsuss} Under the above assumption, the function
$d_{SR}^2$ is locally semiconcave in the interior of $\Omega$.
\end{proposition}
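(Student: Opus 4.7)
The plan is to reduce the statement to Theorem \ref{THMsc} by showing that, in the interior of $\Omega$, no pair of distinct points admits a nontrivial singular sub-Riemannian minimizing geodesic between them. Once this is verified on an open neighborhood of an arbitrary interior point, Theorem \ref{THMsc} yields local semiconcavity of $d_{SR}^2$ there, and since the point was arbitrary, the conclusion follows on the whole interior of $\Omega$.

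I would fix $(x_0,y_0)$ in the interior of $\Omega$ and choose an open set $W \subset \Omega$ with $(x_0,y_0) \in W$. By definition of $\Omega$, for every $(x,y) \in W$ one has $y \notin \mathcal{O}(x)$, so in particular $x \neq y$, and $W$ is disjoint from the diagonal $D$. I would then appeal to Sussmann's characterization recalled just before the statement: any nontrivial horizontal singular curve in $M$, once parametrized by arclength, coincides with an integral curve of the vector field $X$ on a subinterval. Since reparametrization does not change the image of a curve, the image of any such singular curve issuing from $x$ lies in the orbit $\mathcal{O}(x)$.

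Suppose now, for the sake of contradiction, that for some $(x,y) \in W$ there exists a nontrivial singular minimizing geodesic $\gamma:[0,1]\to M$ joining $\gamma(0)=x$ to $\gamma(1)=y$. By Sussmann's theorem $\gamma([0,1]) \subset \mathcal{O}(x)$, so in particular $y \in \mathcal{O}(x)$, contradicting $(x,y) \in W \subset \Omega$. Consequently, every sub-Riemannian minimizing geodesic joining two points of $W$ is nonsingular, which is precisely the hypothesis required to apply Theorem \ref{THMsc} on the open set $W$. The conclusion of that theorem is that $d_{SR}^2$ is locally semiconcave on $W$, and covering the interior of $\Omega$ by such neighborhoods finishes the proof.

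The only delicate point in carrying this out is the careful matching of Sussmann's statement (which concerns arclength-parametrized singular curves) with the definition of singular minimizing geodesic used in Theorem \ref{THMsc}; but since singularity and minimality are properties of the image (up to reparametrization), this matching is routine. I would expect no substantial obstacle beyond this bookkeeping.
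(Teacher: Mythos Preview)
Your proposal is correct and follows exactly the route the paper indicates: the paper does not give a detailed proof but simply states that ``Sussmann's Theorem, together with Theorem \ref{THMsc}, yields the following result,'' and what you have written is precisely the unpacking of that implication. Your observation that $\Omega$ is automatically disjoint from the diagonal (since $x\in\mathcal{O}(x)$) and your remark on the arclength reparametrization are the only bookkeeping details needed, and you have handled them correctly.
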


As an example, consider the distribution $\Delta$ in $\R^4$ spanned by the two vector fields
$$
f_1 = \frac{\partial }{\partial x_1}, \qquad f_2= \frac{\partial}{\partial x_2} + x_1 \frac{\partial}{\partial x_3} + x_3 \frac{\partial}{\partial x_4}.
$$
It is easy to show that a horizontal path $\gamma :[0,1] \rightarrow \R^4$ is singular if and only if it satisfies, up to  reparameterization by arc-length,
$$
\dot{\gamma}(t) = f_1(\gamma(t)) \qquad \forall t\in [0,1].
$$
By Proposition \ref{propsuss} we deduce that, for any complete
metric $g$ on $\R^4$, the function $d_{SR}^2$ is locally
semiconcave on the set
$$
\Omega = \left\{ (x,y) \in \R^4 \times \R^4 \mid (y-x) \notin \SPAN \{e_1\} \right\},
$$
where $e_1$ denotes the first vector in the canonical basis of $\R^4$. Consequently, for any pair of measures $\mu \in P_c^{ac}(M)$, $\nu \in P_c(M)$ satisfying
$\supp(\mu\times \nu) \subset \O$, Theorem \ref{mainthm} applies (or more in general, if $\mu\times \nu(\O)=1$, we can apply the argument in
Paragraph \ref{subsect:noncpt}).

\section{Facts in sub-Riemannian geometry}
\label{sect:subRiemgeom}

Throughout this section $(M,\Delta,g)$ denotes a sub-Riemannian
manifold of rank $m<n$, which is assumed to be complete with
respect to the sub-Riemannian distance. As in the Riemannian case,
the Hopf-Rinow Theorem holds. In particular any two points in $M$
can be joined by a minimizing geodesics, and any sub-Riemannian
ball of finite radius is a compact subset of $M$. We refer the
reader to \cite[Appendix D] {montgomery02} for the proofs of those
results. We present in the following subsections  a list of basic
facts in sub-Riemannian geometry, whose the proofs may be found in
\cite{montgomery02} and \cite{riffordbook}.

\subsection{Nonholonomic distributions vs. nonholonomic control systems}

Any nonholonomic distribution can be locally parameterized by a
nonholonomic control system, that is by a smooth dynamical system
with parameters called controls. Indeed, assume that $\mathcal{V}$
is an open subset of $M$ such that there are $m$ smooth vector
fields $f_1,\ldots ,f_m$ on $\mathcal{V}$ which parametrize the
nonholonomic distribution $\Delta$ on $\mathcal{V}$, that is which
satisfy
$$
\Delta  (x) = \SPAN \left\{f_1 (x),\ldots, f_m(x) \right\} \qquad \forall x\in \mathcal{V},
$$
and
$$
\mbox{Lie} \left\{ f_1 , \ldots, f_m \right\} (x) = T_xM \qquad \forall x \in \mathcal{V}.
$$
Given $x\in \mathcal{V}$, there is a correspondence  between the set of horizontal paths in $\Omega_{\Delta}(x)$ which remain in $\mathcal{V}$ and the set of admissible controls of the control system
$$
\dot{x} = \sum_{i=1}^m u_i f_i(x).
$$
A \textit{control} $u\in L^2([0,1],\R^m)$ is called \textit{admissible} with respect to $x$ and $\mathcal{V}$ if the solution $\gamma_{x,u}$ to the Cauchy problem
$$
\dot{x}(t) = \sum_{i=1}^m u_i(t) f_i(x(t)) \qquad \mbox{for a.e. } t\in [0,1], \quad x(0)=x,
$$
is well-defined on $[0,1]$ and remains in $\mathcal{V}$. The set $\mathcal{U}_x$ of admissible controls is an open subset of $L^2 ([0,1],\R^m)$.

\begin{proposition}
Given $x\in M$, the mapping
$$
\begin{array}{rcl}
\mathcal{U}_x & \longrightarrow & \Omega_{\Delta}(x)\\
u & \longmapsto & \gamma_{x,u}
\end{array}
$$
is one-to-one.
\end{proposition}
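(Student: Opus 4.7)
The plan is to derive the injectivity of the map $u\mapsto\gamma_{x,u}$ directly from the pointwise linear independence of the parametrizing frame $(f_1,\ldots,f_m)$ on $\mathcal{V}$. Well-posedness of the map is automatic: for each $u\in\mathcal{U}_x$ the Carath\'eodory ODE $\dot{x}=\sum_i u_i(t)\, f_i(x)$ with initial datum $x$ has, by the very definition of $\mathcal{U}_x$, a unique absolutely continuous solution $\gamma_{x,u}:[0,1]\to\mathcal{V}$, which in fact belongs to $W^{1,2}([0,1],M)$ since the $f_i$ are bounded on the compact image of $\gamma_{x,u}$ and $u\in L^2$.

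For the one-to-one property, suppose $u,v\in\mathcal{U}_x$ both produce the same trajectory $\gamma_{x,u}=\gamma_{x,v}=:\gamma$. Subtracting the two defining equations gives, for almost every $t\in[0,1]$,
$$
\sum_{i=1}^m \bigl(u_i(t)-v_i(t)\bigr)\, f_i(\gamma(t)) \;=\; 0 \qquad \text{in } T_{\gamma(t)}M.
$$
Since $\gamma(t)\in\mathcal{V}$ and the vectors $f_1(\gamma(t)),\ldots,f_m(\gamma(t))$ are linearly independent in $T_{\gamma(t)}M$ by the hypothesis on the local parametrization, the scalars $u_i(t)-v_i(t)$ must vanish for every $i$ and for almost every $t$, giving $u=v$ in $L^2([0,1],\R^m)$.

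For completeness, to justify the bijective correspondence alluded to in the surrounding text---namely that every horizontal path of $\Omega_\Delta(x)$ remaining in $\mathcal{V}$ is of the form $\gamma_{x,u}$ for some (necessarily unique, by the above) $u\in\mathcal{U}_x$---one would construct the control via the dual frame of $(f_1,\ldots,f_m)$ induced by $g$: setting $G_{ij}(z):=g_z(f_i(z),f_j(z))$ and $(G^{ij}(z)):=G(z)^{-1}$, one defines
$$
u_i(t) \;:=\; \sum_{j=1}^m G^{ij}(\gamma(t))\, g_{\gamma(t)}\bigl(\dot\gamma(t),f_j(\gamma(t))\bigr).
$$
The hard part---though not of the stated injectivity claim itself---is then verifying that $u\in L^2([0,1],\R^m)$, which follows from the smoothness and local boundedness of $G^{-1}$ along the compact set $\gamma([0,1])\subset\mathcal{V}$ together with $\dot\gamma\in L^2$; measurability is inherited from absolute continuity of $\gamma$ and smoothness of the frame. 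The stated proposition itself is an immediate consequence of the linear-independence argument and carries no essential technical obstacle.
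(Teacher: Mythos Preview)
Your argument is correct. The paper does not actually supply a proof of this proposition: it is listed among the ``basic facts in sub-Riemannian geometry, whose proofs may be found in \cite{montgomery02} and \cite{riffordbook}''. Your injectivity argument --- subtracting the two ODEs and invoking the pointwise linear independence of the frame $(f_1,\ldots,f_m)$ on $\mathcal{V}$ --- is exactly the standard one and is complete as written. The additional paragraph on surjectivity (constructing the control from a given horizontal path via the dual frame) is also correct and matches the intended ``correspondence'' mentioned in the text preceding the proposition, though strictly speaking the proposition as stated only asserts injectivity.
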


Given $x\in M$, the end-point-mapping from $x$, from the control viewpoint, takes the following form
$$
\begin{array}{rcl}
\textrm{E}_x: \mathcal{U}_x & \longrightarrow & M\\
u & \longmapsto & \gamma_{x,u}(1)
\end{array}
$$
This mapping is smooth. The derivative of the end-point mapping from $x$ at $u \in \mathcal{U}_x$, that we shall denote by $d\textrm{E}_x (u)$,  is given by
\begin{equation*}
d\textrm{E}_{x}(u) (v) = d\Phi^u (1,x) \int_0^1 \left( d\Phi^u (t,\bar{x}) \right)^{-1}
\biggl( \sum_{i=1}^m v_i(t) f_i(\gamma_{x,u}(t)) \biggr) dt \quad \forall v \in L^2([0,1],\R^m),
\end{equation*}
where $\Phi^u (t,x)$ denotes the flow of the time-dependent  vector field $X^u$ defined by
$$
X^u(t,x)  := \sum_{i=1}^m u_i(t) f_i (x) \qquad \mbox{for a.e. } t \in [0,1], \quad \forall x \in \mathcal{V},
$$
(note that the flow is well-defined in a neighborhood of $x$). We say that an admissible control $u$ is \textit{singular} with respect to $x$ if
$d\textrm{E}_x$ is singular at $u$. Observe that this is equivalent to say that
its associated horizontal path is singular (see the definition of singular path given in Section \ref{sect:prelimresults}). It is important to notice that the singularity of a given horizontal path does not depend on the metric but only on the distribution.

\subsection{Characterization of singular horizontal paths}

Denote by $\omega$ the canonical symplectic form on $T^*M$ and by
$\Delta^\perp$ the annihilator of $\Delta$ in $T^*M$ minus its
zero section. Define $\overline{\omega}$ as the restriction of
$\omega$ to $\Delta^\perp$. An absolutely continuous curve
$\psi:[0,1]\rightarrow \Delta^\perp$ such that
$$
\dot{\psi}(t) \in \ker \overline{\omega}(\psi(t)) \qquad \mbox{for
a.e. } t\in [0,1]
$$
is called an \textit{abnormal extremal} of $\Delta$.

\begin{proposition}
\label{propsing1}
A horizontal path $\gamma :[0,1]\rightarrow M$ is singular if and only if it is the projection of an abnormal extremal
$\psi$ of $\Delta$. The curve $\psi$ is said to be an \textit{abnormal extremal lift} of $\gamma$.
\end{proposition}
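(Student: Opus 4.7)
The plan is to read off singularity of $\gamma$ from the cokernel of $dE_x(u)$ and then identify that cokernel with the data of an abnormal extremal lift. By compactness of $\gamma([0,1])$ and a partition-of-unity argument, I may work in a single open set $\mathcal{V}$ carrying a smooth frame $f_1,\ldots,f_m$ of $\Delta$, so that $\gamma=\gamma_{x,u}$ for a unique admissible control $u\in\mathcal{U}_x$. The path $\gamma$ is singular iff $dE_x(u)\colon L^2([0,1],\R^m)\to T_{\gamma(1)}M$ is not onto, which (the target being finite-dimensional) is equivalent to the existence of a nonzero covector $p\in T^*_{\gamma(1)}M$ with $p\circ dE_x(u)=0$.

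Using the formula for $dE_x(u)$ recalled above, the relation $p\circ dE_x(u)=0$ becomes
\[
\int_0^1 \Bigl\langle p(t),\, \sum_{i=1}^m v_i(t)\,f_i(\gamma(t))\Bigr\rangle\,dt=0 \qquad \forall v\in L^2([0,1],\R^m),
\]
where $p(t)\in T^*_{\gamma(t)}M$ is the backward adjoint transport of $p$, namely $p(t):=\bigl(d\Phi^u(t,x)\bigr)^{*}\bigl(d\Phi^u(1,x)\bigr)^{-*}p$. In local coordinates $p(\cdot)$ solves the adjoint linear equation $\dot p=-\bigl(D_xX^u(t,\gamma)\bigr)^{T}p$ with $p(1)=p$, and combined with $\dot\gamma=X^u(t,\gamma)$ this means exactly that $\psi(t):=(\gamma(t),p(t))$ is an integral curve of the time-dependent Hamiltonian vector field $\overrightarrow{H^u}$ associated with $H^u(t,x,p):=\sum_i u_i(t)\,h_i(x,p)$, $h_i(x,p):=\langle p,f_i(x)\rangle$.

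Since the integral identity holds for every $v\in L^2$, the fundamental lemma of the calculus of variations gives $h_i(\psi(t))=\langle p(t),f_i(\gamma(t))\rangle=0$ for a.e.\ $t$ and every $i$; by absolute continuity of $\psi$, this holds for every $t\in[0,1]$, so $\psi(t)\in\Delta^\perp$. Moreover $p(t)\neq 0$ for all $t$, since the adjoint transport is an isomorphism of cotangent fibres and $p\neq 0$. It remains to show $\dot\psi(t)\in\ker\overline{\omega}(\psi(t))$: the Hamiltonian $H^u$ is identically zero on $\Delta^\perp$ (being a linear combination of the $h_i$'s, which vanish on $\Delta^\perp$), hence $dH^u$ annihilates every $\xi\in T_{\psi(t)}\Delta^\perp$, whence
\[
\overline{\omega}(\psi(t))\bigl(\dot\psi(t),\xi\bigr) = \omega\bigl(\overrightarrow{H^u}(\psi(t)),\xi\bigr) = -dH^u(\xi)=0.
\]
This proves the ``only if'' direction. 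The converse is obtained by running exactly the same chain of equivalences backward: any abnormal extremal $\psi$ projecting to $\gamma$ supplies, via $p:=p(1)\neq 0$, a nontrivial element of the cokernel of $dE_x(u)$, hence $u$ and $\gamma$ are singular.

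The subtle point I expect to have to check carefully is the intrinsic nature of the last step, namely that ``$\overrightarrow{H^u}$ lies in $\ker\overline{\omega}$ along $\Delta^\perp$'' does not depend on the chosen local frame $(f_1,\ldots,f_m)$. This is precisely why the statement is formulated in terms of the characteristic distribution $\ker\overline{\omega}$ of the coisotropic submanifold $\Delta^\perp$: the computation $\omega(\overrightarrow{H^u},\xi)=-dH^u(\xi)$ together with $H^u|_{\Delta^\perp}\equiv 0$ produces a frame-free conclusion, which is what lets us glue the local analysis into the global statement of Proposition~\ref{propsing1}.
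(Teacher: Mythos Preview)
The paper does not actually prove Proposition~\ref{propsing1}: it is listed among the ``basic facts in sub-Riemannian geometry, whose proofs may be found in \cite{montgomery02} and \cite{riffordbook}'', and the adjacent Proposition~\ref{propsing2} merely records the local-coordinate form of the same statement. So there is no in-paper argument to compare against; your write-up is essentially the standard proof found in those references.

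Your forward direction is correct in substance. Two small points. First, the formula $p(t)=(d\Phi^u(t,x))^{*}(d\Phi^u(1,x))^{-*}p$ is mistyped: as written the domains and codomains do not match ($p$ lives in $T^*_{\gamma(1)}M$, while $(d\Phi^u(1,x))^{-*}$ takes $T^*_xM$ to $T^*_{\gamma(1)}M$). The correct pullback is $p(t)=\bigl((d\Phi^u(t,x))^{*}\bigr)^{-1}(d\Phi^u(1,x))^{*}p$, which indeed solves the adjoint equation you wrote. Second, the phrase ``partition-of-unity argument'' is not quite right here; what you really use is that the adjoint transport $p(\cdot)$ is defined intrinsically via pullback along the flow of $\dot\gamma$, so the local computations in different frames glue automatically.

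The converse deserves one more sentence than ``run the chain backward.'' Given an abnormal extremal $\psi=(\gamma,p)$ with $\dot\psi\in\ker\overline{\omega}(\psi)$, you need that $p(\cdot)$ satisfies the adjoint equation for the \emph{specific} control $u$ of $\gamma$. This follows because $\ker\overline{\omega}(\psi)\subset (T_\psi\Delta^\perp)^{\perp_\omega}=\mathrm{span}\{\overrightarrow{h_1}(\psi),\ldots,\overrightarrow{h_m}(\psi)\}$ (the $dh_i$ being independent since the $f_i$ are), so $\dot\psi=\sum_i c_i\,\overrightarrow{h_i}(\psi)$ for some $c_i$; projecting to $M$ gives $\dot\gamma=\sum_i c_i f_i(\gamma)$, and linear independence of the frame forces $c_i=u_i$. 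Hence $\psi$ is an integral curve of $\overrightarrow{H^u}$, $p$ solves the adjoint equation, and $p(1)\neq 0$ lies in the cokernel of $dE_x(u)$. With this line added, your argument is complete and matches the treatment in \cite{montgomery02,riffordbook}.
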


If the distribution is parametrized by a family of $m$ smooth vector fields $f_1,\ldots ,f_m$ on some open set $\mathcal{V} \subset M$, and if in addition the cotangent bundle $T^*M$ is trivializable over $\mathcal{V}$, then the singular controls, or equivalently the singular horizontal paths which are contained in $\mathcal{V}$, can be characterized as follows. Define the pseudo-Hamiltonian $H_0 : \mathcal{V} \times (\R^n)^* \times (\R^m) \longmapsto \R$ by
$$
H_0 (x,p,u) = \sum_{i=1}^m u_i p(f_i(x)).
$$

\begin{proposition}
\label{propsing2}
Let $x\in \mathcal{V}$ and $u$ be an admissible control with respect to $x$ and $\mathcal{V}$. Then, the control $u$ is singular (with respect to $x$) if and only if there is an arc $p:[0,1] \longrightarrow (\R^n)^* \setminus \{0\}$ in $W^{1,2}$ such that the pair $(x=\gamma_{x,u},p)$ satisfies
\begin{equation}
\label{abnormal1}
\left\{ \begin{array}{rcl}
\dot{x}(t) & = & \frac{\partial H_0}{\partial p} (x(t),p(t),u(t)) = \sum_{i=1}^m u_i(t)  f_i(x(t)) \\
\dot{p}(t) & = & -\frac{\partial H_0}{\partial x} (x(t),p(t),u(t)) = - \sum_{i=1}^m u_i(t) p(t) \cdot df_i(x(t))
\end{array}
\right.
\end{equation}
for a.e. $t\in [0,1]$  and
\begin{equation}
\label{abnormal2}
p(t)\cdot f_i(x(t)) =0 \qquad \forall t \in [0,1], \quad\forall i=1,\ldots, m.
\end{equation}
\end{proposition}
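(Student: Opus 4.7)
The proof is the Pontryagin-style duality between a surjectivity condition on the linearized endpoint mapping and the existence of an adjoint (covector) arc. By definition, $u\in\mathcal{U}_x$ is singular if and only if $d\textrm{E}_x(u)\colon L^2([0,1],\R^m)\to T_{\gamma_{x,u}(1)}M\simeq\R^n$ fails to be onto, which by linear algebra is equivalent to the existence of $\bar p\in(\R^n)^*\setminus\{0\}$ such that $\bar p\cdot d\textrm{E}_x(u)(v)=0$ for every $v\in L^2([0,1],\R^m)$. Inserting the explicit formula for $d\textrm{E}_x(u)$ recalled just before the proposition, this last condition reads
\[
\int_0^1 \bar p\cdot d\Phi^u(1,x)\bigl(d\Phi^u(t,x)\bigr)^{-1}\Bigl(\sum_{i=1}^m v_i(t) f_i(\gamma_{x,u}(t))\Bigr)\,dt=0 \qquad \forall v\in L^2.
\]

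\textbf{Constructing $p$ from $\bar p$ and checking the adjoint equation.} I would define the candidate adjoint arc
\[
p(t):=\bar p\cdot d\Phi^u(1,x)\cdot\bigl(d\Phi^u(t,x)\bigr)^{-1}, \qquad t\in[0,1],
\]
which never vanishes since $\bar p\neq 0$ and both Jacobian matrices are invertible. Setting $Y(t):=d\Phi^u(t,x)$, the variation-of-constants formula gives $\dot Y(t)=dX^u(t,\gamma_{x,u}(t))\,Y(t)$, whence $Z(t):=Y(t)^{-1}$ satisfies $\dot Z(t)=-Z(t)\,dX^u(t,\gamma_{x,u}(t))$; in particular $Y,Z\in W^{1,2}$ and so $p\in W^{1,2}$. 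Differentiating $p(t)=\bar p\,d\Phi^u(1,x)\,Z(t)$ and using $dX^u(t,\cdot)=\sum_i u_i(t)\,df_i(\cdot)$ yields
\[
\dot p(t)= -p(t)\,dX^u(t,\gamma_{x,u}(t))=-\sum_{i=1}^m u_i(t)\,p(t)\cdot df_i(\gamma_{x,u}(t)),
\]
which is exactly the adjoint equation in \eqref{abnormal1} (the state equation there being just the definition of $\gamma_{x,u}$). The orthogonality condition follows at once: the integral identity above rewrites as
\[
\int_0^1 \sum_{i=1}^m v_i(t)\,p(t)\cdot f_i(\gamma_{x,u}(t))\,dt=0 \qquad \forall v\in L^2,
\]
which forces $p(t)\cdot f_i(\gamma_{x,u}(t))=0$ for a.e. $t$ and every $i$, hence for every $t$ by continuity, giving \eqref{abnormal2}.

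\textbf{Converse and main obstacle.} For the converse, starting from a $W^{1,2}$ arc $p$ satisfying \eqref{abnormal1}--\eqref{abnormal2} with $p\not\equiv 0$, I would note that $t\mapsto p(t)Y(t)$ has zero derivative (combining the adjoint equation with $\dot Y=dX^u\,Y$), hence is constant: $p(t)=p(0)Z(t)$ and $p(0)\neq 0$. The choice $\bar p:=p(0)\,(d\Phi^u(1,x))^{-1}\neq 0$ then produces a nontrivial annihilator of $d\textrm{E}_x(u)$ via the same integral identity, proving that $u$ is singular. The only technical points are the $W^{1,2}$-regularity and invertibility of $Y(t)$ on the whole interval $[0,1]$, which follow from standard Carath\'eodory ODE theory applied to the time-dependent vector field $X^u$ on the open set $\mathcal{V}$ (and are implicit in the admissibility of $u$); beyond that the argument is purely formal, so I do not expect any substantial obstacle.
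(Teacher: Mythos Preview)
The paper does not supply a proof of this proposition: it is listed among the ``basic facts in sub-Riemannian geometry, whose the proofs may be found in \cite{montgomery02} and \cite{riffordbook}''. Your argument is correct and is precisely the standard derivation one finds in those references: rewrite singularity of $u$ as non-surjectivity of $d\textrm{E}_x(u)$, pick a nonzero covector $\bar p$ annihilating its image, transport $\bar p$ backwards along the linearized flow to produce the adjoint arc $p(t)=\bar p\,d\Phi^u(1,x)\,\bigl(d\Phi^u(t,x)\bigr)^{-1}$, and read off \eqref{abnormal1}--\eqref{abnormal2} from the variation-of-constants formula and the $L^2$-duality. The converse via the constancy of $t\mapsto p(t)\,d\Phi^u(t,x)$ is also the standard one (and one may note that the resulting $\bar p$ is simply $p(1)$). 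There is nothing to add; your write-up would serve perfectly well as the missing proof.
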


A control or a horizontal path which is singular is sometimes called \textit{abnormal}. If it is not singular, we call it \textit{nonsingular} or \textit{regular}.

\subsection{Sub-Riemannian minimizing geodesics}

As we said in Section \ref{sect:prelimresults}, since the metric space $(M,d_{SR})$ is assumed to be
complete, for every pair $x,y\in M$ there is a horizontal
path $\gamma $ joining $x$ to $y$ such that
$$
d_{SR}(x,y) = \mbox{length}_{g}(\gamma ).
$$
If $\gamma$ is parametrized by arc-length, then using Cauchy-Schwarz inequality it is easy to show that $\gamma $ minimizes the quantity
$$
\int_0^1 g_{\gamma(t)} (\dot{\gamma}(t),\dot{\gamma}(t)) dt =: \mbox{energy}_g (\gamma),
$$
over all horizontal paths joining $x$ to $y$. This infimum,
denoted by  $e_{SR}(x,y)$, is called  the \textit{sub-Riemannian
energy} between $x$ and $y$. Since $M$ is assumed to be complete,
the infimum is always attained, and the horizontal paths which
minimize the sub-Riemannian energy are those which minimize the
sub-Riemannian distance and which are parametrized by arc-length.
In particular, one has
$$
e_{SR}(x,y) = d_{SR}^2(x,y) \qquad \forall x,y \in M.
$$
Assume from now that $\gamma$ is a given horizontal path
minimizing the energy between $x$ and $y$. Such a path is called a
\textit{sub-Riemannian minimizing geodesic}. Since $\gamma$
minimizes also the distance, it has no self intersection. Hence we
can parametrize the distribution along $\gamma$: there is an open
neighborhood $\mathcal{V}$ of $\gamma([0,1])$ in $M$ and an
orthonormal family (with respect to the metric $g$) of $m$ smooth
vector fields $f_1,\ldots ,f_m$ such that
$$
\Delta (z) = \SPAN \left\{f_1 (z),\ldots, f_m (z) \right\} \qquad \forall z \in \mathcal{V}.
$$
Moreover, since $\gamma$ belongs to $W^{1,2}([0,1],M)$, there
exists a control $u^{\gamma} \in L^2([0,1],\R^m)$ (in fact,
$|u^{\gamma}(t)|^2$ is constant), which is admissible with respect
to $x$ and $\mathcal{V}$, such that
$$
\dot{\gamma}(t) = \sum_{i=1}^m u_i^{\gamma} (t) f_i(\gamma (t)) dt \qquad  \mbox{for a.e. } t\in [0,1].
$$
By the discussion above, we know that $u^{\gamma}$ minimizes the quantity
$$
\int_0^1 g_{\gamma_{x,u}(t)} \biggl(\sum_{i=1}^m u_i(t) f_i(\gamma_{x,u}(t) ),  \sum_{i=1}^m u_i(t) f_i(\gamma_{x,u}(t) ) \biggr) dt = \int_0^1 \sum_{i=1}^m u_i(t)^2 dt =: C(u),
$$
among all controls $u\in L^2([0,1],\R^m)$ which are admissible with respect to $x$ and $\mathcal{V}$, and which satisfy the constraint
$$
\textrm{E}_x (u) =y.
$$
By the Lagrange Multiplier Theorem, there is $\lambda \in (\R^n)^*$ and $\lambda_0 \in \{0,1\}$ such that
\begin{equation}
\label{LMT}
\lambda \cdot d\textrm{E}_x(u^{\gamma}) - \lambda_0 dC(u^{\gamma})=0.
\end{equation}
Two cases may appear, either $\lambda_0=0$ or $\lambda_0=1$. By restricting $\mathcal{V}$ if necessary, we can assume  that the cotangent bundle $T^*M$ is trivializable with coordinates $(x,p) \in \R^n \times (\R^n)^*$ over  $\mathcal{V}$.\\

\noindent First case: $\lambda_0=0$.  The linear operator $d\textrm{E}_x(u^{\gamma}):L^2([0,1],\R^m) \rightarrow T_yM$ cannot be onto,  which means that the control $u$ is necessarily singular. Hence there is an arc $p:[0,1] \longrightarrow (\R^n)^* \setminus \{0\}$ in $W^{1,2}$                  satisfying (\ref{abnormal1}) and (\ref{abnormal2}). In other terms, $\gamma = \gamma_{x,u^{\gamma}}$ admits an abnormal extremal lift in $T^*M$. We also says that $\gamma$ is an \textit{abnormal minimizing geodesic}.\\

\noindent Second case:  $\lambda_0=1$. In local coordinates, the Hamiltonian $H$ (defined in \eqref{monHam}) takes the following form:
\begin{equation}
\label{normal0}
H(x,p) = \frac{1}{2} \sum_{i=1}^m \bigl(  p \cdot f_i(x) \bigr)^2 = \max_{u\in \R^m} \biggl\{ \sum_{i=1}^m u_i p \cdot f_i(x) - \frac{1}{2} \sum_{i=1}^m u_i^2 \biggr\}
\end{equation}
for all $(x,p) \in \mathcal{V} \times (\R^n)^*$. Then the
following result holds:
\begin{proposition}
\label{normal}
Equality (\ref{LMT}) with $\lambda_0=1$ yields the existence of an arc $p:[0,1] \longrightarrow (\R^n)^*$ in $W^{1,2}$, with $p(1) = \frac{\lambda}{2}$,
such that the pair $(\gamma=\gamma_{x,u^{\gamma}},p)$ satisfies
\begin{equation}
\label{normal1}
\left\{ \begin{array}{rcl}
\dot{\gamma}(t) & = & \frac{\partial H}{\partial p} (\gamma(t),p(t)) = \sum_{i=1}^m \left[p(t) \cdot f_i(\gamma(t)) \right]  f_i(\gamma(t)) \\
\dot{p}(t) & = & -\frac{\partial H}{\partial x} (\gamma(t),p(t)) = - \sum_{i=1}^m \left[p(t) \cdot f_i(\gamma(t))\right] p(t) \cdot df_i(\gamma(t))
\end{array}
\right.
\end{equation}
for a.e. $t \in [0,1]$ and
\begin{equation}
\label{normal2}
u^{\gamma}_i(t) = p(t) \cdot f_i(\gamma(t))  \qquad \mbox{for a.e. } t\in [0,1], \quad\forall i=1,\ldots, m.
\end{equation}
In particular, the path $\gamma$ is smooth on $[0,1]$. The curve $\gamma$ and the control $u^{\gamma}$ are called normal.
\end{proposition}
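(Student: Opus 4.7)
The plan is to extract the arc $p$ by transporting the Lagrange multiplier $\lambda$ backwards along the linearized flow, and then to verify (\ref{normal1}) and (\ref{normal2}) by direct computation.

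First I would write the Lagrange multiplier identity (\ref{LMT}) with $\lambda_0=1$ in integral form. Using the explicit formula for $d\mathrm{E}_x(u^\gamma)$ recalled in the previous paragraph, and the trivial computation $dC(u^\gamma)(v)=2\int_0^1 \sum_i u_i^\gamma(t)\,v_i(t)\,dt$, the identity reads
\begin{equation*}
\int_0^1 \lambda\cdot d\Phi^{u^\gamma}(1,x)\,\bigl(d\Phi^{u^\gamma}(t,x)\bigr)^{-1}\!\!\sum_{i=1}^m v_i(t)\,f_i(\gamma(t))\,dt = 2\int_0^1\sum_{i=1}^m u_i^\gamma(t)\,v_i(t)\,dt
\end{equation*}
for every $v\in L^2([0,1],\R^m)$. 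This suggests the natural definition
\begin{equation*}
p(t) \; := \; \tfrac{1}{2}\,\lambda\cdot d\Phi^{u^\gamma}(1,x)\cdot \bigl(d\Phi^{u^\gamma}(t,x)\bigr)^{-1},
\end{equation*}
which is absolutely continuous on $[0,1]$ (in fact $W^{1,2}$, since $u^\gamma\in L^2$), and clearly satisfies $p(1)=\lambda/2$. Testing the identity above against arbitrary $v\in L^2([0,1],\R^m)$ and invoking the fundamental lemma of the calculus of variations immediately yields (\ref{normal2}), namely $u_i^\gamma(t)=p(t)\cdot f_i(\gamma(t))$ for a.e.\ $t$.

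Next I would derive the Hamiltonian system (\ref{normal1}). The first line is formal: substituting (\ref{normal2}) into $\dot\gamma(t)=\sum_i u_i^\gamma(t) f_i(\gamma(t))$ and using the expression (\ref{normal0}) of $H$, one finds $\dot\gamma(t)=\sum_i [p(t)\cdot f_i(\gamma(t))]\,f_i(\gamma(t)) = \partial_p H(\gamma(t),p(t))$. For the second line, I would differentiate the definition of $p(t)$. Setting $R(t):=d\Phi^{u^\gamma}(t,x)$, the variational equation gives $\dot R(t) = \bigl(\sum_i u_i^\gamma(t)\,df_i(\gamma(t))\bigr) R(t)$, hence $\frac{d}{dt}R(t)^{-1} = -R(t)^{-1}\bigl(\sum_i u_i^\gamma(t)\,df_i(\gamma(t))\bigr)$. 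Therefore
\begin{equation*}
\dot p(t) \;=\; -\,p(t)\sum_{i=1}^m u_i^\gamma(t)\,df_i(\gamma(t)) \;=\; -\sum_{i=1}^m \bigl[p(t)\cdot f_i(\gamma(t))\bigr]\,p(t)\cdot df_i(\gamma(t)),
\end{equation*}
where in the last equality I used (\ref{normal2}). A direct inspection of (\ref{normal0}) shows this is exactly $-\partial_x H(\gamma(t),p(t))$.

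Finally, the smoothness of $\gamma$ follows from a standard bootstrap. Since $p\in W^{1,2}\subset C^0$ and the $f_i$ are smooth, (\ref{normal2}) shows $u^\gamma\in C^0$, whence $\dot\gamma\in C^0$ and $\gamma\in C^1$. Feeding this back into the Hamiltonian system (which has smooth coefficients) gives $\dot p\in C^0$ as well, so $(\gamma,p)\in C^1$; iterating yields $(\gamma,p)\in C^\infty$. I do not anticipate a real obstacle here: every step is a routine consequence of the definitions, the variational equation for the flow, and the shape of the sub-Riemannian Hamiltonian (\ref{normal0}). The only point demanding a little care is to justify that the formal integration by parts in the Lagrange identity is legitimate in $L^2$, but this is immediate because $p$ is continuous and the integrands are in $L^2$.
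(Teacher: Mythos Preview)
Your proof is correct and is exactly the standard derivation of the normal extremal equations from the Lagrange multiplier rule via the adjoint (costate) arc. The paper itself does not give a proof of this proposition: it is listed among the ``basic facts in sub-Riemannian geometry, whose proofs may be found in \cite{montgomery02} and \cite{riffordbook}'', so there is nothing to compare against beyond observing that your argument is precisely the one found in those references.
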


The curve $\psi :[0,1] \rightarrow T^*M$ given by $\psi(t) =(\gamma(t),p(t))$ for every $t\in [0,1]$ is a normal extremal
whose the projection is $\gamma$ and which satisfies $\psi(1)= (y,\frac{\lambda}{2})$. We say that $\psi$ is a \textit{normal extremal lift} of $\gamma$. We also say that $\gamma$ is a \textit{normal minimizing geodesic}.\\

To summarize, the minimizing geodesic (or equivalently the minimizing control $u^{\gamma}$) is either abnormal or normal.  Note that  it could be both normal and abnormal. For decades the prevailing wisdom was that every sub-Riemannian minimizing geodesic is normal, meaning that it admits a normal extremal lift. In 1991, Montgomery found the first counterexample to this assertion (see \cite{montgomery94,montgomery02}).

\subsection{The sub-Riemannian exponential mapping}

Let $x\in M$ be fixed. The \textit{sub-Riemannian exponential mapping} from $x$ is defined by
$$
\begin{array}{rcl}
\exp_{x} : T^*_xM & \longrightarrow & M\\
p & \longmapsto & \pi(\psi(1)),
\end{array}
$$
where $\psi $ is the normal extremal so that $\psi(0)=(x,p)$ in local coordinates.
Note that $H(\psi(t))$ is constant along a normal extremal $\psi $, hence we have
$$
\mbox{energy}_g (\pi(\psi)) =  \left( \mbox{length}_g (\pi(\psi)) \right)^2 =2 H( \psi(0)).
$$
The exponential mapping is not necessarily onto. However, since
$(M,d_{SR})$ is complete, the image of the exponential mapping, $
\exp_{x}  (T^*_xM)$ can be shown to contain an open dense subset
of $M$. This result, which was  obtained recently by Agrachev (see
\cite{agrachevNEW}), is a consequence of the following fact (which
appeared in \cite{rt05}, see also \cite{al07}), which is also
crucial in the proofs of Theorems \ref{mainthm}, \ref{thmrectif}.

\begin{proposition}
\label{propexp1} Let $y\in M$, and assume that there is a function
$\phi: M \rightarrow \R$ differentiable at $y$ such that
$$
\phi(y) = d_{SR}^2(x,y) \quad \mbox{and} \quad  d_{SR}^2(x,z) \geq \phi (z) \quad \forall z \in M.
$$
Then there exists a unique minimizing geodesic between $x$ and
$y$, which is the projection of the normal extremal $\psi:[0,1]
\rightarrow T^*M$ satisfying $\psi(1)=(y,\frac{1}{2}d\phi(y))$. In
particular $x=\exp_{y}(-\frac{1}{2}d\phi(y))$.
\end{proposition}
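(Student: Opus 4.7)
The plan is to view any sub-Riemannian minimizing geodesic from $x$ to $y$ as a global minimum of a suitably penalized functional on the space of admissible controls. The vanishing of its first variation will then produce a Lagrange multiplier equal to $d\phi(y)$, forcing every such geodesic to be a normal extremal with terminal covector $\tfrac12 d\phi(y)$; uniqueness will follow from Cauchy uniqueness for the Hamiltonian system.

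Let $\gamma:[0,1]\to M$ be an arbitrary minimizing geodesic from $x$ to $y$ parametrized so that $\mbox{energy}_g(\gamma)=d_{SR}^2(x,y)$. Since $\gamma$ has no self-intersection, as in the discussion preceding Proposition \ref{normal} I pick a neighbourhood $\mathcal V$ of $\gamma([0,1])$ on which $\Delta$ is trivialized by an orthonormal frame $f_1,\ldots,f_m$ and $T^*M$ is trivial, and obtain the associated admissible control $u^\gamma\in\mathcal U_x$ with $\textrm{E}_x(u^\gamma)=y$ and $C(u^\gamma)=d_{SR}^2(x,y)$. Now consider the functional
$$ J(u) := C(u)-\phi(\textrm{E}_x(u)), \qquad u\in\mathcal U_x. $$
For every $u\in\mathcal U_x$, Cauchy--Schwarz yields $C(u)\ge d_{SR}^2(x,\textrm{E}_x(u))$, and the standing hypothesis $d_{SR}^2(x,\cdot)\ge\phi$ then gives $J\ge0$ on $\mathcal U_x$. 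The touching identity $\phi(y)=d_{SR}^2(x,y)$ forces $J(u^\gamma)=0$, so $u^\gamma$ is a global minimum of $J$. Because $\textrm{E}_x$ is smooth and $\phi$ is differentiable at $y=\textrm{E}_x(u^\gamma)$, the composition $\phi\circ\textrm{E}_x$ is differentiable at $u^\gamma$, hence so is $J$, and the first-order condition at the minimum reads
$$ dC(u^\gamma) = d\phi(y)\circ d\textrm{E}_x(u^\gamma), $$
which is precisely the Lagrange-multiplier relation (\ref{LMT}) with $\lambda=d\phi(y)$ and $\lambda_0=1$.

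Proposition \ref{normal} therefore supplies an arc $p\in W^{1,2}([0,1],(\R^n)^*)$ with $p(1)=\tfrac12 d\phi(y)$ such that $(\gamma,p)$ solves the normal Hamiltonian system (\ref{normal1}); equivalently, $\gamma$ is the projection of the normal extremal $\psi$ in $T^*M$ determined by the terminal condition $\psi(1)=(y,\tfrac12 d\phi(y))$. Since $\overrightarrow H$ has unique integral curves, $\psi$ is uniquely determined by this terminal datum, and so is its projection $\gamma$; applied to an arbitrary minimizing geodesic between $x$ and $y$, this proves the uniqueness statement. Finally, the evenness of $H$ in the cotangent fibre (see (\ref{normal0})) implies that the curve $\tilde\psi(t):=(\pi(\psi(1-t)),-p(1-t))$ is again an integral curve of $\overrightarrow H$; it starts at $(y,-\tfrac12 d\phi(y))$ and projects at time $1$ to $x$, so by the definition of the exponential mapping $x=\exp_y(-\tfrac12 d\phi(y))$.

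The only slightly delicate point is Step~3, the differentiability of $\phi\circ\textrm{E}_x$ at $u^\gamma$: even though $\phi$ is assumed differentiable only at the single point $y$, the smoothness of $\textrm{E}_x$ propagates that differentiability to the preimage $u^\gamma$, so no $C^1$ hypothesis on $\phi$ is needed. Everything else is a direct appeal to Proposition \ref{normal} together with uniqueness of the Hamiltonian flow.
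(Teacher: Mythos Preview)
Your proof is correct. The paper does not actually supply its own proof of this proposition; it merely records the statement and cites \cite{rt05} and \cite{al07}. The argument you give---replacing the constrained problem $\min\{C(u):\textrm{E}_x(u)=y\}$ by the free minimization of $J(u)=C(u)-\phi(\textrm{E}_x(u))$, reading off the Lagrange multiplier $\lambda=d\phi(y)$ from the first-order condition, and then invoking Proposition~\ref{normal} and uniqueness for the Hamiltonian flow---is precisely the standard proof found in those references, so there is nothing to contrast.

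One small remark on presentation: in your final paragraph you say the delicate point is ``Step~3'', but you have not numbered steps; you mean the differentiability of $\phi\circ\textrm{E}_x$ at $u^\gamma$. Your justification there is fine (the chain rule for Fr\'echet differentiability requires only differentiability of the outer map at the single image point when the inner map is $C^1$), and the time-reversal argument for the formula $x=\exp_y(-\tfrac12 d\phi(y))$ is also correct since $H$ is quadratic, hence even, in $p$.
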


\subsection{The horizontal eikonal equation}

As in the Riemannian case, the sub-Riemannian distance function
from a given point satisfies a Hamilton-Jacobi equation. This fact
is important for the proof of Theorem \ref{thmrectif}. Let us
first recall the definition of viscosity solution:

\begin{definition}{\rm
Let $F:T^*M \times \R \rightarrow \R$ be a given continuous
function, and let $U$ an open subset of $M$. A continuous function
$u:U\rightarrow \R$ is said to be a \textit{viscosity subsolution}
on $U$ of the Hamilton-Jacobi equation
\begin{equation}\label{HJ}
F(x,du(x),u(x))=0
\end{equation}
if and only if, for every $C^{1}$ function $\phi:U\rightarrow \R$
satisfying $\phi \geq u$ we have
$$
\forall x \in U, \qquad \phi(x) =u(x)\quad \Longrightarrow \quad
F(x,d\phi(x),u(x))\leq 0.
$$
Similarly, a continuous function $u:U \rightarrow \R$ is said to
be a  \textit{viscosity supersolution} of (\ref{HJ}) on $U$ if and
only if, for every $C^{1}$ function $\psi:U\rightarrow \R$
satisfying $\psi \leq u$ we have,
$$
\forall x \in U, \qquad \psi(x) =u(x)\quad \Longrightarrow \quad
F(x,d\psi(x),u(x))\geq 0.
$$
A continuous function $u:U \rightarrow \R$ is called a
\textit{viscosity solution} of (\ref{HJ}) on $U$ if it is both a
viscosity subsolution and a viscosity supersolution of (\ref{HJ})
on $U$.}
\end{definition}

\begin{proposition}
\label{PROP1} For every $x\in M$ the function
$f(\cdot)=d_{SR}(x,\cdot)$ is a viscosity solution of the
Hamilton-Jacobi equation
 \begin{equation}\label{HJeik}
H  (y,d f(y) )=\frac{1}{2} \qquad \forall y \in M \setminus \{x\}.
\end{equation}
\end{proposition}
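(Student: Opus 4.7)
My plan is to verify separately the viscosity subsolution and supersolution properties of $f(\cdot) = d_{SR}(x,\cdot)$ at an arbitrary $y \in M \setminus \{x\}$. Both halves rely on the fact that, by the triangle inequality for $d_{SR}$, the function $f$ is $1$-Lipschitz with respect to $d_{SR}$, combined with the existence of horizontal curves realizing this bound in any prescribed horizontal direction (for the subsolution) and realizing the distance $f(y)$ itself (for the supersolution).

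\textbf{Subsolution.} Let $\phi \in C^{1}$ satisfy $\phi \geq f$ and $\phi(y) = f(y)$. Locally around $y$ I parametrize $\Delta$ by $g$-orthonormal vector fields $f_{1},\dots,f_{m}$. For any $v = \sum u_{0}^{i} f_{i}(y) \in \Delta(y)$ with $|u_{0}| = 1$, the integral curve $\alpha$ of $\sum u_{0}^{i} f_{i}$ starting at $y$ is a unit-speed horizontal curve with $\dot\alpha(0) = v$, and the triangle inequality yields $|f(\alpha(t)) - f(y)| \leq \mbox{length}_{g}(\alpha|_{[0,t]}) = t$. Combined with $\phi \geq f$ and $\phi(y) = f(y)$, this gives $\phi(\alpha(t)) - \phi(y) \geq -t$, hence $d\phi(y) \cdot v \geq -1$ upon dividing by $t > 0$ and letting $t \to 0^{+}$. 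Running the same construction with $-u_{0}$ in place of $u_{0}$ gives $d\phi(y) \cdot v \leq 1$, so $(d\phi(y) \cdot v)^{2} \leq 1$ for every unit $v \in \Delta(y)$; the definition \eqref{monHam} of $H$ then yields $H(y, d\phi(y)) \leq 1/2$.

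\textbf{Supersolution.} Let $\psi \in C^{1}$ satisfy $\psi \leq f$ near $y$ with $\psi(y) = f(y)$, and pick a minimizing geodesic $\gamma : [0, L] \to M$ from $x$ to $y$ parametrized by arc-length, which exists by completeness of $(M, d_{SR})$. Every restriction $\gamma|_{[0, s]}$ is again minimizing, so $f(\gamma(s)) = s$ on $[0, L]$, and in particular $h(s) := s - \psi(\gamma(s))$ is a nonnegative Lipschitz function that vanishes at $s = L$. Integrating its a.e.\ derivative gives
\[
\int_{s}^{L} \bigl[ 1 - d\psi(\gamma(\tau)) \cdot \dot\gamma(\tau) \bigr]\,d\tau \;=\; h(L) - h(s) \;=\; -h(s) \;\leq\; 0
\]
for every $s$ close enough to $L$, hence $\tau \mapsto d\psi(\gamma(\tau)) \cdot \dot\gamma(\tau)$ has average, and therefore essential supremum, at least $1$ on $[s, L]$. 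I then select Lebesgue points $\tau_{k} \to L^{-}$ with $|\dot\gamma(\tau_{k})|_{g} = 1$, $\dot\gamma(\tau_{k}) \in \Delta(\gamma(\tau_{k}))$, and $d\psi(\gamma(\tau_{k})) \cdot \dot\gamma(\tau_{k}) \geq 1 - 1/k$. The $g$-orthonormal frame of $\Delta$ keeps $(\dot\gamma(\tau_{k}))$ in a compact set of $TM$ near $y$, so along a subsequence it converges to some $v_{\infty} \in \Delta(y)$ with $g_{y}(v_{\infty}, v_{\infty}) = 1$. Continuity of $d\psi$ yields $d\psi(y) \cdot v_{\infty} \geq 1$ and hence $2H(y, d\psi(y)) \geq (d\psi(y) \cdot v_{\infty})^{2} \geq 1$.

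The main obstacle lies exactly in this last limit step: a minimizing geodesic $\gamma$ from $x$ to $y$ is only Lipschitz, and when it admits only abnormal extremal lifts one cannot assume $\dot\gamma(L)$ exists. The averaging-plus-compactness argument bypasses differentiability at the endpoint by selecting tangent vectors at interior Lebesgue points approaching $L$ and exploiting the smoothness of the subbundle $\Delta$ to recover a unit horizontal direction in $\Delta(y)$ at the limit.
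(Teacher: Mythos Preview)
The paper does not supply a proof of this proposition; it is listed among the ``basic facts in sub-Riemannian geometry'' at the start of Section~\ref{sect:subRiemgeom} with a blanket reference to \cite{montgomery02,riffordbook}. Your argument is correct and is essentially the standard one: the $1$-Lipschitzness of $f$ with respect to $d_{SR}$ yields the subsolution inequality along every unit horizontal direction, and a minimizing geodesic into $y$ yields the supersolution inequality. Your care in not assuming that $\dot\gamma(L)$ exists is well placed, since a purely abnormal minimizer need not be $C^1$. As a minor simplification of the supersolution half, you can bypass the compactness extraction entirely: from $\dot\gamma(\tau)\in\Delta(\gamma(\tau))$ with unit $g$-norm one has $d\psi(\gamma(\tau))\cdot\dot\gamma(\tau)\le \sqrt{2H(\gamma(\tau),d\psi(\gamma(\tau)))}$ pointwise, so averaging over $[s,L]$ and letting $s\to L^{-}$, the continuity of $\tau\mapsto H(\gamma(\tau),d\psi(\gamma(\tau)))$ gives $1\le \sqrt{2H(y,d\psi(y))}$ directly.
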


\subsection{Compactness of minimizing geodesics}

The compactness of minimizing curves is crucial to prove
regularity properties of the sub-Riemannian distance. Let us
denote by  $W_{\Delta}^{1,2}([0,1],M)$ the set of horizontal paths
$\gamma :[0,1] \rightarrow M$ endowed with the $W^{1,2}$-topology.
For every $\gamma \in W_{\Delta}^{1,2}([0,1],M)$, the energy of
$\gamma$ with respect to $g$ is well-defined. The classical
compactness result taken from Agrachev \cite{agrachev98} reads as
follows:

\begin{proposition}
\label{propcompact}
For every compact $K\subset M$, the set
$$
\mathcal{K} := \left\{ \gamma \in W_{\Delta}^{1,2} ([0,1],M) \mid \exists\, x,y \in K \mbox{ with } e_{SR}(x,y) = {\rm energy}_g (\gamma) \right\}
$$
is a compact subset of $W^{1,2} ([0,1],M)$.
\end{proposition}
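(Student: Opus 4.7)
The plan is to prove compactness by a standard subsequence-extraction argument: given any sequence $(\gamma_n) \subset \mathcal{K}$, I want to pass to the limit using weak $L^2$ compactness of associated controls, then upgrade weak convergence to strong $W^{1,2}$ convergence using equality of $L^2$ norms. The key ingredients are completeness (to keep everything in a compact region of $M$), continuity of $d_{SR}$ on $K\times K$ (to bound energies), and continuous dependence of the flow on controls in the weak topology.

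First I would set up uniform bounds. By compactness of $K$ and continuity of $d_{SR}$, the quantity $C := \sup_{x,y\in K} d_{SR}(x,y) < +\infty$. Any $\gamma_n \in \mathcal{K}$ satisfies $\mathrm{energy}_g(\gamma_n) = e_{SR}(x_n,y_n) \le C^2$, and by Cauchy--Schwarz its length is $\le C$. Hence all images $\gamma_n([0,1])$ lie in the closed sub-Riemannian ball $\bar B_{SR}(K,C)$, which is compact (Hopf--Rinow). Call this compact set $K'$. Extract a subsequence (still denoted $\gamma_n$) so that $x_n \to x_\infty$ and $y_n \to y_\infty$ in $K$.

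Next I localize. Cover $K'$ by finitely many open sets $\mathcal{V}_1,\dots,\mathcal{V}_N$ on each of which $\Delta$ is parametrized by a $g$-orthonormal frame $(f_1^i,\dots,f_m^i)$. Using the length bound and a Lebesgue-number argument, I can choose a uniform partition $0 = t_0 < t_1 < \dots < t_p = 1$ (independent of $n$, after further subsequence extraction) so that each piece $\gamma_n|_{[t_k,t_{k+1}]}$ lies in a single chart $\mathcal{V}_{i(k)}$, and hence is represented by a control $u_n^{(k)} \in L^2([t_k,t_{k+1}],\R^m)$. Orthonormality of the frame gives $\sum_k \|u_n^{(k)}\|_{L^2}^2 = \mathrm{energy}_g(\gamma_n) \le C^2$. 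Weak $L^2$-compactness then yields a subsequence with $u_n^{(k)} \rightharpoonup u_\infty^{(k)}$ for each $k$. By the standard continuous dependence of flows of $\sum_i u_i f_i$ on the control in the weak $L^2$ topology, the corresponding paths converge uniformly, producing a horizontal limit $\gamma_\infty \in \Omega_\Delta(x_\infty)$ with $\gamma_\infty(1) = y_\infty$.

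Finally I check minimality and upgrade to strong convergence. By lower semicontinuity of the $L^2$-norm under weak convergence,
\begin{equation*}
\mathrm{energy}_g(\gamma_\infty) \;\le\; \liminf_{n\to\infty}\mathrm{energy}_g(\gamma_n) \;=\; \lim_{n\to\infty} e_{SR}(x_n,y_n) \;=\; e_{SR}(x_\infty,y_\infty),
\end{equation*}
while the reverse inequality is automatic, so $\gamma_\infty \in \mathcal{K}$. In particular $\|u_n^{(k)}\|_{L^2}^2 \to \|u_\infty^{(k)}\|_{L^2}^2$ on each piece; combined with weak convergence in the Hilbert space $L^2$, this forces strong $L^2$-convergence of the controls, and hence strong $L^2$-convergence of $\dot\gamma_n$ to $\dot\gamma_\infty$ (within each chart, and thus globally after summation). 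Together with the uniform convergence $\gamma_n \to \gamma_\infty$, this gives $\gamma_n \to \gamma_\infty$ in $W^{1,2}([0,1],M)$, proving compactness.

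The main obstacle is the coordinate-free nature of $W^{1,2}([0,1],M)$: the weak-compactness/continuous-dependence machinery is naturally stated in $\R^n$ with a fixed frame, so one must carefully patch together local control representations on a uniform partition. Once the length bound forces all paths into the compact set $K'$ and allows a partition independent of $n$, the rest is a direct application of Hilbert-space weak convergence plus the classical fact that weak convergence together with convergence of norms implies strong convergence.
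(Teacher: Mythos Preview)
The paper does not actually prove Proposition~\ref{propcompact}: it is stated as ``the classical compactness result taken from Agrachev'' with a citation to \cite{agrachev98}, and no argument is given. So there is no paper proof to compare against directly.

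Your argument is essentially the standard one (and is in the spirit of Agrachev's): bound the energies using compactness of $K$ and continuity of $d_{SR}$, trap all curves in a fixed compact set via Hopf--Rinow, represent them locally by controls in an orthonormal frame, pass to weak $L^2$ limits, use continuous dependence of the flow on weak-$L^2$ controls to get uniform convergence of the trajectories, and finally upgrade weak to strong convergence by matching norms. This is correct in outline.

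Two small points are worth tightening. First, the sentence ``In particular $\|u_n^{(k)}\|_{L^2}^2 \to \|u_\infty^{(k)}\|_{L^2}^2$ on each piece'' does not follow \emph{immediately} from convergence of the total energy: a priori you only get $\sum_k \|u_n^{(k)}\|^2 \to \sum_k \|u_\infty^{(k)}\|^2$ together with $\liminf_n \|u_n^{(k)}\|^2 \ge \|u_\infty^{(k)}\|^2$ on each piece, and one then concludes piecewise equality from these two facts (alternatively, use that minimizing geodesics have constant speed, so $\|u_n^{(k)}\|^2 = (t_{k+1}-t_k)\, e_{SR}(x_n,y_n)$ directly). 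Second, your uniform partition relies on equicontinuity of the $\gamma_n$ with respect to $d_{SR}$; it is worth saying explicitly that this comes from the constant-speed parametrization of energy minimizers (so $d_{SR}(\gamma_n(s),\gamma_n(t)) \le C|s-t|$), which justifies the Lebesgue-number step cleanly. With these clarifications the proof is complete.
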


\subsection{Local semiconcavity of the sub-Riemannian distance}

As we said in Section \ref{sect:prelimresults}, the sub-Riemannian
distance can be shown to be locally H\"older continuous on
$M\times M$, but in general it has no reason to be more regular.
Within the next sections, we are going to show that, under
appropriate  assumptions on the sub-Riemannian structure, $d_{SR}$
enjoyes more regularity properties, such as local semiconcavity or
locally Lipschitz regularity.

Recall that $D$ denotes the diagonal of $M \times M$, that is the
set of all pairs of the form  $(x,x)$ with  $x\in M$. Thanks to
Proposition \ref{propcompact}, the following result holds:

\begin{theorem}
\label{THMsc} Let $\Omega$ be an open subset of $M \times M$ such
that, for every pair $(x,y) \in \Omega$ with $x\neq y$, any
minimizing geodesic between $x$ and $y$ is nonsingular. Then the
distance function $d_{SR}$ (or equivalently $d_{SR}^2$) is locally
semiconcave on $\Omega \setminus D$.
\end{theorem}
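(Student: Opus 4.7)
\medskip

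\noindent\textbf{Proof proposal.} The plan is to prove local semiconcavity of $d_{SR}^2$ at every $(\bar x,\bar y)\in\Omega\setminus D$ by constructing smooth local upper support functions with a uniform $C^2$ bound, touching $d_{SR}^2$ at each base point; this is the standard characterization of local semiconcavity (cf.\ Appendix A).

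\smallskip

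\noindent\textbf{Step 1 (spreading non-singularity to a neighborhood).} Pick a compact neighborhood $K\subset\Omega\setminus D$ of $(\bar x,\bar y)$. Apply Proposition \ref{propcompact} to the set $\mathcal{K}$ of minimizing horizontal paths joining pairs of points in $K$: it is compact in $W^{1,2}([0,1],M)$. If minimizers between nearby pairs were singular, one could extract a $W^{1,2}$-limit $\gamma$ joining points of $K$ which, via Propositions \ref{propsing1}--\ref{propsing2} (after normalizing the abnormal lifts $\psi_n$ and passing to a limit in $\Delta^\perp$), would still be singular and minimizing between two distinct points of $\Omega$. This contradicts the hypothesis. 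Thus, shrinking $K$, every minimizing geodesic between points in some open neighborhood $W$ of $(\bar x,\bar y)$ is nonsingular, hence admits a normal extremal lift by Proposition \ref{normal}.

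\smallskip

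\noindent\textbf{Step 2 (local parametrization and smooth barrier).} Using compactness of $\mathcal{K}$, fix an open set $\mathcal{V}\subset M$ containing the images of all such minimizers, with a global orthonormal frame $f_1,\dots,f_m$ of $\Delta|_\mathcal{V}$ and trivializing coordinates on $T^*M|_\mathcal{V}$; the optimal transport problem reads: minimize $C(u)=\int_0^1|u(t)|^2\,dt$ over admissible $u\in L^2([0,1],\R^m)$ with $\mathrm{E}_x(u)=y$. Fix $(x_0,y_0)\in W$ and a minimizing control $u_0$ joining them; since $u_0$ is nonsingular, $d\mathrm{E}_{x_0}(u_0):L^2([0,1],\R^m)\to T_{y_0}M$ is surjective. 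By a standard implicit function argument on Hilbert spaces (using e.g.\ the Moore--Penrose right inverse $d\mathrm{E}^*(d\mathrm{E}\,d\mathrm{E}^*)^{-1}$, which depends smoothly on $(x,u)$), there is a smooth map $(x,y)\mapsto u(x,y)$ on a neighborhood of $(x_0,y_0)$ with $u(x_0,y_0)=u_0$ and $\mathrm{E}_x(u(x,y))=y$. Setting
\[
F_{x_0,y_0}(x,y):=C(u(x,y))
\]
gives a $C^\infty$ function with $F_{x_0,y_0}(x_0,y_0)=C(u_0)=d_{SR}^2(x_0,y_0)$ and $F_{x_0,y_0}(x,y)\geq d_{SR}^2(x,y)$ on its domain.

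\smallskip

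\noindent\textbf{Step 3 (uniform $C^2$-bound and conclusion).} The key point is that the $C^2$-norm of $F_{x_0,y_0}$, and the radius of its domain, can be chosen uniformly as $(x_0,y_0)$ varies in a small neighborhood of $(\bar x,\bar y)$ and as $u_0$ varies among minimizing controls. This is where compactness enters decisively: the family $\mathcal{K}$ of minimizing paths joining points of $\overline W$ is compact, hence the operators $d\mathrm{E}_{x_0}(u_0)d\mathrm{E}_{x_0}(u_0)^*$ are uniformly invertible, the right inverses are uniformly bounded, and all the smooth ingredients ($f_i$, their flows, $C$) admit uniform $C^k$ estimates on the compact set where the construction takes place. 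Therefore there exist $\rho>0$ and a constant $C^*$ such that for every $(x_0,y_0)$ in a small neighborhood of $(\bar x,\bar y)$ and every minimizing $u_0$, $F_{x_0,y_0}$ is defined on $B((x_0,y_0),\rho)$ with $\|D^2 F_{x_0,y_0}\|\leq C^*$. Since at each point $(x_0,y_0)$ we have a smooth function from above with value $d_{SR}^2(x_0,y_0)$ and a uniformly controlled second derivative, $d_{SR}^2$ satisfies the local semiconcavity criterion on a neighborhood of $(\bar x,\bar y)$; covering $\Omega\setminus D$ by such neighborhoods finishes the proof. Semiconcavity of $d_{SR}$ then follows from semiconcavity of $d_{SR}^2$ on $\Omega\setminus D$, where $d_{SR}>0$.

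\smallskip

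\noindent\textbf{Main obstacle.} The delicate part is Step 3: obtaining uniform $C^2$ estimates on the barriers $F_{x_0,y_0}$ independently of the choice of minimizing control (minimizers may be non-unique). The uniformity must come from compactness of $\mathcal{K}$ together with a canonical, smoothly varying choice of right inverse for $d\mathrm{E}$; this in turn rests crucially on the hypothesis that \emph{every} minimizer in $\Omega\setminus D$ is regular, as argued in Step 1.
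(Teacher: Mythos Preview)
Your proposal is correct and follows essentially the same strategy as the paper: build smooth upper barriers for $d_{SR}^2$ at every base point via surjectivity of the endpoint differential, then invoke compactness of minimizers (Proposition~\ref{propcompact}) to get uniform $C^2$ control and conclude by the standard semiconcavity criterion (Lemma~\ref{lem0}). Two minor remarks: your Step~1 is unnecessary, since the hypothesis already asserts that \emph{every} minimizer joining distinct points of $\Omega$ is nonsingular (there is nothing to ``spread''); and where you use an infinite-dimensional implicit function argument with the Moore--Penrose right inverse, the paper instead selects $n$ control variations to reduce to a finite-dimensional inverse function theorem, which makes the quantitative uniformity in Step~3 slightly more transparent but is otherwise equivalent.
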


Since Theorem \ref{THMsc} plays a crucial role in the present
paper and does not appear in this general form in \cite{cr08}, we
prefer to give a sketch of its proof. We refer the  reader to
\cite{cr08,riffordbook} for more details.

\begin{proof}
Let us fix $(x,y)\in \Omega \setminus D$ and show that $d_{SR}$ is
semiconcave in a neighborhood of $(x,y)$ in $M\times M \setminus
D$. Let $\mathcal{U}_x$ and $\mathcal{U}_y$ be two compact
neighborhoods of $x$ and $y$ such that $\mathcal{U}_x \times
\mathcal{U}_y \subset \Omega \setminus D$.  Denote by
$\mathcal{K}$ the set of minimizing horizontal paths $\gamma$ in
$W_{\Delta}^{1,2}([0,1], \R^m)$ such that $\gamma(0) \in
\mathcal{U}_x$ and $ \gamma(1)\in \mathcal{U}_y$. Thanks to
Proposition \ref{propcompact}, $\mathcal{K}$ is a compact subset
of $W^{1,2}([0,1],M)$. Let $(x',y') \in \mathcal{U}_x \times
\mathcal{U}_y$ be fixed. Since $(M,d_{SR})$ is assumed to be
complete, there exists a sub-Riemannian minimizing geodesic
$\gamma_{x',y'}$ between $x'$ and $y'$. Moreover by assumption it
is nonsingular. As before we can parametrize $\Delta$ by a family
of smooth orthonormal vector fields along $\gamma_{x',y'}$, and we
denote by $u^{x',y'}$ the control in $L^2([0,1],\R^m)$
corresponding to $\gamma_{x',y'}$. Since $u^{x',y'}$ is
nonsingular,  there are $n$ linearly independent controls
$v^{x',y'}_1,\ldots v^{x',y'}_n$ in  $L^2([0,1],\R^m)$ such that
the linear operator
$$
\begin{array}{rcl}
\mathcal{E}^{x',y'} :  \R^n & \longrightarrow & \R^n \\
\alpha & \longmapsto & \sum_{i=1}^m \alpha_i d E_{x'} \left(u^{x',y'}\right) \left( v_i^{x',y'}\right)
\end{array}
$$
is invertible.  Set
$$
\begin{array}{rcl}
\mathcal{F}^{x',y'} :  \R^n \times \R^n & \longrightarrow & \R^n \times \R^n\\
(z,\alpha) & \longmapsto & \left(z, E_z \left( u^{x',y'} + \sum_{i=1}^m \alpha_i v_i^{x',y'}\right) \right)
\end{array}
$$
This mapping is well-defined and smooth in a neighborhood of $(x',0)$, satisfies
$$
\mathcal{F}^{x',y'} (x',0) =(x',y'),
$$
and its differential at $(x',0)$ is invertible. Hence, by the
Inverse Function Theorem, there are an open ball
$\mathcal{B}^{x',y'}$ centered at $(x',y')$ in $\R^n \times \R^n$
and a function $\mathcal{G}^{x',y'} :\mathcal{B}^{x',y'}
\rightarrow \R^n \times \R^n$ such that
$$
\mathcal{F}^{x',y'} \circ \mathcal{G}^{x',y'} (z,w) = (z,w) \qquad \forall (z,w) \in \mathcal{B}^{x',y'}.
$$
Denote by $\left(\alpha^{x',y'}\right)^{-1}$ the second component
of $\mathcal{G}^{x',y'}$. From the definition of the
sub-Riemannian energy between two points we infer that for any
$(z,w) \in \mathcal{B}^{x',y'}$ we have
$$
e_{SR} (z,w) \leq \left\| u^{x',y'} + \sum_{i=1}^m \left( \left( \alpha^{x',y'} \right)^{-1} (z,w) \right)_i  v_i^{x',y'}   \right\|_{L^2}^2.
$$
Set
$$
\phi^{x',y'} (z,w) := \left\| u^{x',y'} + \sum_{i=1}^m \left( \left( \alpha^{x',y'}\right)^{-1} (z,w) \right)_i \right\|_{L^2} \qquad \forall (z,w) \in \mathcal{B}^{x',y'}.
$$
We conclude that, for every $(x',y') \in \mathcal{U}_x \times
\mathcal{U}_y$, there is a smooth function $\phi^{x',y'}$ such
that $d_{SR} (z,w)\leq \phi^{x',y'} (z,w)$ for any $(z,w)$ in
$\mathcal{B}^{x',y'}$. By compactness of $\mathcal{K}$ and thanks
to a quantitative version of the Inverse Function Theorem, the
$C^{1,1}$ norms of the functions $\phi^{x',y'}$ are uniformly
bounded and the radii of the balls $\mathcal{B}^{x',y'}$ are
uniformly bounded from below by a positive constant for $ x',y' $
in $\mathcal{U}_x \times \mathcal{U}_y$. Then the result follows
from Lemma \ref{lem0}.
\end{proof}

\subsection{Sub-Riemannian cut locus}\label{para:cut}

For every $x\in M$ the \textit{singular set} of $d_{SR}(x,\cdot)$,
denoted by $\Sigma \left(d_{SR}(x,\cdot)\right)$,  is defined as
the set of points $y \neq x \in M$ where $d_{SR}(x,\cdot)$ (or
equivalently $d_{SR}^2$)  is not continuously differentiable. The
\textit{cut-locus} of $x$ is defined as
$$
\mbox{Cut}_{SR}(x) := \overline{\Sigma \left( d_{SR}(x,\cdot) \right)}
$$
and the \textit{global cut-locus} of $M$ as
$$
\mbox{Cut}_{SR}(M) := \left\{ (x,y) \in M \ \vert \ y \in \mbox{Cut}_{SR} (x) \right\}.
$$
In contrast with the Riemannian case, the sub-Riemannian global
cut-locus of $M$ always contains the diagonal (see
\cite{agrachev98}). A covector $p\in T_x^*M$ is said to be
\textit{conjugate} with respect to $x\in M$ if the mapping
$\exp_x$ is singular at $p$, that is if $d \exp_{x} (p)$ is
singular. For every $x\in M$ we denote by $\mbox{Conj}_{min}(x)$
the set of points $y \in M \setminus \{x\}$ for which there is
$p\in T_x^*M$ which is conjugate with respect to $x$, and such
that
$$
\exp_{x} (p) = y \qquad \mbox{and} \qquad e_{SR}(x,y) = 2 H(x,p).
$$
The following result holds (see \cite{rt07,riffordbook}):

\begin{proposition}
\label{cut1}
Let $\Omega$ be an open subset of $M\times M$. Assume that $\Omega$ is totally geodesically convex  and that the sub-Riemannian distance is locally semiconcave on $\Omega \setminus D$. Then, for every $x\in M$, we have
$$
\bigl( \{x\} \times \mbox{Cut}_{SR}(x) \bigr) \cap \Omega= \bigl( \{x\} \times \left( \Sigma \left( d_{SR}(x,\cdot) \right) \cup \mbox{Conj}_{min}(x) \cup \{x\}\right)  \bigr) \cap \Omega.
$$
Moreover, the set $\left( \{x\} \times \mbox{Cut}_{SR}(x)\right)  \cap \Omega $ has Hausdorff dimension $\leq n-1$, and the function $d_{SR}$ is of class $C^{\infty}$ on the open set $\Omega \setminus \mbox{Cut}_{SR}(M)$.
\end{proposition}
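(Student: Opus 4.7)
The plan is to fix $x\in M$, write $\phi := d_{SR}^2(x,\cdot)$, which is locally semiconcave on $\Omega_x \setminus \{x\}$ where $\Omega_x := \{y : (x,y)\in\Omega\}$, and split the set equality into four inclusions. Of these, $\Sigma(d_{SR}(x,\cdot)) \cap \Omega_x \subset \mbox{Cut}_{SR}(x)$ is immediate from the definition of closure, and $\{x\}\cap\Omega_x \subset \mbox{Cut}_{SR}(x)$ is the standard sub-Riemannian fact (ball-box / Agrachev) that $d_{SR}(x,\cdot)$ is never $C^1$ at the base point, so $x\in\overline{\Sigma}$. The substantive content lies in the two remaining inclusions, together with the dimension and smoothness claims.

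For the inclusion $\mbox{Cut}_{SR}(x) \cap \Omega_x \subset (\Sigma \cup \mbox{Conj}_{min}(x) \cup \{x\})\cap \Omega_x$, I would take $y \in \mbox{Cut}_{SR}(x)\cap\Omega_x$ with $y\neq x$ and $y\notin \Sigma$. Differentiability of $\phi$ at $y$, combined with Proposition \ref{propexp1}, yields a unique minimizing geodesic from $x$ to $y$, with some initial covector $q_0\in T_x^*M$. Pick $y_k\to y$ in $\Sigma$; by local semiconcavity the superdifferential $\partial^+\phi(y_k)$ is not a singleton, and by the reachable-gradient representation (Appendix A) I can select two distinct reachable gradients $p_k^1\neq p_k^2$ at $y_k$, each arising as a limit of $d\phi$ at differentiability points approaching $y_k$. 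Via Proposition \ref{propexp1} each such reachable gradient corresponds to an initial covector $q_k^i\in T_x^*M$ with $\exp_x(q_k^i) = y_k$ along a minimizing geodesic. The compactness of minimizing curves (Proposition \ref{propcompact}) lets me pass to limits: $q_k^i\to \bar q^i$, with $\exp_x(\bar q^i) = y$ minimizingly, and uniqueness of the minimizer at the differentiable point $y$ forces $\bar q^1 = \bar q^2 = q_0$. Since the $q_k^1,q_k^2$ are distinct but both converge to $q_0$ with the same image under $\exp_x$, the map $\exp_x$ fails to be locally injective at $q_0$, hence $d\exp_x(q_0)$ must be singular and $y\in\mbox{Conj}_{min}(x)$.

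For the reverse inclusion $\mbox{Conj}_{min}(x)\cap\Omega_x \subset \mbox{Cut}_{SR}(x)\cap\Omega_x$, I argue by contradiction: if $y\in\mbox{Conj}_{min}(x)\cap\Omega_x$ lies outside $\mbox{Cut}_{SR}(x)$, then $\phi$ is $C^1$ on some neighborhood $V\subset\Omega_x$ of $y$, and Proposition \ref{propexp1} provides a continuous right inverse $q:V \to T_x^*M$ of $\exp_x$ with $q(y) = q_0$ a critical point. I would then pick $v\in \ker d\exp_x(q_0)\setminus\{0\}$ and examine the perturbed path $q_s := q_0 + sv$: it satisfies $\exp_x(q_s) = y + O(s^2)$ while $2H(x,q_s)$ varies to leading order in $s$ along a direction transverse to the $\exp_x$-fibre, by a Jacobi-field computation exploiting that the Hamiltonian flow preserves both $H$ and the symplectic form. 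Comparing the universal inequality $\phi(\exp_x(q_s)) \leq 2H(x,q_s)$ with the $C^1$-Taylor expansion of $\phi$ at $y$ then yields a sign conflict for one of the signs of $s$, producing non-differentiability of $\phi$ arbitrarily close to $y$ and contradicting $y\notin\mbox{Cut}_{SR}(x)$. This is the main technical obstacle; it is where the proof really uses totally geodesic convexity of $\Omega$ (to ensure the varied endpoints stay in $V$) and refines the variational arguments of \cite{rt07,riffordbook}.

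Finally, for the dimension and regularity claims: $\Sigma\cap\Omega_x$ has Hausdorff dimension $\leq n-1$ as the singular locus of a locally semiconcave function on an $n$-manifold (Appendix A); $\mbox{Conj}_{min}(x)\cap\Omega_x$ lies in the $\exp_x$-image of the critical set of a smooth map between $n$-manifolds, which a Sard-stratification argument shows also has Hausdorff dimension $\leq n-1$; and $\{x\}$ is zero-dimensional, giving the bound. For smoothness of $d_{SR}$ on $\Omega\setminus\mbox{Cut}_{SR}(M)$, I note that there $\phi$ is $C^1$ near $y$ and $d\exp_x(q(y))$ is invertible, so the inverse function theorem makes $q$ smooth, hence $\phi(z) = 2H(x,q(z))$ is $C^\infty$ in $z$; joint $C^\infty$-regularity in $(x,y)$ then follows from smooth dependence of the Hamiltonian flow on both endpoint and initial data.
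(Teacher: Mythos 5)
The paper does not actually prove Proposition~\ref{cut1}; it invokes it with the citation ``see \cite{rt07,riffordbook}''.  So there is no in-text proof to compare against, and your proposal has to stand on its own.  Directions~(1), (2), and~(3) in your decomposition are correct, and in particular your argument for $\mbox{Cut}_{SR}(x)\cap\Omega_x\subset\Sigma\cup\mbox{Conj}_{min}(x)\cup\{x\}$ (reachable gradients at singular points $y_k\to y$, limiting via Proposition~\ref{propcompact}, failure of local injectivity of $\exp_x$ at $q_0$) is a clean and valid route.

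The gap is in direction~(4), the inclusion $\mbox{Conj}_{min}(x)\cap\Omega_x\subset\mbox{Cut}_{SR}(x)$.  You take $v\in\ker d\exp_x(q_0)\setminus\{0\}$, set $q_s:=q_0+sv$, and claim that $2H(x,q_s)$ has a nonzero first-order term in $s$, producing a ``sign conflict for one of the signs of $s$'' against $\phi(\exp_x(q_s))\leq 2H(x,q_s)$.  But the first-order term is $\frac{d}{ds}\big|_{s=0}2H(x,q_s)=2\,d_pH(x,q_0)\cdot v$, and this necessarily vanishes.  Indeed, the function $p\mapsto 2H(x,p)-\phi(\exp_x(p))$ is $\geq 0$ on $T_x^*M$ with a zero at $q_0$, and $\phi$ is differentiable at $y=\exp_x(q_0)$; the first-order optimality condition therefore gives $d\phi(y)\circ d\exp_x(q_0)=2\,d_pH(x,q_0)$, and applying this identity to $v\in\ker d\exp_x(q_0)$ yields $d_pH(x,q_0)\cdot v=0$.  (This is precisely the cotangent form of the Gauss lemma: $\ker d\exp_x(q_0)$ lies in the tangent space to the level set of $H$ through $q_0$.)  Consequently both sides of the inequality agree to order $s$, and the comparison reduces to $O(s^2)\leq s^2\cdot 2H(x,v)$, which carries no sign information.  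The inclusion $\mbox{Conj}_{min}(x)\subset\mbox{Cut}_{SR}(x)$ is genuinely the hard content of the proposition; proving it requires a second-order (or global) argument — for instance an analysis of the Lagrangian submanifold $\Phi_1(T_x^*M)$ near the focal covector, or a variation that prolongs the geodesic past $y$ and uses loss of minimality — and cannot be obtained by the first-order perturbation you propose.

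Two minor remarks.  For the Hausdorff bound on $\mbox{Conj}_{min}(x)\cap\Omega_x$, citing ``Sard-stratification'' is not sufficient: the Morse--Sard theorem only gives Lebesgue-measure zero for the set of critical values, not a Hausdorff-dimension bound; a finer Federer-type rank stratification, or (as is more natural here) deducing the dimension bound directly from the semiconcavity of $d_{SR}(x,\cdot)$ rather than via $\mbox{Conj}_{min}$, is needed.  The smoothness argument on $\Omega\setminus\mbox{Cut}_{SR}(M)$ (continuous right inverse $q$ becomes smooth by the inverse function theorem once $d\exp_x(q_0)$ is invertible, then $d_{SR}^2(x,z)=2H(x,q(z))$) is fine.
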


An important property of the Riemannian distance function is that it fails to be semiconvex at the cut locus (see \cite[Proposition 2.5]{cems01}). This property plays a key role in the proof of the differentiability of the transport map. We do not know if that property holds in the sub-Riemannian case:\\

\textbf{Open problem.} Assume that $d_{SR}$ is locally semiconcave
on $M\times M \setminus D$. Let $x,y \in M$, and assume that there
exists a function $\phi:M \rightarrow \R$ twice differentiable at
$y$ such that
$$
\phi(y) = d_{SR}^2(x,y) \quad \mbox{and} \quad d_{SR}^2 (x,z) \geq
\phi (z) \quad \forall z\in M.
$$
Is it true that $y \notin \mbox{Cut}_{SR}(x)$?

\subsection{Locally lipschitz regularity of the sub-Riemannian distance}
\label{subsect:locLip}

Since any locally semiconcave function is locally Lipschitz,
Theorem \ref{THMsc} above gives a sufficient condition that
insures the Lipschitz regularity of $d_{SR}^2$ out of the
diagonal. In \cite{al07} Agrachev and Lee show that, under some
stronger assumption, one can prove global Lipschitz regularity. A
horizontal path $\gamma :[0,1] \rightarrow M$ will be called a
\textit{Goh path} if it admits an abnormal lift
$\psi:[0,1]\rightarrow \Delta^\perp$ which annihilates
$[\Delta,\Delta]$, that is, for every $t\in [0,1]$ and every local
parametrization of $\Delta$ by smooth vector fields $f_1,\ldots
,f_m$ in a neighborhood of $\gamma(t)$, we have
$$
\psi (t) \cdot \bigl( [f_i,f_j](\gamma(t)) \bigr)=0 \qquad \forall
i,j=1,\ldots, m.
$$
Note that if the path $\gamma$ is constant on $[0,1]$, it is a Goh path if and only if there is a differential form $p\in T_{\gamma(0)}^*M$ satisfying
$$
p \cdot f_i(\gamma(0))= p\cdot [f_i,f_j](\gamma(0)) =0 \qquad \forall i,j=1,\ldots ,m,
$$
where $f_1,\ldots ,f_m$ is as above a parametrization of $\Delta$ in a neighborhood of $\gamma(0)$.  Agrachev and Lee proved the following result (see \cite[Theorem 5.5]{al07}):

\begin{theorem}
\label{THMlip} Let $\Omega$ be an open subset of $M \times M$ such
that any sub-Riemannian minimizing geodesic joining two points of
$\Omega$ is not a Goh path. Then the function $d_{SR}^2$ is
locally Lipschitz on $\Omega  \times \Omega$.
\end{theorem}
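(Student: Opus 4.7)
The plan mirrors the proof of Theorem \ref{THMsc}, with the first-order Inverse Function Theorem construction replaced by a second-order open-mapping argument for the end-point mapping in the singular case. I would fix $(x_0,y_0)\in\Omega$, compact neighborhoods $\mathcal{U}_x\ni x_0$, $\mathcal{U}_y\ni y_0$ with $\mathcal{U}_x\times\mathcal{U}_y\subset\Omega$, and invoke Proposition \ref{propcompact} to obtain $W^{1,2}$-compactness of the family of minimizing geodesics with endpoints in $\mathcal{U}_x\times\mathcal{U}_y$. The target is to produce, for every $(x',y')\in \mathcal{U}_x\times\mathcal{U}_y$, a function $\phi^{x',y'}$ defined on a ball $\mathcal{B}^{x',y'}$ of uniform radius about $(x',y')$, touching $d_{SR}^2$ at $(x',y')$ from above, and Lipschitz with a uniform constant. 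This is enough: writing
\[
d_{SR}^2(z,w)-d_{SR}^2(z',w')\leq \phi^{z',w'}(z,w)-\phi^{z',w'}(z',w')\leq L\|(z,w)-(z',w')\|
\]
and symmetrizing gives the Lipschitz property of $d_{SR}^2$ on $\Omega$.

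Fix then a minimizing geodesic $\gamma$ joining $x'$ to $y'$, with corresponding control $u\in L^2([0,1],\R^m)$ in a local orthonormal parametrization $(f_1,\ldots,f_m)$ of $\Delta$. When $\gamma$ is nonsingular, the construction in the proof of Theorem \ref{THMsc} applies verbatim and yields a smooth $\phi^{x',y'}$ on a ball of uniform radius. When $\gamma$ is singular, the hypothesis is that $\gamma$ is not a Goh path, so every abnormal lift $\psi$ of $\gamma$ satisfies $\psi(t_0)\cdot[f_i,f_j](\gamma(t_0))\neq 0$ for some $t_0\in[0,1]$ and some indices $i,j$. This is precisely the nondegeneracy that lets the \emph{second} intrinsic derivative of $\textrm{E}_{x'}$ at $u$ cover the cokernel of $d\textrm{E}_{x'}(u)$, producing a second-order open-mapping statement. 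Combining first-order variations $v_1,\ldots,v_k$ spanning $\mathrm{Im}\, d\textrm{E}_{x'}(u)$ with second-order variations $w_1,\ldots,w_{n-k}$ selected using the Goh bracket, and exploiting cancellations between perturbations with opposite second-order contributions, one shows that endpoints $y''$ near $y'$ are reached by admissible controls $u+\sum_i\alpha_i v_i+\sum_j\beta_j w_j$ whose $L^2$-cost differs from $\|u\|_{L^2}^2$ by an amount \emph{linear} (rather than $1/2$-Hölder) in $\|y''-y'\|$. Allowing $x''$ to vary near $x'$ in the same way then yields the required $\phi^{x',y'}$.

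The main obstacle is precisely this linear-in-$\|y''-y'\|$ control of the cost: a naive second-order expansion would force amplitudes $\beta\sim\sqrt{\|y''-y'\|}$, which only produces a $1/2$-Hölder upper bound. Only a careful cancellation between opposite-signed perturbations, supplied by the bilinear second-order analysis of the end-point mapping developed in \cite[Theorem 5.5]{al07}, upgrades this to a genuine Lipschitz estimate. Once this quantitative second-order open-mapping theorem is established, the remaining compactness-and-patching argument is the same as in the proof of Theorem \ref{THMsc}: the $W^{1,2}$-compactness of the family of minimizing geodesics between points of $\mathcal{U}_x\times\mathcal{U}_y$ ensures that the radii of the balls $\mathcal{B}^{x',y'}$ and the Lipschitz constants of the functions $\phi^{x',y'}$ can be chosen uniform, which concludes the proof.
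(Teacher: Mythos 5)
The paper does not actually prove Theorem~\ref{THMlip}: it is introduced with the sentence ``Agrachev and Lee proved the following result (see \cite[Theorem 5.5]{al07})'' and is used as a black box, so there is no internal proof to compare against. Your proposal is therefore being measured against the cited source rather than against this paper.

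That said, your outline captures the right overall shape (compactness of the family of minimizers via Proposition~\ref{propcompact}, uniformly Lipschitz functions touching $e_{SR}=d_{SR}^2$ from above at each $(x',y')$, and a second--order open--mapping argument at singular controls using the non-Goh hypothesis), and you correctly flag the genuine obstacle: a second--order open mapping theorem gives reachability with amplitude $\beta\sim\sqrt{\|y''-y'\|}$, which in general yields only a $\tfrac12$-H\"older, not Lipschitz, upper bound on the energy. But this is precisely the point where your argument stops being a proof. You assert that ``a careful cancellation between opposite-signed perturbations'' upgrades the bound to linear, and you justify it by citing the bilinear analysis of \cite[Theorem 5.5]{al07} --- which is the very theorem being proved. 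Concretely, what is missing is: (a) an explanation of why the cost increment $\|u+\beta w\|_{L^2}^2-\|u\|_{L^2}^2 = 2\beta\langle u,w\rangle_{L^2}+\beta^2\|w\|_{L^2}^2$ is controlled by $\beta^2$ rather than $\beta$, i.e.\ why the first-order term $\langle u,w\rangle$ can be made to vanish or cancel (this is not automatic for a strictly abnormal minimizer, since $E_{x'}^{-1}(y')$ need not be a manifold near $u$ and one cannot simply quote first-order optimality on a submanifold); and (b) a proof that the failure of the Goh condition forces, for \emph{every} covector $\lambda$ annihilating the image of $d\mathrm{E}_{x'}(u)$, the scalar form $\lambda\cdot\mathrm{Hess}\,\mathrm{E}_{x'}(u)$ restricted to $\ker d\mathrm{E}_{x'}(u)$ to be indefinite (so that displacements of either sign in the cokernel are reachable with $\beta^2$-controlled cost), together with a quantitative version uniform over the compact family $\mathcal{K}$. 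As it stands the argument is a plausible reconstruction of the cited strategy, but it defers exactly the step it identifies as the main difficulty, so it cannot be regarded as a self-contained proof.
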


\section{Proofs of the results}
\label{sect:proof}

\subsection{Proof of Theorem \ref{mainthm}}

Let us first prove (i). We easily see that $\Mov^\phi$ coincides with the set
$$
\{x\in M\mid
\phi(x) + \phi^c(x) < 0\}.
$$
Thus, since both $\phi$ and $\phi^c$ are continuous,  $\Mov^\phi$ is open. Let us now prove that $\phi$ is locally semiconcave (resp. locally Lipschitz) in an open neighborhood of $\Mov^\phi\cap\supp(\mu)$. Let $x \in \Mov^\phi\cap\supp(\mu)$ be fixed. Since $x \not\in \p^c\phi(x)$, there is $r>0$ such that $d_{SR}(x,y) > r$ for any $y \in
\p^c\phi(x)$. In addition, since the set $\p^c\phi$ is closed in $M\times M$ and $\supp(\mu\times\nu)\subset\O$, there
exists a neighborhood $\mathcal{V}_x$ of $x$ which is included in $\Mov^\phi\cap\pi_1(\O)$ and such that
$$
d_{SR}(x,w) > r \qquad \forall z \in \mathcal{V}_x, \quad \forall
w \in \p^c\phi(z).
$$
Let $\phi_{x,r}: M \rightarrow \R$ be the function defined by
$$
\phi_{x,r}(z):=\inf  \left\{ d_{SR}^2(z,y) - \phi^c(y) \mid y
\in \supp(\nu), \, d_{SR}(z,y) > r \right\}.
$$
We recall that $\supp(\mu\times\nu)\subset\O$ and that $d_{SR}^2$
is locally semiconcave (resp. locally Lipschitz) in $\O \setminus
D$. Thus, up to considering a smaller $\mathcal{V}_x$, we easily get that
the function $\phi_{x,r}$ is locally semiconcave (resp. locally Lipschitz) in
$\mathcal{V}_x$. Since $\phi=\phi_{x,r}$ in $\mathcal{V}_x$, (i)
is proved.

To prove (ii), we observe that it suffices to show the result for
$x$ belonging to an open set $\mathcal{V} \subset M$ on which the
horizontal distribution $\Delta(x)$ is parametrized by a
orthonormal family a smooth vector fields
$\{f_{1},\ldots,f_{m}\}$. Moreover, up to working in charts, we
can assume that $\mathcal{V}$ is a subset of $\R^n$.

First of all we remark that, since all functions $z \mapsto
d_{SR}^2(z,y) - \phi^c(y)$ are locally uniformly Lipschitz with
respect to the sub-Riemannian distance when $y$ varies in a
compact set, also $\phi$ is locally Lipschitz with respect to
$d_{SR}$. Up to a change of coordinates in $\R^n$, we can assume
that the vector fields $f_i$ are of the form
$$
f_i=\frac{\p}{\p x_i} +\sum_{j=m+1}^n a_{ij}(x)\frac{\p}{\p x_j}
\qquad \forall i=1,\ldots,m,
$$
with $a_{ij} \in C^{\infty}(\R^n)$. Therefore, thanks to
\cite[Theorem 3.2]{monsc01}, for a.e. $x \in \mathcal{V}$, $\phi$
is differentiable with respect to all vector fields $f_i$ for a.e.
$x \in \mathcal{V}$, and
\begin{equation}
\label{horizzdiff} \phi(y)-\phi(x)-\sum_{i=1}^m
f_i\phi(x)(y_i-x_i)=o\bigl(d_{SR}(x,y)\bigr) \qquad \forall y \in \mathcal{V}.
\end{equation}
Recalling that $\mu$ is absolutely continuous, we get that
\eqref{horizzdiff} holds at $\mu$-a.e. $x \in \mathcal{V}$.
Thus it suffices to prove that $\p^c\phi(x)=\{x\}$ for all such
points.

Let us fix such an $x$. We claim that
\begin{equation}
\label{claim} f_i \phi (x) = 0 \qquad \forall i=1,\cdots m.
\end{equation}
Indeed, fix $i\in \{1,\cdots,m\}$ and denote by $\g_i^x(t)
:(-\e,\e)\rightarrow M$ the integral curve of the
vector field $f_i$ starting from $x$, i.e.
$$
\left\{
\begin{array}{l}
\dot \g_i^x(t)=f_i(\g_i^x(t)) \qquad \forall t\in (-\epsilon,\epsilon) \\
\g_i^x(0)=x.
\end{array}
\right.
$$
By the assumption on $x$, there is a real number $\ell_i$ such
that
$$
\lim_{t \rightarrow 0} \frac{\phi(\g_i^x(t)) - \phi (x)}{t}
=\ell_i.
$$
By construction, the curve $\g_i^x$ is horizontal with respect to
$\Delta$. Thus, since $g(\dot\g_i^x(t),\dot\g_i^x(t))=1$ for any
$t$, we have
$$
d_{SR}(x,\g_i^x(t)) \leq |t| \qquad \forall t\in
(-\e,\e).
$$
This gives
$$
\phi(\g_i^x(t)) \leq \phi(x) + d_{SR}^2(\g_i^x(t),x) \leq \phi(x) + t^2,
$$
which implies that $\ell_i=0$ and proves the claim.

Assume now by contradiction that there exists a point $y \in
\p^c\phi(x)\setminus\{x\}$, with $(x,y)\in \O$. Then the function
$$
z \mapsto \phi(z)-d_{SR}^2(z,y) \leq \phi^c(x)
$$
attains a maximum at $x$. Let $\g_{x,y}:[0,1]\to M$ denotes a
minimizing geodesic from $x$ to $y$. Then
$$
\phi(\g_{x,y}(t))-d_{SR}^2(\g_{x,y}(t),y) \leq
\phi(x)-d_{SR}^2(x,y) \qquad \forall t \in [0,1],
$$
or equivalently
$$
\phi(\g_{x,y}(t))-\phi(x)\leq
d_{SR}^2(\g_{x,y}(t),y)-d_{SR}^2(x,y) \qquad \forall t \in [0,1].
$$
Observe now that, by \eqref{horizzdiff} together with
\eqref{claim}, we have
$$
\phi(\g_{x,y}(t))-\phi(x)=o\bigl(d_{SR}(\g_{x,y}(t),x)
\bigr)=o\bigl(td_{SR}(x,y)\bigr).
$$
On the other hand, $d_{SR}^2(\g_{x,y}(t),y)=(1-t)^2d_{SR}^2(x,y)$.
Combining all together, for all $t \in [0,1]$ we have
\begin{align*}
o\bigl(td_{SR}(x,y)\bigr)=\phi(\g_{x,y}(t))-\phi(x)
&\leq d_{SR}^2(\g_{x,y}(t),y)-d_{SR}^2(x,y)\\
&=-2td_{SR}^2(x,y)+o\bigl(td_{SR}(x,y)\bigr),
\end{align*}
that is
$$
2t d_{SR}^2(x,y) \leq o\bigl(td_{SR}(x,y)\bigr) \qquad \forall t
\in [0,1].
$$
As $x \neq y$, this is absurd for $t$ small enough, and
the proof of (ii) is completed.

Since $\supp(\mu\times\nu)\subset\O$, we immediately have that any optimal plan $\gamma$ is concentrated on $\p^c\phi\cap \O$.
Moreover, combining (i) and (ii), we obtain that $\p^c\phi(x))\cap \supp(\nu)$ is a singleton for $\mu$-a.e. $x$.
This easily gives existence and uniqueness of the optimal transport map.

To prove the formula for $T(x)$, we have to show that
$$
\p^c\phi(x)\cap \supp(\nu)=\exp_x \left(- \frac{1}{2}d\phi(x) \right)
$$
for all $x\in \Mov^\phi\cap\supp(\mu)$ where $\phi$ is differentiable. This is a consequence of Proposition
\ref{propexp1} applied to the function $z\mapsto \phi(z)+\phi^c(y)$ at the point $x$.
Moreover, again by Proposition \ref{propexp1}, the geodesic from $x$ to $T(x)$ is unique for $\mu$-a.e. $x\in \Mov^\phi\cap\supp(\mu)$.
Since $T(x)=x$ for $x\in \Stat^\phi\cap\supp(\mu)$, the geodesic is clearly unique also in this case.

\subsection{Proof of Theorem \ref{thmrectif}}
We will prove only (ii), as all the rest follows as in the proof of Theorem \ref{mainthm}.

Let us consider the ``bad'' set defined by
$$
\Bad:=\bigl\{x \in \Stat^\phi\cap\supp(\mu) \mid
\left( \p^c\phi(x) \setminus \{x\} \right) \cap \supp(\nu)\neq\emptyset\bigr\}.
$$
We have to show that $\Bad$ is $\mu$-negligible.
For each $k \in \N$, we consider the sequence of function
constructed as follows:
$$
\phi_k(x):=\inf  \left\{ d_{SR}^2(x,y) - \phi^c(y) \mid y
\in \supp(\nu), \, d_{SR}(x,y) > 1/k \right\}.
$$
Since $\supp(\mu\times\nu) \subset \O$ and $d_{SR}^2$ is locally
semiconcave in $\O\setminus D$, the functions $\phi_k$ are
locally semiconcave in a neighborhood of $\Bad$.

Thus, by Theorem \ref{THMsingset} and the assumptions on $\mu$, there exists a Borel set $G$, with
$\mu(G)=1$, such that all $\phi_k$ are differentiable in $G$.
Since for any $x \in \Bad$ there exists $y \in \p^c\phi(x) \setminus
\{x\}$ such that $d_{SR}(y,x)>1/k$ for some $k$, we deduce that
$$
\bigcup_{k \in \N}\{\phi=\phi_k\}=\Bad.
$$
This gives that, up to set of $\mu$-measure zero, $\Bad$ coincides with $\cup_{k \in \N} A_k$, where
$$
A_k:=\Bad \cap \{\phi=\phi_k\} \cap G.
$$
Hence, to conclude the proof, it suffices to show that
$\mu(A_k)=0$ for all $k\in \N$.

Let $x \in A_k$. Then, if $y \in \p^c\phi(x)$ and $d_{SR}(x,y)>1/k$, the function
\begin{equation}
\label{eqmaxphik}
z \mapsto \phi_k(z)-d_{SR}^2(z,y) \leq \phi^c(x)
\end{equation}
attains a maximum at $x$.
Therefore, if we show that $d\phi_k(x)=0$ for
$\mu$-a.e. $x \in A_k$, equation \eqref{eqmaxphik} together with the semiconcavity of $d_{SR}^2(z,y)$ for $z$ close to $x$
would imply that $d_{SR}^2(\cdot,y)$ is differentiable at $x$, and its differential is equal to $0$.
This would contradict Proposition \ref{PROP1}, concluding the proof.
Therefore we just need to show that $d\phi_k(x)=0$ $\mu$-a.e. in $A_k$.\\

Let $X$ be a smooth section of $\Delta$ such that $g_x(X(x),X(x))=1$ for any $x\in M$. We claim the following:\\
\textbf{Claim 1:} for $\mu$-a.e. $x \in A_k$, $d\phi_k(x)\cdot X(x)\leq 0$.\\
Since we can apply Claim 1 with a countable set of vector fields
$\{X_\ell\}_{\ell \in \N}$ such that $\{X_\ell(x)\}_{\ell \in \N}$
is dense in $\Delta(x)$ for all $x \in \supp(\mu)$, Claim 1
clearly implies that $d\phi_k(x)=0$ $\mu$-a.e. in $A_k$. Let us
prove the claim.

Let $d_g$ denote the Riemannian distance associated to the Riemannian metric $g$, and $\theta(x,t)$ denote the flow of $X$, that is the function $\theta:M\times \R \rightarrow M$ satisfying
$$
\frac{d}{dt} \theta(x,t) = X(\theta(x,t)), \qquad \theta(x,0)=x.
$$
Fix $\e>0$ small, and consider the ``cone'' around the curve $t\mapsto \theta(x,t)$ given by
$$
C_x^\e:=\Bigl\{y\in \O \mid \text{$\exists \, t \in [0,\e]$ such that $d_g\bigl(\theta(x,t),y\bigl) \leq \e t$}\Bigr\}.
$$
Moreover we define
$$
R_\e:=\bigl\{x \in \supp(\mu)\cap A_k \mid A_k \cap C_x^\e=\{x\} \bigr\}.
$$
\textbf{Claim 2:} $R_\e$ is countably $(n-1)$-rectifiable for any $\e>0$.\\
Indeed, since the statement is local, we can assume that we are in $\R^n$,
Moreover, since $X$ is smooth, we can assume that there exists $\bar v \in \R^n$
such that $C_x^\e$ contains the ``euclidean cone''
$$
\bar C_x^{\e/2}:=\Bigl\{y\in \O \mid \text{$\exists \, t \in
[0,\e/2]$ such that $|x+t\bar v-y| \leq c_0\frac{\e}{2}
t$}\Bigr\},
$$
where $c_0>0$. Thus it suffices to prove that
$$
\bar R_{\e/2}:=\bigl\{x \in \supp(\mu)\cap A_k \mid A_k \cap \bar C_x^{\e/2}=\{x\}\bigr\}
$$
is $(n-1)$-rectifiable for any $\e >0$.\\
Assume now that $z, z' \in \bar R_{\e/2}$, with $z\neq z'$.
Then, since $z\not\in \bar C_{z'}^{\e/2}$, we have
$$
|z'+t\bar v-z| >c_0\frac{\e}{2} t \qquad \forall t \in [0,\e/2],
$$
or equivalently
$$
|z-t\bar v-z'| > c_0\frac{\e}{2} t\qquad \forall t \in [0,\e/2].
$$
This implies that
$$
z' \not \in \bar C_{z}^{\e/2,-}:=\Bigl\{y\in \O \mid
\text{$\exists \, t \in [0,\e/2]$ such that $|x-t\bar v-y| \leq
c_0 \frac{\e}{2} t$}\Bigr\}.
$$
Since $z,z' \in \bar R_{\e/2}$ were arbitrary, we have proved that
for all $z\in \bar R_{\e/2}$
$$
\bar R_{\e/2}\cap \bigl(\bar C_z^{\e/2}\cup\bar C_z^{\e/2,-}\bigr)=\{z\}.
$$
By \cite[Theorem 4.1.6]{cs04} $\bar R_\e$ is countably $(n-1)$-rectifiable for any $\e>0$, and this concludes the proof of Claim 2.\\

Let us come back to the proof of Claim 1. Thanks to Claim 2 we just need to show that
$$
x \in \bigl(\supp(\mu)\cap A_k) \setminus \bigl( \cup_j R_{1/j}\bigr) \quad \Longrightarrow \quad d\phi_k(x)\cdot X(x)\leq 0.
$$
Let $x \in \bigl(\supp(\mu)\cap A_k) \setminus \bigl( \cup_j R_{1/j}\bigr)$. Then
$\phi(x)=\phi_k(x)$, and there exists a sequence of points
$\{x_j\}$ such that $x_j\neq x$ and $x_j \in A_k \cap C_x^{1/j}$ for all $j\in \N$. In particular $\phi(x_j)=\phi_k(x_j)$ for all $j \in \N$.
Since $x\in\Stat^\phi$, we have $x\in\p^c\phi(x)$, and so
$$
\phi(z) -\phi(x) \leq d_{SR}^2(z,x) \qquad \forall z \in M.
$$
Let $t_j \in [0,\frac{1}{j}]$ be such that $d_g\bigl(\theta(x,t_j),x_j\bigl) \leq \frac{1}{j} t_j$.
Then, since $d_{SR}^2$ is locally Lipschitz, we get
\begin{align*}
\phi_k(x_j) -\phi_k(x) &=\phi(x_j) -\phi(x)\leq d_{SR}^2(x_j,x)\\
&\leq 2d_{SR}^2\bigl(\theta (x,t_j),x_j\bigl) + 2d_{SR}^2\bigl( \theta(x,t_j),x\bigr)\\
&\leq Cd_g\bigl(\theta (x,t_j),x_j\bigl) + 2d_{SR}^2\bigl(\theta (x,t_j),x\bigr)\\
&\leq \frac{C}{j} t_j + 2d_{SR}^2\bigl(\theta (x,t_j),x\bigr).
\end{align*}
We now observe that, since $X$ is a unitary horizontal vector field, $d_{SR}\bigl(\theta (x,t_j),x\bigr)
\leq t_j$.
Moreover $t_j = d_g(x_j,x)+o\bigl(d_g(x_j,x)\bigr)$ as $j\to \infty$. Therefore, up to subsequences, one easily gets (looking everything in charts)
$$
\lim_{j\rightarrow +\infty} \frac{x_j -x  }{d_g(x_j,x)} = X(x),
$$
which implies
$$
d\phi_k(x)\cdot X(x)\leq 0,
$$
as wanted.

\subsection{Proof of Theorem \ref{thmabscont}}
Let us first prove the uniqueness of the Wasserstein geodesic. A
basic representation theorem (see \cite[Corollary
7.22]{villaniSF}) states that any Wasserstein geodesic
necessarily takes the form $\mu_t=(e_t)_\# \Pi$, where $\Pi$ is a
probability measure on the set $\Gamma$ of minimizing geodesics
$[0,1]\to M$, and $e_t:\Gamma \rightarrow M$ is the evaluation at
time $t$: $e_t(\g):=\g(t)$. Thus uniqueness follows easily from
Theorem \ref{mainthm}.

The proof of the absolute continuity of $\mu_t$ is done as
follows. Fix $t \in (0,1)$, and define the functions
$$
\phi_{1-t}(x):=\inf_{y \in \supp(\nu)}  \left\{ \frac{d_{SR}^2(x,y)}{1-t} -
\phi^c(y) \right\},
$$
$$
\phi_{t}^c(y):=\inf_{x \in \supp(\mu)} \left\{ \frac{d_{SR}^2(x,y)}{t} -
\phi(x)\right\}.
$$
It is not difficult to see that
\begin{equation}
\label{triangleineq} \frac{d_{SR}^2(x,z)}{t} +
\frac{d_{SR}(z,y)^2}{1-t} \geq d_{SR}^2(x,y) \qquad \forall x,y,z
\in M.
\end{equation}
Indeed, for all $\e>0$,
$$
d_{SR}^2(x,y) \leq \bigl(d_{SR}(x,z) + d_{SR}(z,y) \bigr)^2\leq ( 1+\e )d_{SR}^2(x,z) + \Bigl( 1+\frac{1}{\e}\Bigr) d_{SR}^2(z,y).
$$
Choosing $\e>0$ so that $1+\e=1/t$, \eqref{triangleineq} follows.
Since $\phi(x)+\phi^c(y) \leq d_{SR}^2(x,y)$ for all $x
\in \supp(\mu)$ and $y\in \supp(\nu)$, by \eqref{triangleineq} we get
$$
\Bigl[\frac{d_{SR}(z,y)^2}{1-t} - \phi^c(y) \Bigr]+ \Bigl[
\frac{d_{SR}^2(x,z)}{t} - \phi(x)\Bigr] \geq 0 \qquad \forall x
\in \supp(\mu),\,y \in \supp(\nu), \,z \in M.
$$
This implies
\begin{equation}
\label{ineqsemic} \phi_{1-t}(z) + \phi^c_{t}(z) \geq 0 \qquad
\forall z \in M.
\end{equation}
We now remark that \eqref{triangleineq} becomes an equality if and
only if there exists a geodesic $\g:[0,1] \to M$ joining $x$ to
$y$ such that $z=\g(t)$. Hence by the definition of $T_t(x)$ we
get
\begin{equation}
\label{eqtransport} \frac{d_{SR}(x,T_t(x))^2}{t} +
\frac{d_{SR}(T_t(x),T(x))^2}{1-t} = d_{SR}^2(x,T(x)) \qquad
\text{for $\mu$-a.e. $x$}.
\end{equation}
Moreover, since
$$
\phi(x)+\phi^c(T(x))=d_{SR}^2(x,T(x)) \qquad \text{for $\mu$-a.e.
$x$},
$$
we obtain
$$
\phi_{1-t}(T_t(x)) + \phi^c_{t}(T_t(x)) = 0 \qquad \text{for
$\mu$-a.e. $x$},
$$
or equivalently
\begin{equation}
\label{eqsemic} \phi_{1-t}(z) + \phi^c_{t}(z) =0 \qquad \text{for
$\mu_t$-a.e. $z$}.
\end{equation}

Let us now decompose the set $\Mov^\phi\cap \supp(\mu)$ as
$$
A_k:=\{x \in \Mov^\phi\cap \supp(\mu) \mid d_{SR}(x,y)
>1/k\quad\forall y \in \p^c\phi(x)\}.
$$
Since $T_t(x)=x$ on $\Stat^\phi\cap \supp(\mu)$, defining
$\mu_t^k:=\mu_t \lfloor_{T_t(A_k)}$ we have
$$
\mu_t=\bigl(\cup_{k}\mu_t^k \bigr) \cup \mu
\lfloor_{\bigl(\Stat^\phi\cap \supp(\mu)\bigr)} \qquad \forall t
\in [0,1].
$$
Thus it suffices to prove that $\mu_t^k$ is
absolutely continuous for each $k \in \N$.

We consider the functions
$$
\phi_{k,1-t}(x):=\inf\left\{\frac{d_{SR}^2(x,y)}{1-t} - \phi^c(y)
\mid y \in \supp(\nu), \, d_{SR}(x,y) > (1-t)/k \right\}.
$$
$$
\phi_{k,t}^c(y):=\inf\left\{\frac{d_{SR}^2(x,y)}{t} - \phi(x)
\mid y \in \supp(\nu), \, d_{SR}(x,y) > t/k \right\}.
$$
Since $d_{SR}(x,T(x))>1/k$ for $x \in A_k$, they coincide
respectively with $\phi_{1-t}$ and $\phi_{t}^c$ inside $T_t(A_k)$.
Thus, thanks to \eqref{ineqsemic} and \eqref{eqsemic} we have
$$
\phi_{k,1-t}(z) + \phi_{k,t}^c(z) \geq \phi_{1-t}(z) +
\phi_{t}^c(z) \geq 0\qquad \forall z \in M,
$$
with equality $\mu_t$-a.e. on $T_t(A_k)$.

Observe now that, by the compactness of the supports of $\mu$
and $\nu$, and the fact that $\O$ is totally geodesically convex,
$\supp(\mu\times\mu_t)$ and $\supp(\mu_t\times\nu)$ are compact and contained in $\O$.
Thus, since $d_{SR}^2$ is locally semiconcave on $\O \setminus D$, both functions $\phi_{k,1-t}$ and
$\phi_{k,t}^c$ are locally semiconcave in a neighborhood of $T_t(A_k)$.
It follows from
\cite[Theorem A.19]{fatfig} that both differentials
$d\phi_{k,t}(z)$, $d\phi_{k,1-t}^c(z)$ exist and are equal
for $\mu$-a.e. $z\in T_s(A_k)$. Moreover, again by \cite[Theorem A.19]{fatfig}, the map $z\mapsto
d\phi_{k,t}(z)=d\phi_{k,1-t}^c(z)$ is locally Lipschitz on
$T_s(A_k)$. Since for  $x \in A_k$ we have
$$
\phi_{k,t}(\cdot) \leq \frac{d_{SR}(x,\cdot)^2}{t} - \phi(x)\qquad
\text{on }\{z \mid d_{SR}(x,z) >t/k\}
$$
with equality at $T_t(x)$ for $\mu$-a.e. $x \in A_k$, by
Proposition \ref{propexp1} we get
$$
x=\exp_{T_t(x)}(-\textstyle{\frac{1}{2}}d\phi_{k,t}(T_t(x))) \qquad \text{for
$\mu$-a.e. $x \in A_k$}.
$$
Denoting by $\Phi_t:T^*M \to T^*M$ the Euler-Lagrange flow
(i.e. the flow of the Hamiltonian vector field $\overrightarrow{H}$), we see
that the map
$$
F_{t,k}(z):=\exp_{z}(-\textstyle{\frac{1}{2}}d\phi_{k,t}(z))=\Phi_t(z,-\textstyle{\frac{1}{2}}d\phi_{k,t}(z))
$$
is locally Lipschitz on $\supp(\mu_t) \cap T_t(A_k)$. Therefore it
is clear that $\mu_t^k$ cannot have a singular part with respect
to the volume measure, since otherwise the same would be true for
$(F_{t,k})_\# (\mu_t^k)=\mu \lfloor_{A_k}$. This concludes the
proof of the absolute continuity.

\subsection{Proof of Theorem \ref{thmapproxdiff}}
We recall that, by Theorem \ref{mainthm}, the function $\phi$ is
locally semiconcave in a neighborhood of
$\Mov^\phi\cap\supp(\mu)$. Thus, since $\mu$ is absolutely
continuous with respect to the volume measure, by Theorem
\ref{alexandrov} $d\phi(x)$ is differentiable for $\mu$-a.e. $x\in
\Mov^\phi\cap\supp(\mu)$. By Theorem \ref{mainthm}, for $\mu$-a.e.
$x$ there exists a unique minimizing geodesic between $x$ and
$T(x)$. Thanks to our assumptions this implies that
$T(x)=\exp_x(-\frac{1}{2}d\phi(x))$ do not belongs to
$\mbox{Cut}_{SR}(x)$ for $\mu$-a.e. $x \in
\Mov^\phi\cap\supp(\mu)$. Hence Proposition \ref{cut1} implies
that the function
$$
(z,w)\mapsto d_{SR}^2(z,w)
$$
is smooth near $(x,T(x))$. Exactly as in the Riemannian case, this
gives that the map $x \mapsto \exp_x(-\frac{1}{2}d\phi(x))$ is
differentiable for $\mu$-a.e. $x$, and its differential is given
by $Y(x)\bigl(H(x)-\frac{1}{2}\Hess_x^2\phi\bigr)$ (see
\cite[Proposition 4.1]{cems01}). On the other hand, since $T(x)=x$
for $x\in \Stat^\phi\cap\supp(\mu)$, it is clear by Definition
\ref{approxdiff} that $T$ is approximately differentiable
$\mu$-a.e. in $\Stat^\phi\cap\supp(\mu)$, and that its approximate
differential is given by the identity matrix $I$. This proves the
first part of the theorem.

To prove the change of variable formula, we first remark that, since both $\mu$ and $\nu$ are
absolutely continuous, there exists also an optimal transport map
$S$ from $\nu$ to $\mu$, and it is well-known that $S$ is an inverse for $T$ a.e., that is
$$
S \circ T=Id \quad \mu\text{-a.e.}, \qquad T \circ S=Id \quad
\nu\text{-a.e.}
$$
(see for instance \cite[Remark 6.2.11]{agsbook}). This gives in particular that $T$
is a.e. injective. Applying \cite[Lemma 5.5.3]{agsbook} (whose proof is in the Euclidean case, but still works on a manifold) we deduce that
$|\det(\tilde d T(x))|>0$ $\mu$-a.e., and that the Jacobian identity
holds.

\appendix

\section{Locally semiconcave functions}

The aim of this section is to recall some basic facts on semiconcavity. Throughout this section,
$M$ denotes a smooth connected manifold of dimension $n$.

For an introduction to semiconcavity, we refer the reader to
\cite{cs04} and \cite[Appendix A]{fatfig}. A function $u:U \rightarrow \R$, defined on the open set
$U \subset M$, is called \textit{locally semiconcave} on $U$ if for every $x \in U$
there exist a neighborhood $U_x$ of $x$ and a smooth diffeomorphism
$\varphi_x: U_x \rightarrow \varphi_x(U_x) \subset \R^n$ such that
$f\circ \varphi_x^{-1}$ is locally semiconcave on the open subset $\tilde{U}_x =\varphi_x(U_x) \subset \R^n$. We recall that the function
$u: U \rightarrow \R$, defined on the open set $U \subset \R^n$, is
locally semiconcave on $U$ if  for every $\bar{x} \in U$
there exist  $C,\delta >0$ such that
\begin{equation}
    \label{sc1}
\mu u(y)+ (1-\mu) u(x) -u(\mu x +(1-\mu) y) \leq \mu (1-\mu) C |x-y|^2,
\end{equation}
for all $x,y$ in the ball $B_\delta(\bar{x})$ and every $\mu \in [0,1]$. This is equivalent
to say that the function $u$ can be written locally as
$$
u(x) = \left( u(x) - C|x|^2 \right) + C|x|^2  \qquad \forall x \in B_\delta(\bar{x}),
$$
with $u(x) - C|x|^2$ concave. Note that every locally semiconcave
function is locally Lipschitz on its domain, and thus by
Rademacher's Theorem it is differentiable almost everywhere on its
domain (in fact a better result holds, see Theorem
\ref{THMsingset}). The following result will be useful in the
proof of our theorems.

\begin{lemma}
    \label{lem0}
    Let $u:U \rightarrow \R$ be a function defined on an open set
    $U\subset \R^n$. Assume that for every $\bar{x} \in U$ there exist a
    neighborhood $\mathcal{V} \subset U$ of $\bar{x}$ and a positive real number
    $\sigma$ such that, for every  $x\in \mathcal{V}$,
    there is $p_{x} \in \R^n$ such that
    \begin{equation}
        \label{sc2}
    u(y) \leq u(x) + \langle p_{x}, y-x\rangle + \sigma |y-x|^2 \qquad \forall y \in \mathcal{V}.
    \end{equation}
    Then the function $u$ is locally
    semiconcave on $U$.
    \end{lemma}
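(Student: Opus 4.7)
The plan is to verify the semiconcavity inequality \eqref{sc1} directly from the one-sided quadratic upper bound \eqref{sc2}, by applying the bound at the convex combination point rather than at the endpoints. Concretely, given $\bar x \in U$, I would pick a convex neighborhood (say an open ball) $\mathcal{V}' \subset \mathcal{V}$ centered at $\bar x$ so that $\mathcal{V}'$ together with all segments between its points lies inside $\mathcal{V}$; this is possible since $\mathcal{V}$ is open.

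Fix $x, y \in \mathcal{V}'$ and $\mu \in [0,1]$, and set $z := \mu x + (1-\mu) y \in \mathcal{V}' \subset \mathcal{V}$. By hypothesis, there exists $p_z \in \R^n$ such that
\begin{equation*}
u(w) \leq u(z) + \langle p_z, w - z \rangle + \sigma |w - z|^2 \qquad \forall w \in \mathcal{V}.
\end{equation*}
Applying this with $w = x$ and with $w = y$, then taking the convex combination with weights $\mu$ and $1-\mu$, gives
\begin{equation*}
\mu u(x) + (1-\mu) u(y) \leq u(z) + \langle p_z, \mu(x-z) + (1-\mu)(y-z) \rangle + \sigma \bigl(\mu|x-z|^2 + (1-\mu)|y-z|^2 \bigr).
\end{equation*}
The linear term vanishes since $\mu(x-z) + (1-\mu)(y-z) = z - z = 0$, and the standard identity $x - z = (1-\mu)(x-y)$, $y - z = -\mu(x-y)$ gives $\mu|x-z|^2 + (1-\mu)|y-z|^2 = \mu(1-\mu)|x-y|^2$. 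Rearranging yields
\begin{equation*}
\mu u(x) + (1-\mu) u(y) - u(\mu x + (1-\mu) y) \leq \sigma \, \mu (1-\mu) |x-y|^2,
\end{equation*}
which is precisely \eqref{sc1} with $C = \sigma$ on the ball $\mathcal{V}'$. Since $\bar x \in U$ was arbitrary, $u$ is locally semiconcave on $U$.

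There is no real obstacle here; the only subtlety is ensuring that when one applies the upper bound \eqref{sc2}, both the reference point and the point being evaluated lie in the same neighborhood $\mathcal{V}$ on which the bound is guaranteed, which is why I shrink $\mathcal{V}$ to a convex subneighborhood $\mathcal{V}'$ at the outset. The key conceptual point is that a pointwise quadratic upper touching condition with a \emph{uniform} constant $\sigma$ (but a possibly non-uniform ``supporting slope'' $p_x$) is equivalent to semiconcavity: applying the upper bound at the midpoint cancels the linear term by convexity and leaves only the quadratic remainder, yielding the desired concavity of $u(\cdot) - \sigma |\cdot|^2$.
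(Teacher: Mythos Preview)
Your proof is correct and follows essentially the same route as the paper: apply the quadratic upper bound \eqref{sc2} at the convex combination point $z=\mu x+(1-\mu)y$, take the convex combination of the resulting inequalities at $x$ and $y$, and observe that the linear term cancels while the quadratic remainder gives $\mu(1-\mu)|x-y|^2$. Your argument is in fact slightly tidier than the paper's, which needlessly bounds the resulting constant by $2\sigma$ instead of $\sigma$; your care in passing to a convex subneighborhood $\mathcal{V}'$ is exactly what the paper achieves by assuming $\mathcal{V}$ is a ball.
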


    \begin{proof}
Let $\bar{x} \in U$ be fixed and $\mathcal{V}$ be the neighborhood given by assumption. Without loss of generality, we can assume that $\mathcal{V}$ is an open ball $\mathcal{B}$. Let $x,y \in \mathcal{B}$ and $\mu \in [0,1]$. The point
     $\hat{x}:= \mu x + (1-\mu) y$ belongs to $\mathcal{B}$. By assumption, there exists $\hat{p} \in \R^n$ such that
     $$
     u(z) \leq u(\hat{x}) + \langle \hat{p}, z-\hat{x} \rangle + \sigma |z-\hat{x}|^2 \qquad \forall z \in \mathcal{B}.
     $$
     Hence we easily get
     \begin{align*}
     \mu u(y) + (1-\mu) u(x) & \leq u(\hat{x}) + \mu \sigma |x-\hat{x}|^2 + (1-\mu) \sigma |y-\hat{x}|^2 \\
                             & \leq u(\hat{x}) +  \left( \mu (1-\mu)^2 \sigma + (1-\mu) \mu^2 \sigma \right) |x-y|^2 \\
                             & \leq u(\hat{x}) + 2 \mu (1-\mu) \sigma |x-y|^2,
        \end{align*}
        and the conclusion follows.
     \end{proof}

Another useful result is the following (see \cite[Corollary
3.3.8]{cs04}):

\begin{proposition}
\label{propSCSC}
Let $u:U \rightarrow \R$ be a function defined on an open set
    $U\subset M$. If both functions $u$ and $-u$ are locally
    semiconcave on $U$, then $u$ is of class $C^{1,1}_{loc}$ on $U$.
\end{proposition}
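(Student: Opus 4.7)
The claim is purely local and invariant under smooth changes of coordinates, so after composing with a chart around any fixed $\bar x \in U$, one may assume $U \subset \R^n$ and $\bar x \in U$. Applying the definition of local semiconcavity to both $u$ and $-u$, one finds $C,\delta>0$ such that $u-C|\cdot|^2$ is concave and $u+C|\cdot|^2$ is convex on $B_\delta(\bar x)$. My plan is then to prove that $u \in C^{1,1}(B_{\delta/2}(\bar x))$, and since $\bar x$ was arbitrary, conclude $u\in C^{1,1}_{loc}(U)$.

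\medskip
\noindent\textbf{Step 1 (two-sided quadratic estimate and differentiability).} From the concavity of $u-C|\cdot|^2$ (whose superdifferential is nonempty at every interior point), one obtains for every $x_0\in B_\delta(\bar x)$ some $p_0 \in \R^n$ (the concave superdifferential shifted by $2Cx_0$) such that
$$u(x) \leq u(x_0) + \langle p_0, x-x_0\rangle + C|x-x_0|^2 \qquad \forall\, x \text{ near } x_0.$$
Symmetrically, convexity of $u+C|\cdot|^2$ yields $q_0\in\R^n$ with
$$u(x) \geq u(x_0) + \langle q_0, x-x_0\rangle - C|x-x_0|^2 \qquad \forall\, x \text{ near } x_0.$$
Subtracting and specializing to $x = x_0 + tv$, $t\to 0^+$, forces $\langle p_0-q_0,v\rangle\geq 0$ for every $v$, so $p_0=q_0=:\nabla u(x_0)$. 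The two inequalities then combine into
$$|u(x)-u(x_0)-\langle \nabla u(x_0),x-x_0\rangle|\leq C|x-x_0|^2,$$
proving Fr\'echet differentiability at every $x_0\in B_\delta(\bar x)$.

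\medskip
\noindent\textbf{Step 2 (Lipschitz character of $\nabla u$).} Fix $x_0,x_1\in B_{\delta/2}(\bar x)$, set $\xi := \nabla u(x_1)-\nabla u(x_0)$, and let $x$ range in a joint neighborhood of $x_0,x_1$. Chaining the upper bound at $x_0$ and the lower bound at $x_1$ gives
$$\langle \xi, x\rangle \leq A(x_0,x_1) + C|x-x_0|^2 + C|x-x_1|^2,$$
where $A(x_0,x_1) = u(x_0)-u(x_1)-\langle\nabla u(x_0),x_0\rangle+\langle\nabla u(x_1),x_1\rangle$. The function of $x$ on the right minus the left is a strictly convex quadratic with minimizer $x^* = \tfrac12(x_0+x_1) + \xi/(4C)$, which lies in the joint neighborhood provided $|x_1-x_0|$ and $|\xi|$ are small enough. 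Substituting $x^*$ gives
$$\tfrac{|\xi|^2}{8C} \leq A(x_0,x_1) - \langle\xi,\tfrac12(x_0+x_1)\rangle + \tfrac{C}{2}|x_1-x_0|^2,$$
and the constant on the right is bounded, by the estimate of Step 1 applied to $u(x_1)-u(x_0)$, by $\tfrac12|\xi||x_1-x_0|+\tfrac{3C}{2}|x_1-x_0|^2$. This yields a quadratic inequality in $|\xi|/|x_1-x_0|$ whose solution produces an absolute bound $|\xi|\leq K C|x_1-x_0|$.

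\medskip
\noindent\textbf{Main obstacle.} The delicate point is Step~2: one must verify that the optimizer $x^*$ actually lies in the joint neighborhood on which both estimates are valid, which requires an a priori control on $|\xi|$. This is handled by a bootstrap: $\nabla u$ is already bounded on $B_{\delta/2}(\bar x)$ since $u$ is locally Lipschitz (both $u-C|\cdot|^2$ and $u+C|\cdot|^2$ being monotone-type functions on a convex set), so $|\xi|$ is uniformly bounded and $|\xi|/(4C)$ can be made arbitrarily small compared to $\delta$ by shrinking the working neighborhood. Once this is in place, the argument outlined above closes and shows $\nabla u$ is locally Lipschitz, completing the proof.
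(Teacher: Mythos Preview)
The paper does not actually prove this proposition; it merely cites \cite[Corollary~3.3.8]{cs04}. Your direct two-step argument is the standard one and is correct in outline: Step~1 is fine, and the optimization idea in Step~2 does yield the Lipschitz bound once the test point $x^*$ is known to lie in the domain of validity of the quadratic estimates.

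The only genuine gap is the bootstrap justification at the end. You write that ``$|\xi|/(4C)$ can be made arbitrarily small compared to $\delta$ by shrinking the working neighborhood'', but shrinking the set of admissible $x_0,x_1$ does not make $|\xi|=|\nabla u(x_1)-\nabla u(x_0)|$ small until you already know $\nabla u$ is continuous (indeed Lipschitz), which is precisely what you are trying to prove; the argument is circular as written. Two clean fixes are available. First, since $u-C|\cdot|^2$ concave implies $u-C'|\cdot|^2$ concave for every $C'\geq C$ (and likewise on the convex side), you may simply enlarge $C$ at the outset so that $L/(2C)<\delta/2$, where $L$ is the Lipschitz constant of $u$ on $\overline{B_{\delta/2}(\bar x)}$; then $x^*=\tfrac12(x_0+x_1)+\xi/(4C)$ automatically lies in $B_\delta(\bar x)$ and your minimization is legitimate. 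Second, and more directly, instead of inserting the global minimizer $x^*$ you can evaluate the chained inequality at $x=x_0+|x_1-x_0|\,\hat\xi$ with $\hat\xi=\xi/|\xi|$; this point lies in $B_\delta(\bar x)$ whenever $x_0,x_1\in B_{\delta/4}(\bar x)$, and a short computation gives $|\xi|\leq 6C|x_1-x_0|$ without any a priori control on $|\xi|$. Either repair closes the argument.
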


Fathi generalized the proposition above as follows (see \cite{fathibook} or \cite[Theorem A.19]{fatfig}):

\begin{proposition}
\label{propFATHI}
Let $U$ be an open subset of $M$ and $u_1,u_2: U \rightarrow \R$ be two functions with $u_1$ and $-u_2$ locally semiconcave on $U$.  Assume that $u_1 (x)\leq u_2 (x)$ for any $x\in U$. If we define $\mathcal{E}= \left\{x\in U \ \vert \ u_1(x) =u_2(x)\right\}$, then both $u_1$ and $u_2$ are differentiable at each $x\in \mathcal{E}$ with $du_1(x)=du_2(x)$ at such a point. Moreover, the map $x\mapsto du_1(x)=du_2(x)$ is locally Lipschitz on $\mathcal{E}$.
\end{proposition}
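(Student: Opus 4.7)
The plan is to fix $x_0 \in \mathcal{E}$, work in a local chart, and study the difference $f := u_2 - u_1$ (choosing signs so that it is a sum of two locally semiconcave functions, hence itself locally semiconcave near $x_0$). The inequality $u_1 \leq u_2$ combined with $u_1(x_0) = u_2(x_0)$ forces $f \geq 0$ with $f(x_0) = 0$, so $x_0$ is a local minimum of the semiconcave function $f$. First I will show that $f$ is differentiable at $x_0$ with $df(x_0) = 0$: for any $p \in \partial^+ f(x_0)$ (nonempty by semiconcavity), the bound $f(y) \leq \langle p, y-x_0\rangle + C|y-x_0|^2$ (with $C$ the local semiconcavity constant) combined with $f \geq 0$, tested at $y = x_0 - tp$ for $0 < t < 1/C$, forces $p = 0$. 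Hence $\partial^+ f(x_0) = \{0\}$, which for a semiconcave function is equivalent to Fr\'echet differentiability.

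Next I will recover the differentiability of $u_1$ and $u_2$ separately. From $f(y) = o(|y-x_0|)$, plugging $y = x_0 + tv$ and letting $t \to 0^+$ gives
\[
    u_1'(x_0;v) = u_2'(x_0;v) \qquad \text{for every } v,
\]
where the one-sided directional derivatives exist by the basic theory of semiconcave and semiconvex functions. But $v \mapsto u_2'(x_0;v) = \min_{p \in \partial^+ u_2(x_0)} \langle p, v\rangle$ is concave in $v$, while $v \mapsto u_1'(x_0;v) = \max_{q \in \partial^- u_1(x_0)} \langle q, v\rangle$ is convex in $v$. A function that is both concave and convex is linear, so there exists $p_0$ with $u_1'(x_0;v) = u_2'(x_0;v) = \langle p_0, v\rangle$ for all $v$. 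Standard support-function considerations then force $\partial^+ u_2(x_0) = \partial^- u_1(x_0) = \{p_0\}$, which is the Fr\'echet differentiability of both $u_1$ and $u_2$ at $x_0$ with common differential $p_0$.

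For the local Lipschitz regularity of $p(x) := du_1(x) = du_2(x)$ on $\mathcal{E}$, I will restrict to a compact neighborhood in which the semiconcavity/semiconvexity constants are uniformly $C$. For $x, x' \in \mathcal{E}$ in this neighborhood, the basic input is the sandwich
\[
    u_1(x') + \langle p(x'), y-x'\rangle - C|y-x'|^2 \leq u_1(y) \leq u_2(y) \leq u_2(x) + \langle p(x), y-x\rangle + C|y-x|^2.
\]
Specializing to $y = x$ and $y = x'$, together with $u_1(x)=u_2(x)$ and $u_1(x')=u_2(x')$, yields the monotonicity bound $|\langle p(x')-p(x), x'-x\rangle| \leq 2C|x-x'|^2$. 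For a general point $y = x+h$ the same sandwich rearranges to
\[
    \langle p(x')-p(x), h\rangle \leq K\bigl(|x-x'|^2 + |h|^2\bigr)
\]
for some $K$ depending only on $C$. I expect the hardest step to be this last one: by choosing $h$ parallel to $p(x')-p(x)$ at the optimal scale $|h| \asymp |p(x')-p(x)|$ and optimizing, one upgrades this inequality (which, together with the crude monotonicity, controls only the projection along $x'-x$) to the full norm bound $|p(x')-p(x)| \leq K'|x-x'|$, as required.
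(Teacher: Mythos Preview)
The paper does not prove this proposition; it merely quotes it and cites Fathi's book and \cite[Theorem A.19]{fatfig}. So there is nothing to compare against, but let me comment on your argument itself.

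There is a sign problem, and it originates in the paper: as stated (with $u_1$ and $-u_2$ semiconcave and $u_1\le u_2$) the proposition is false. Take $u_1(x)=-|x|$ and $u_2(x)=|x|$ on $\R$: both $u_1$ and $-u_2$ are concave, $u_1\le u_2$, $\mathcal E=\{0\}$, yet neither function is differentiable at $0$. The correct hypothesis is $u_1\ge u_2$ (the semiconcave function sits \emph{above} the semiconvex one), which is exactly how the result is applied in the proof of Theorem~\ref{thmabscont}: there one has $\phi_{k,1-t}+\phi_{k,t}^c\ge 0$ with both summands semiconcave, i.e.\ $u_1:=\phi_{k,1-t}\ge -\phi_{k,t}^c=:u_2$.

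This typo contaminates your write-up. Under the stated hypotheses, $f:=u_2-u_1=u_2+(-u_1)$ is a sum of two \emph{semiconvex} functions (both $u_2$ and $-u_1$ are semiconvex), hence semiconvex, not semiconcave; and a semiconvex function need not be differentiable at a minimum ($|x|$ at $0$ again). Your key step ``$\partial^+f(x_0)=\{0\}$'' genuinely needs $f$ to be semiconcave, nonnegative, and vanishing at $x_0$; that is precisely what one obtains for $f:=u_1-u_2$ once the hypothesis is corrected to $u_1\ge u_2$. The same swap fixes the later slips: for semiconcave $u_1$ the directional derivative $v\mapsto u_1'(x_0;v)=\min_{p\in\partial^+u_1(x_0)}\langle p,v\rangle$ is \emph{concave}, for semiconvex $u_2$ it is \emph{convex}; and the sandwich in your Lipschitz step should read
\[
u_2(x')+\langle p(x'),y-x'\rangle-C|y-x'|^2 \le u_2(y)\le u_1(y)\le u_1(x)+\langle p(x),y-x\rangle+C|y-x|^2.
\]

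With these signs corrected your three steps are sound and coincide with the standard proof. One small shortcut: the sandwich above, applied at a single point $x=x'=x_0\in\mathcal E$, already gives $\langle q-p,y-x_0\rangle\le 2C|y-x_0|^2$ for every $p\in\partial^+u_1(x_0)$ and $q\in\partial^-u_2(x_0)$, forcing $p=q$; so differentiability follows directly without the detour through $f$ and directional derivatives. Your final optimisation for the Lipschitz bound is correct (taking $h$ parallel to $p(x')-p(x)$ with $|h|\sim|x-x'|$ yields $|p(x')-p(x)|\le 2K|x-x'|$).
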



\subsection{Singular sets of semiconcave functions}

Let $u:U\rightarrow \R$ be a function which is locally semiconcave on
the open set $U\subset M$. We recall that, since such a function is locally Lipschitz on $U$, its limiting subdifferential is always nonempty on $U$. We define the \textit{singular set}
of $u$ as the subset of $U$
$$
\Sigma(u) := \left\{ x\in U \mid u \mbox{ is not differentiable at } x \right\}.
$$
From Rademacher's theorem, $\Sigma (u)$ has Lebesgue measure zero. In fact, the following result holds (see \cite{cs04,riffordbook}):

\begin{theorem}
\label{THMsingset} Let $U$ be an open subset of $M$. The singular
set of a locally semiconcave function $u:U \rightarrow \R$ is
countably $(n-1)$-rectifiable, i.e. is contained in a countable
union of locally Lipschitz hypersurfaces of $M$.
\end{theorem}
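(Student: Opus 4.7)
The plan is to reduce to the Euclidean case and then exploit the local representation of a semiconcave function as a smooth perturbation of a concave function. Since the statement is local and $M$ is a smooth manifold, I would cover $U$ by countably many coordinate charts $(\mathcal{V}_\alpha, \varphi_\alpha)$ and further decompose each $\varphi_\alpha(\mathcal{V}_\alpha)$ into countably many balls $B_\beta$ on which $u \circ \varphi_\alpha^{-1}(y) = w_\beta(y) + C_\beta |y|^2$ with $w_\beta$ concave on $B_\beta$. As smooth diffeomorphisms preserve countably $(n-1)$-rectifiable sets, it suffices to prove the claim for a concave (equivalently, the negative of a convex) function on an open subset of $\R^n$, and then patch the countably many pieces together.

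Next I would stratify the non-differentiability set of the convex function $f := -w_\beta$ according to the dimension of its (classical) subdifferential: for $k = 1, \ldots, n$ set
$$
\Sigma^k(f) := \{ x \in B_\beta \mid \dim \partial f(x) = k \},
$$
so that $\Sigma(f) = \bigcup_{k=1}^n \Sigma^k(f)$, using the fact that $f$ is differentiable at $x$ iff $\partial f(x)$ is a singleton. The goal is then to show that each $\Sigma^k(f)$ is countably $(n-k)$-rectifiable, which in particular implies that the full singular set is countably $(n-1)$-rectifiable.

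The key mechanism is a double-cone test of the type used in the proof of Theorem \ref{thmrectif} (compare \cite[Theorem 4.1.6]{cs04}). Fix $x \in \Sigma^k(f)$ and choose $k+1$ affinely independent subgradients $p_0, \ldots, p_k \in \partial f(x)$. The convexity inequalities $f(y) \geq f(x) + \langle p_i, y-x \rangle$ for $i = 0, \ldots, k$, together with the analogous inequalities at any nearby $x' \in \Sigma^k(f)$ whose subdifferential contains subgradients close in Hausdorff distance to the chosen $p_i$'s, force the difference $x' - x$ to lie in a cone transversal to the $k$-dimensional affine space $\mathrm{aff}\{p_0, \ldots, p_k\} - p_0$; that is, in a cone of dimension $n - k$. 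Quantifying this transversality rationally (fixing rational approximations of the $p_i$, rational bounds on the opening and width of the cone, and rational radii around $x$) decomposes $\Sigma^k(f)$ into countably many Borel subsets, each satisfying a uniform double-cone condition, and a direct application of \cite[Theorem 4.1.6]{cs04} gives that each piece is countably $(n-k)$-rectifiable.

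The main obstacle is the geometric argument just described: one must carefully convert convex-analytic inequalities for the subgradients $p_0,\ldots,p_k$ into a quantitative, uniform transversal double-cone for a whole measurable piece of $\Sigma^k(f)$, since the subdifferential is only upper semicontinuous and its dimension can jump downwards. Once this quantification is achieved and combined with the countable covering by rational parameters, the stratum-by-stratum rectifiability follows, and patching through the charts and balls $B_\beta$ recovers the statement on $M$.
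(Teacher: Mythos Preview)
The paper does not actually supply a proof of Theorem \ref{THMsingset}: it is stated in Appendix A with only a reference to \cite{cs04,riffordbook}, so there is no in-paper argument to compare against. Your outline is essentially the classical proof one finds in those references (in particular in Cannarsa--Sinestrari \cite{cs04}): reduce to $\R^n$ via charts, subtract the quadratic to pass to a convex/concave function, stratify the singular set by the dimension of the subdifferential, and then show each stratum $\Sigma^k$ is countably $(n-k)$-rectifiable via a cone criterion. So, modulo the paper's choice to cite rather than prove, your approach matches the intended one.

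One remark on the ``main obstacle'' you flag. The cleanest way to obtain the uniform double-cone condition is not to play the inequalities $f(y)\geq f(x)+\langle p_i,y-x\rangle$ against each other directly, but to use the \emph{monotonicity} of the subdifferential of a convex function: for $p\in\partial f(x)$ and $q\in\partial f(x')$ one has $\langle p-q,\,x-x'\rangle\geq 0$. If $\partial f(x)$ contains a $k$-dimensional simplex with vertices $p_0,\ldots,p_k$, and $x'$ is a nearby point whose subdifferential also contains a $k$-simplex close (in Hausdorff distance) to the first, monotonicity applied pairwise forces $x'-x$ to be almost orthogonal to the $k$-plane $\mathrm{span}\{p_i-p_0\}$; this is exactly the transversal double-cone you want, and the rational quantification then goes through. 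With this said, your sketch is correct and complete in spirit.
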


\subsection{Alexandrov's second differentiability theorem}

As shown by Alexandrov (see \cite{villaniSF}), locally semiconcave functions are two times differentiable almost everywhere.

\begin{theorem}
\label{alexandrov}
Let $U$ be an open subset of $\R^n$ and $u:U \rightarrow \R$ be a function which is locally semiconcave on $U$. Then, for a.e. $x\in U$, $u$ is differentiable at $x$ and there exists a symmetric operator $A(x):\R^n \rightarrow \R^n$  such that the following property is satisfied:
$$
\lim_{t\downarrow 0} \frac{u(x+tv) -u(x) -t du(x)\cdot v - \frac{t^2}{2} \langle A(x)\cdot v,v\rangle }{t^2}=0 \qquad \forall v \in \R^n.
$$
Moreover, $du(x)$ is differentiable a.e. in $U$, and its differential is given by $A(x)$.
\end{theorem}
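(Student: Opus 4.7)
The plan is to localize, reduce to the concave case, and exploit the $BV$ regularity of $\nabla u$. Near any $\bar x \in U$, the semiconcavity estimate \eqref{sc1} lets us write $u(y) = v(y) + C|y|^2$ on a ball $B_\delta(\bar x)$, where $v$ is concave. Since $C|y|^2$ is smooth and contributes $2C\,I$ to the Hessian at every point, it suffices to prove the theorem assuming $u$ itself is concave on a convex open set $V \subset U$.

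Next I would show that $\nabla u \in BV_{\mathrm{loc}}(V;\R^n)$. Concavity gives $\partial_i^2 u \leq 0$ in the distributional sense, so each $\partial_i^2 u$ is a non-positive Radon measure on $V$. A standard lemma from convex analysis then forces every mixed second distributional derivative $\partial_i\partial_j u$ to be a signed Radon measure of locally finite total variation on $V$. Hence $D^2 u$ is a symmetric matrix-valued Radon measure (symmetry coming from Schwarz's theorem for distributions). Applying the Lebesgue--Radon--Nikodym decomposition,
$$
D^2 u \;=\; A(x)\,\Leb{n}\lfloor_V + (D^2 u)_s ,
$$
with $A \in L^1_{\mathrm{loc}}(V;\R^{n\times n}_{\mathrm{sym}})$ and $(D^2 u)_s \perp \Leb{n}$. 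Combining the Besicovitch differentiation theorem with Rademacher's theorem for $u$, for a.e.\ $x \in V$ one has simultaneously that $x$ is a Lebesgue point of $A$, that $|(D^2 u)_s|(B_r(x)) = o(r^n)$ as $r \downarrow 0$, and that $u$ is differentiable at $x$ with gradient $du(x)$. Let $G$ denote the set of such good points.

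The core step is to verify both conclusions of the theorem at each $x \in G$. Writing $\nabla u$ as a distributional primitive of $D^2 u$ and using the Lebesgue-point property at $x$, one obtains the $L^1$-type expansion
$$
\frac{1}{|B_r(x)|}\int_{B_r(x)}\bigl|\nabla u(y)-du(x)-A(x)(y-x)\bigr|\,dy \;=\; o(r) \qquad (r \downarrow 0),
$$
so $\nabla u$ is approximately differentiable at $x$ with symmetric differential $A(x)$. To upgrade this to genuine pointwise differentiability of $\nabla u$ at $x$, one invokes the monotonicity of $-\nabla u$ coming from concavity: it allows one to sandwich pointwise values of $\nabla u(y)$ between spherical averages on small balls, which converge by the displayed estimate. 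Once pointwise differentiability of $\nabla u$ at $x$ is established, a one-dimensional integration
$$
u(x+tv) - u(x) - t\,du(x)\cdot v \;=\; \int_0^t \bigl(\nabla u(x+sv) - du(x)\bigr)\cdot v\,ds ,
$$
combined with $\nabla u(x+sv) = du(x) + sA(x)v + o(s)$ and the Fubini-type fact that a.e.\ line through $x$ sees only good points, produces the second-order Taylor formula with remainder $o(t^2)$, uniform in $|v|\le 1$.

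The main obstacle will be precisely the upgrade from approximate to pointwise differentiability of $\nabla u$. This is where concavity (rather than mere $BV$ regularity) is indispensable: the monotonicity of $-\nabla u$ is exactly what permits one to pass from integral averages to pointwise values. Without this extra structure one would only obtain approximate second-order differentiability, which would not suffice to formulate the pointwise Taylor expansion stated in the theorem.
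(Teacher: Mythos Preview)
The paper does not supply its own proof of this theorem; it is stated as a classical result with a bare reference to \cite{villaniSF}. So there is nothing to compare your argument against on the paper's side.

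Your sketch follows one of the standard routes to Alexandrov's theorem, via the $BV$ structure of $\nabla u$ and the monotonicity of the (super)gradient, in the spirit of Evans--Gariepy or Alberti--Ambrosio \cite{aa99}. The reduction to the concave case, the identification of $A$ as the absolutely continuous density of the measure $D^2u$, and the recognition that monotonicity is what upgrades approximate to genuine differentiability of $\nabla u$ are all correct. One point to tighten in the last step: since $\nabla u$ exists only a.e., the line integral $\int_0^t (\nabla u(x+sv)-du(x))\cdot v\,ds$ is not a priori defined for every $v$; you should either replace $\nabla u\cdot v$ by the one-sided directional derivative (which exists everywhere for concave $u$) or, more cleanly, note that the monotonicity argument actually pins down the whole superdifferential $\partial u(y)$ to within $o(|y-x|)$ of $du(x)+A(x)(y-x)$, which then controls the integrand for \emph{every} direction and yields the uniform second-order expansion. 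With that adjustment your outline is sound and indeed proves the slightly stronger uniform version, from which the directional statement in the theorem follows immediately.
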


\section{Proofs of auxiliary results}

\subsection{Proof of Proposition \ref{propdim3}}

The first part of the proposition is just a corollary of Proposition \ref{propcodim1} for $n=3$. Let us prove the second part of the proposition. Let $\gamma:[0,1] \rightarrow M$ be a nontrivial singular horizontal path. Our aim is to show that, for every $t\in [0,1]$, the point $\gamma(t)$ belongs to $\Sigma_{\Delta}$. Fix $\bar{t}\in [0,1]$ and parametrize the distribution by two smooth vector fields $f_1,f_2$ in an open neighborhood $\mathcal{V}$ of $\gamma(\bar{t})$. Let $u\in L^2([0,1],\R^2)$, and let $I$ be an open subinterval of $[0,1]$ containing $\bar{t}$ such that
$$
\dot{\gamma}(t) =  u_1(t) f_1(\gamma(t)) + u_2(t) f_2(\gamma(t)) \qquad \mbox{for a.e. } t\in I.
$$
Note that since $\gamma$ is assumed to be nontrivial, we can
assume that $u $ is not identically zero in any neighborhood of
$\bar{t}$. From Proposition \ref{propsing2} there is an arc
$p:[0,1] \longrightarrow (\R^3)^* \setminus \{0\}$ in $W^{1,2}$
such that
$$
\dot{p}(t)  = - u_1(t) p(t) \cdot df_1(\gamma(t)) -u_2(t) p(t)
\cdot df_2(\gamma(t)) \qquad \mbox{for a.e. } t\in I,
$$
and
$$
p(t)\cdot f_1(\gamma(t)) =  p(t)\cdot f_2(\gamma(t))=  0 \qquad \forall t \in I.
$$
Let us take the derivative of the quantity $p(t)\cdot f_1(\gamma(t))$ (which is absolutely continuous). We have for almost every $t\in I$,
\begin{align*}
0 & = \frac{d}{dt} \left[ p(t)\cdot f_1(\gamma(t))\right] \\
& =  \dot{p}(t)\cdot f_1(\gamma(t)) + p(t)\cdot df_1(\gamma(t))\cdot \dot{\gamma}(t) \\
& =  - \sum_{i=1,2} u_i(t) p(t) \cdot df_i(\gamma(t)) \cdot f_1(\gamma(t))  + \sum_{i=1,2} u_i(t) p(t) \cdot df_1(\gamma(t)) \cdot f_i(\gamma(t)) \\
& =  -u_2(t) p(t)\cdot [f_1,f_2](\gamma(t)).
\end{align*}
In the same way, if we differentiate the quantity  $p(t)\cdot f_2(\gamma(t))$, we obtain
$$
0=\frac{d}{dt} \left[ p(t)\cdot f_2(\gamma(t))\right] = u_1(t) \cdot [f_1,f_2](\gamma(t)).
$$
Therefore, since $u$ is not identically zero in any neighborhood
of $\bar{t}$, thanks to the continuity of the mapping $t\mapsto
p(t)\cdot[f_1,f_2](\gamma(t))$ we deduce that
$$
p(\bar{t})\cdot [f_1,f_2](\gamma(\bar{t}))=0.
$$
But we already know that $ p(t)\cdot f_1(\gamma(\bar{t})) =
p(t)\cdot f_2(\gamma(\bar{t})) =0$, where the two vectors
$f_1(\gamma(\bar{t})), f_2(\gamma(\bar{t}))$ are linearly
independent. Therefore, since $p(\bar{t})\neq 0$,  we conclude
that the Lie bracket  $[f_1,f_2](\gamma(\bar{t}))$ belongs to the
linear subspace spanned by   $f_1(\gamma(\bar{t})),
f_2(\gamma(\bar{t}))$, which means that $\gamma(\bar{t})$ belongs
to $\Sigma_{\Delta}$. Let us now prove that any horizontal path
included in $\Sigma_{\Delta}$ is singular. Let $\gamma$ such a
path be fixed, set $\gamma(0)=x$, and consider a parametrization
of $\Delta$ by two vector fields $f_1,f_2$ in a neighborhood
$\mathcal{V}$ of $x$. Let $\delta>0$ be small enough so that
$\gamma(t) \in \mathcal{V}$ for any $t\in [0,\delta]$, in such a
way that there is $u\in L^2([0,\delta],\R^2)$ satisfying
$$
\dot{\gamma}(t) =  u_1(t) f_1(\gamma(t)) + u_2(t) f_2(\gamma(t)) \qquad \mbox{for a.e. } t\in [0,\delta].
$$
Let $p_0\in (\R^3)^*$ be such that $p_0\cdot f_1(x) =p_0\cdot
f_2(x)=0$, and let $p:[0,\delta] \rightarrow (\R^3)^*$ be the
solution to the Cauchy problem
$$
\dot{p}(t) = - \sum_{i=1,2} u_i(t) p(t)\cdot df_i(\gamma(t)) \qquad \mbox{for a.e. } t\in [0,\delta], \quad p(0)=p_0.
$$
Define two absolutely continuous function $h_1,h_2 :[0,\delta] \rightarrow \R$ by
$$
h_i(t) = p(t)\cdot f_i(\gamma(t)) \qquad \forall t\in [0,\delta], \quad \forall i=1,2.
$$
As above, for every $t\in [0,\delta]$ we have
$$
\dot{h}_1(t) = \frac{d}{dt} \left[p(t)\cdot f_1(\gamma(t))\right] = -u_2(t) p(t)\cdot [f_1,f_2](\gamma(t))
$$
and
$$
\dot{h}_2 (t) = u_1(t) p(t)\cdot [f_1,f_2](\gamma(t)).
$$
But since $\gamma(t)\in \Sigma_{\Delta}$ for every $t$, there are two continuous functions $\lambda_1,\lambda_2:[0,\delta] \rightarrow \R$ such that
$$
[f_1,f_2](\gamma(t)) = \lambda_1(t) f_1(\gamma(t)) + \lambda_2 (t)
f_2(\gamma(t)) \qquad \forall t\in [0,\delta].
$$
This implies that the pair $(h_1,h_2)$ is a solution of the linear differential system
$$
\left\{
\begin{array}{rcl}
\dot{h}_1(t) & = & -u_2(t)\lambda_1(t) h_1(t) -u_2(t)\lambda_2(t) h_2(t) \\
\dot{h}_2(t) & = & u_1(t) \lambda_1(t) h_1(t) +u_1(t) \lambda_2(t) h_2(t).
\end{array}
\right.
$$
Since $h_1(0)=h_2(0)=0$ by construction, we deduce by the
Cauchy-Lipschitz Theorem that $h_1(t)=h_2(t)=0$ for any $t\in
[0,\delta]$. In that way, we have constructed an abnormal lift of
$\gamma$ on the interval $[0,\delta]$. We can in fact repeat this
construction on a new interval of the form $[\delta,2\delta]$
(with initial condition $p(\delta)$)  and finally obtain an
abnormal lift of $\gamma$ on $[0,1]$.  By Proposition
\ref{propsing1}, we conclude that $\gamma$ is singular.

\subsection{Proof of Proposition \ref{propcodim1}}

The fact that $\Sigma_{\Delta}$ is a closed subset of $M$ is obvious. Let us prove that it is countably $(n-1)$-rectifiable.
Since it suffices to prove the result locally, we can assume that we have
$$
\Delta (x) = \SPAN \{f_1(x),\ldots ,f_{n-1}(x)\} \qquad \forall x \in \mathcal{V},
$$
where $\mathcal{V}$ is an open neighborhood of the origin in $\R^n$.  Moreover, doing a change of coordinates if necessary, we can also assume that
$$
f_i = \frac{\partial}{\partial x_i} + \alpha_i (x) \frac{\partial}{\partial x_n} \qquad \forall i=1,\ldots ,n-1,
$$
where each $\alpha_i :\mathcal{V} \longrightarrow \R$ is a
$C^\infty$ function satisfying $\alpha_i (0)=0$. Hence for any
$i,j \in \{1,\ldots n-1\}$ we have
$$
[f_i,f_j] = \left[ \left( \frac{\partial \alpha_j}{\partial x_i} -  \frac{\partial \alpha_i}{\partial x_j}\right) + \left( \frac{\partial \alpha_j}{\partial x_n} \alpha_i - \frac{\partial \alpha_i}{\partial x_n} \alpha_j \right) \right] \frac{\partial}{\partial x_n},
$$
and so
$$
\Sigma_{\Delta} = \left\{ x \in \mathcal{V} \mid
\left( \frac{\partial \alpha_j}{\partial x_i} -  \frac{\partial \alpha_i}{\partial x_j}\right) +
\left( \frac{\partial \alpha_j}{\partial x_n}\alpha_i - \frac{\partial \alpha_i}{\partial x_n} \alpha_j \right) =0\quad \forall i,j \in \{1,\ldots, n-1\} \right\}.
$$
For every tuple $I=(i_1,\ldots ,i_k) \in \{1,\ldots ,n-1\}^k$ we denote by
$f_I$ the $C^\infty$ vector field constructed by Lie brackets of
$f_1, f_2, \ldots ,f_{n-1}$ as follows,
$$
f_I = [f_{i_1},[f_{i_2},\ldots ,[f_{i_{k-1}},f_{i_k}]\ldots ]].
$$
We call $k=\mbox{length}(I)$ the length of the Lie bracket $f_I$. Since $\Delta$ is nonholonomic, there is some positive integer $r$ such that
$$
\R^n = \SPAN \left\{f_I (x) \mid \mbox{length} (I) \leq r \right\}\qquad \forall x\in \mathcal{V}.
$$
It is easy to see that, for every $I$ such that $\mbox{length}(I)
\geq 2$, there is a $C^\infty$ function $g_I :\mathcal{V}
\rightarrow \R$ such that
$$
f_I(x) =  g_I(x)  \frac{\partial}{\partial x_n} \qquad \forall x \in \mathcal{V}.
$$
Defining the sets $A_k$ as
$$
A_k :=\left\{ x\in \mathcal{V} \mid g_I(x)=0\quad \forall I \mbox{ such that length}(I) \leq k \right\},
$$
we have
$$
\Sigma_{\Delta} = \bigcup_{k=2}^r \left( A_k \setminus
A_{k+1}\right).
$$
We now observe that, thanks to the Implicit Function Theorem,
each set $A^k\setminus A^{k+1}$ can be covered by a countable
union of smooth hypersurfaces. Indeed assume that some given $x$
belongs to $A_k \setminus A_{k+1}$. This implies that there is
some $J=(j_1,\ldots ,j_{k+1})$ of length $k+1$ such that $g_J (x)
\neq 0$. Set $I=(j_2,\ldots ,j_{k+1})$. Since $g_I(x)=0$, we have
$$
g_J(x) = \left( \frac{\partial g_I }{\partial x_{j_1}}(x) + \frac{\partial g_I}{\partial x_n} (x) \alpha_{j_1} (x) \right) \frac{\partial}{\partial x_n}\neq 0.
$$
Hence, either $\frac{\partial g_I }{\partial x_{j_1}}(x) \neq 0$ or $ \frac{\partial{g_I}}{\partial x_n} (x) \neq 0$.\\
Consequently, we deduce that we have the following inclusion
$$
A^k\setminus A^{k+1} \subset \bigcup_{\mbox{length}(I)=k} \left\{
x\in \mathcal{V} \mid \exists \,i\in \{1,\ldots ,n\} \mbox{ such that
}\frac{\partial g_I }{\partial x_i}(x) \neq 0 \right\}.
$$
We conclude easily. Finally, the fact that any Goh path is
contained in $\Sigma_{\Delta}$ is obvious.

\end{document}